\documentclass{amsart}

\usepackage[
    babel={
        ngerman,main=UKenglish,shorthands=off
    },
]{preamble}

\usepackage{subcaption}

\hypersetup{%
    pdfauthor={Jannis Weis},%
    pdftitle={Polynomial filling functions of sublevel sets in CAT(0) cube complexes},%
    linktoc=all,
    hidelinks,
}
\makeTodoCommand[red]{comjw}{Jannis}

\colorlet{Red}{T-Q-V5}%
\colorlet{light Red}{T-Q-L7}%
\colorlet{Blue}{OI5}%
\colorlet{light Blue}{T-Q-L1}%
\colorlet{Green}{OI3}%
\colorlet{Yellow}{T-Q-L6}%

\usetikzlibrary{perspective,calc,arrows.meta,decorations.markings}

\makeatletter
\tikzset{
    nomorepostaction/.code={
        \let\tikz@postactions\pgfutil@empty
    },
    descending edge/.style={
        postaction={
            nomorepostaction,
            decorate,
            decoration={
                markings,
                mark=at position #1 with {
                    \arrow{Stealth[length=4pt,width=3.5pt]}
                }
            }
        }
    },
    descending edge/.default=0.5
}
\makeatother

\pgfkeys{
    /cube coordinates/.is family,
    /cube coordinates,
    prefix/.store in=\cubeCoordinatePrefix,
    x/.store in=\cubeCoordinateXOffset,
    y/.store in=\cubeCoordinateYOffset,
    z/.store in=\cubeCoordinateZOffset,
}

\NewDocumentCommand{\cubeCoordinates}{O{}}{%
    \pgfkeys{/cube coordinates/.cd,prefix={},x=0,y=0,z=0,#1}%
    \path (tpp cs:x={\cubeCoordinateXOffset},y={\cubeCoordinateYOffset},z={\cubeCoordinateZOffset}) coordinate (\cubeCoordinatePrefix O); %
    \path (tpp cs:x={\cubeCoordinateXOffset},y={\cubeCoordinateYOffset},z={\cubeCoordinateZOffset+\LZ}) coordinate (\cubeCoordinatePrefix BC); %
    \path (tpp cs:x={\cubeCoordinateXOffset},y={\cubeCoordinateYOffset+\LY},z={\cubeCoordinateZOffset}) coordinate (\cubeCoordinatePrefix AC); %
    \path (tpp cs:x={\cubeCoordinateXOffset+\LX},y={\cubeCoordinateYOffset},z={\cubeCoordinateZOffset}) coordinate (\cubeCoordinatePrefix AB); %
    \path (tpp cs:x={\cubeCoordinateXOffset},y={\cubeCoordinateYOffset+\LY},z={\cubeCoordinateZOffset+\LZ}) coordinate (\cubeCoordinatePrefix C); %
    \path (tpp cs:x={\cubeCoordinateXOffset+\LX},y={\cubeCoordinateYOffset},z={\cubeCoordinateZOffset+\LZ}) coordinate (\cubeCoordinatePrefix B); %
    \path (tpp cs:x={\cubeCoordinateXOffset+\LX},y={\cubeCoordinateYOffset+\LY},z={\cubeCoordinateZOffset}) coordinate (\cubeCoordinatePrefix A); %
    \path (tpp cs:x={\cubeCoordinateXOffset+\LX},y={\cubeCoordinateYOffset+\LY},z={\cubeCoordinateZOffset+\LZ}) coordinate (\cubeCoordinatePrefix V); %
}

\pgfkeys{
    /draw cube/.is family,
    /draw cube,
    prefix/.store in=\drawCubePrefix,
}

\NewDocumentCommand{\drawCube}{O{}}{%
    \pgfkeys{/draw cube/.cd,prefix={},#1}%
    \begin{scope}[thick,solid,every path/.append style={descending edge}]
        \draw (\drawCubePrefix BC)--(\drawCubePrefix O);
        \draw (\drawCubePrefix B)--(\drawCubePrefix BC);
        \draw (\drawCubePrefix C)--(\drawCubePrefix BC);
        \draw (\drawCubePrefix AB)--(\drawCubePrefix O);
        \draw (\drawCubePrefix A)--(\drawCubePrefix AB);
        \draw (\drawCubePrefix A)--(\drawCubePrefix AC);
        \draw (\drawCubePrefix AC)--(\drawCubePrefix O);
        \draw (\drawCubePrefix B)--(\drawCubePrefix AB);
        \draw (\drawCubePrefix C)--(\drawCubePrefix AC);
        \draw (\drawCubePrefix V)--(\drawCubePrefix A);
        \draw (\drawCubePrefix V)--(\drawCubePrefix B);
        \draw (\drawCubePrefix V)--(\drawCubePrefix C);
    \end{scope}
}

\title[Polynomial fillings of sublevel sets]{%
    Polynomial filling functions of sublevel sets \mbox{in \CAT{0} cube complexes}%
}

\author{Jannis Weis}
\address{\foreignlanguage{ngerman}{Fakultät für Mathematik, Karlsruher Institut für Technologie}, 76131 Karlsruhe, Germany}
\email{jannis.weis@kit.edu}

\date{5th October 2026}

\keywords{Dehn functions, \CAT{0} cube complexes, Morse theory, Higman--Thompson groups}
\subjclass[2020]{Primary 20F65; Secondary 20J05, 20F69}

\newcommand{\CAT}[1]{\ensuremath{\operatorname{CAT}(#1)}}
\newcommand{\Dehn}{\delta}
\newcommand{\symdiff}{\mathbin{\triangle}}

\newcommand{\augm}[1]{\widetilde{#1}}
\newcommand{\suspension}[1]{\Sigma #1}
\newcommand{\simplink}{\operatorname{Lk}}
\newcommand{\lkdown}{\operatorname{Lk}^{\downarrow}}
\newcommand{\simpstar}{\operatorname{St}}
\NewDocumentCommand{\conemap}{m o}{\mathrm{cone}_{#1\IfValueT{#2}{,#2}}}
\newcommand{\vertices}[1]{V(#1)}

\newcommand{\cubecell}[2]{Q(#1;#2)}
\newcommand{\heightmodule}[2]{D_{#1,#2}}
\newcommand{\topboundary}{\partial^{+}}
\newcommand{\bottomboundary}{\partial^{-}}
\newcommand{\sublevel}[2]{#1_{\leq #2}}
\newcommand{\superlevel}[2]{#1_{\geq #2}}
\newcommand{\linkshift}{\Theta}
\newcommand{\linkcontraction}{K}
\newcommand{\morsepush}{P}

\ifLuaTeX%
    \newcommand{\indexKernAdjust}{\!}
\else%
    \newcommand{\indexKernAdjust}{}
\fi

\newcommand{\overlapconstant}{\delta}
\newcommand{\altoverlapconst}{\delta'}
\newcommand{\densityset}[1]{E(#1)}
\newcommand{\descChoice}{\epsilon}
\newcommand{\densityhyperplanes}[1]{\mathcal{H}(#1)}
\newcommand{\dualhyperplane}[1]{\operatorname{Hyp}(#1)}
\newcommand{\hyperplanesupport}[1]{\mathcal{H}(#1)}
\newcommand{\associatedhyperplane}[2]{\mathcal{H}_{\indexKernAdjust#1}(#2)}
\newcommand{\goodpart}[2]{\mathord{\operatorname{good}}_{#2}(#1)}
\newcommand{\badpart}[2]{\mathord{\operatorname{bad}}_{#2}(#1)}
\newcommand{\principalpart}[2]{p_{#2}(#1)}
\newcommand{\residuepart}[2]{r_{#2}(#1)}
\newcommand{\choicevertices}[1]{\mathcal{V}^{\pm}(#1)}
\newcommand{\uppervertices}[1]{\mathcal{V}^{+}(#1)}
\newcommand{\lowervertices}[1]{\mathcal{V}^{-}(#1)}
\newcommand{\defectset}[2]{\mathcal{D}_{#2}(#1)}
\newcommand{\defectweight}[2]{\operatorname{Def}_{#2}(#1)}

\newcommand{\expect}{\mathbb{E}}
\newcommand{\totalorders}[1]{\Omega(#1)}
\newcommand{\leastelement}[2]{\mu_{#1}(#2)}
\newcommand{\precOrd}{\mathord{\prec}}

\newcommand{\sfA}{\ensuremath{\mathsf A}}
\newcommand{\rootedtree}[1]{\calT_{\indexKernAdjust#1}}
\newcommand{\FScomplex}[3]{X^{#1}_{#2,#3}}
\newcommand{\RNcomplex}[2]{X_{#1}(#2)}

\newcommand{\vertexrank}[1]{\operatorname{rk}(#1)}
\newcommand{\heightrank}[1]{\mathord{\operatorname{rk}}_{#1}}
\newcommand{\matchingcomplex}[1]{\mathcal{M}(#1)}
\newcommand{\pathhypergraph}[2]{P^{(#1)}_{#2}}
\newcommand{\pathgraph}[1]{P_{#1}}
\newcommand{\cyclehypergraph}[2]{C^{(#1)}_{#2}}
\newcommand{\cyclegraph}[1]{C_{#1}}
\NewDocumentCommand{\completehypergraph}{s m m}{%
    K^{(#2)}\IfBooleanTF{#1}{(#3)}{_{#3}}%
}
\NewDocumentCommand{\completeorderedhypergraph}{s m m}{%
    \tilde{K}^{(#2)}\IfBooleanTF{#1}{(#3)}{_{#3}}%
}
\newcommand{\orderedmatchingcomplex}[1]{\mathcal{M}^{\mathrm{o}}(#1)}
\NewDocumentCommand{\powerset}{o m}{%
    \IfValueTF{#1}{%
        \calP_{\indexKernAdjust#1}(#2)%
    }{%
        \calP(#2)%
    }%
}

\begin{document}

\begin{abstract}
    We give a criterion ensuring that sufficiently high sublevel sets of a discrete Morse function
    on a \CAT{0} cube complex have polynomial integral homological filling functions.
    As one application, we verify this criterion for the Stein--Farley complexes of the Higman--Thompson groups $F_{d,r}$, $T_{d,r}$, and $V_{d,r}$.
    Consequently, these groups have polynomial homological filling functions in every degree $k\geq2$.
\end{abstract}

\maketitle

\overfullrule=1cm

\section{Introduction}%
\label{sec:introduction}

For a finitely presented group $G$, the \emph{Dehn function} $\Dehn_G$ measures the difficulty of
solving the word problem in $G$ using a brute-force approach.
Geometrically, it records the largest area needed to fill a loop of bounded length in the universal cover of a finite presentation complex for $G$.
This viewpoint extends naturally to higher dimensions.
Replacing loops by $k$\=/dimensional spheres gives the \emph{homotopical higher Dehn function} $\Dehn_G^k$.
In this case, one requires the group to be of type $\F_{k+1}$, meaning that it admits a classifying space with finite $(k+1)$\=/skeleton or, equivalently, a cocompact cellular action on a $k$\=/connected CW complex $X$, and one considers the corresponding filling problem in $X$.
The asymptotic equivalence class of $\Dehn_G^k$ is well-defined and is a quasi-isometry invariant for groups of type $\F_{k+1}$~\cite{alonso+wang+pride}.

The $k$\=/dimensional \emph{homological filling function} $\filling{X}{k}$ of a $(k-1)$\=/acyclic CW complex $X$ is obtained by replacing spheres with integral cellular $(k-1)$\=/cycles
and considering fillings by cellular $k$\=/chains, where the size of a chain is measured using the $\ell^1$-norm on the cellular chain complex $C_\ast(X)$.
Thus, $\filling{X}{k}(n)$ measures the largest filling size needed for a cycle of norm at most $n$.
The $k$\=/dimensional \emph{homological Dehn function} $\filling{G}{k}$ of a group $G$ can be defined more generally
as $\filling{X}{k}$ for a $(k-1)$\=/acyclic cocompact $G$\=/CW complex $X$.
If $G$ admits such a complex $X$, it is said to be of type $\FH_k$.
In particular, $\F_k$ implies $\FH_k$, so both $\Dehn_G^{k-1}$ and $\filling{G}{k}$ are defined for groups of type $\F_k$.
Note that, for historical reasons, there is an index shift in the notation.
These functions are closely related.
For $k\geq4$, one has $\Dehn_G^{k-1}\approx\filling{G}{k}$, whereas for $k=3$ one only has $\Dehn_G^2\preccurlyeq\filling{G}{3}$, and the latter inequality can be strict~\cite{young}.
Consequently, bounds for $\filling{G}{k}$ yield bounds for $\Dehn_G^{k-1}$ whenever $k\geq3$.
For $k=2$, all known examples satisfy $\filling{G}{2}\preccurlyeq\Dehn_G$, and this comparison can be strict (see~\cite{Abrams2013}).

Given a discrete Morse function $h \colon X \to \bbR$ on a cube complex $X$
and $a \in \bbR$, the subcomplex $X_{\leq a}$, spanned by those
vertices $v \in X^{(0)}$ with $h(v) \leq a$, is called a \emph{sublevel set} of $X$.
A common strategy for establishing finiteness properties for a group $G$ is
to find a suitable discrete Morse function on a contractible complex $X$ such
that $G$ acts properly and cocompactly on each sublevel set and their connectivity
increases with $a$.
In these cases, the Dehn functions of $G$ agree with the filling functions
of sufficiently connected sublevel sets.
This makes filling functions of sublevel sets a natural tool for studying the (higher) Dehn functions of such groups.

To obtain upper bounds on filling functions of sublevel sets, one can first fill a cycle in the ambient complex
and then push the filling into the desired sublevel set.
Under special geometric or algebraic assumptions, pushing constructions have yielded polynomial bounds in several settings~\cites{gersten+short2002,ABDDY13,BradyKrophollerSoroko2021,kropholler+llosa-isenrich+soroko2025,ChangGarciaMigliorini2026}.
A natural goal is to employ a general procedure that pushes a filling one level at a time along descending links,
as used by Kropholler, Llosa Isenrich, and Soroko~\cite{kropholler+llosa-isenrich+soroko2025}*{Section~3.3}.
The difficulty is that a fixed multiplicative increase in norm at each step gives an upper bound that is exponential in the height of the original filling.
In their hyperbolic setting, they overcome this difficulty by choosing ambient fillings whose height is logarithmic in the boundary length.
Only logarithmically many pushing steps are then required, so the resulting filling bound is polynomial in the boundary length.

The main purpose of this paper is to provide general conditions under which a levelwise pushing procedure yields polynomial filling functions
without any requirements on the height of ambient fillings.
We call these conditions \emph{strong uniform density of descending links} and the \emph{large overlap condition} (see~\cref{def:strong-uniform-density-descending-links,def:large-overlap-descending-links}).
Roughly, the first requires descending links at sufficiently large heights to be uniformly acyclic in each fixed dimension, whereas the second requires, in an appropriate sense, increasingly large overlap between the descending links of neighbouring vertices.

Our main geometric result is the following.

\getkeytheorem*{thm:main-sublevel-fillings}{main-sublevel-fillings}

As a first example, we apply this criterion to the Higman--Thompson groups $F_{d,r}$, $T_{d,r}$, and $V_{d,r}$, where $d\geq2$ and $r\geq1$.
Their elements may be described by equivalence classes of triples $[F_+,g,F_-]$, where $F_+$ and $F_-$ are $d$\=/ary forests with $r$ roots and the same number of leaves, and $g$ is a permutation of the leaves.
For $F_{d,r}$, this permutation is always the identity; for $T_{d,r}$,
it is a cyclic permutation; and for $V_{d,r}$, no restrictions on the permutation are imposed.
We refer to~\cite{SkipperWu2023}*{Section~3.1} for a precise definition.
For $(d,r)=(2,1)$, these are Thompson's groups $F$, $T$, and~$V$.

Brown proved in~\cite{Brown1987} that the Higman--Thompson groups are of type $\F_\infty$, meaning that they are of type $\F_k$ for every $k$, by studying actions on filtered contractible complexes.
Stein later introduced simplified versions of these complexes~\cite{Stein1992}, and Farley constructed \CAT{0} cubical models for them~\cites{Farley2003,Farley2005}.
These models are now known as Stein--Farley complexes.
Crucially for our purposes, these complexes carry a natural discrete Morse function.

Guba first established polynomial upper bounds for the Dehn functions of the Thompson groups $F$, $T$, and $V$~\cites{Guba1998,Guba2000}.
He later proved that the Dehn function of $F=F_{2,1}$ is quadratic~\cite{Guba2005}, and Zhang extended this result to $F_{d,r}\cong F_{d,1}$ for $d>2$~\cite{Zhang2015}.
Migliorini proved that $T_{d,1}$ has quadratic Dehn function for every $d\geq2$~\cite{Migliorini2025}.
The Dehn function of $V$ is also conjectured to be quadratic, while the best currently known upper bound is a polynomial of degree six~\cite{lorrain2026}.
The higher-dimensional filling functions have remained less well understood.
In particular, Zaremsky asked whether the higher-dimensional Dehn functions of $F$, $T$, and $V$ all
behave like those of Euclidean space~\cite{ZaremskyOpenProblems}*{Problem~2.5}.
Although we do not answer Zaremsky's question in full,
we give an affirmative answer to the slightly weaker question of whether the higher-dimensional Dehn functions are polynomially bounded.
To do so, we show that the Stein--Farley complexes of all Higman--Thompson groups satisfy the hypotheses of~\cref{thm:main-sublevel-fillings}
and then use the fact that these groups act cocompactly and with finite stabilizers
on the sublevel sets of their Stein--Farley complexes.

\begin{thmalpha}%
    \label{thm:polynomial-stein-farley}
    Let $d\geq2$ and $r\geq1$, and let $\sfA \in \{F,T,V\}$.
    Then the Stein--Farley complex $\FScomplex{\sfA}{d}{r}$
    has strong uniform density of descending links and satisfies the large overlap condition.

    In particular, $\filling{\sfA_{d,r}}{k}$ is polynomial for every $k\geq 2$.
\end{thmalpha}

By the comparison between homological and homotopical Dehn functions discussed above, this also gives
polynomial upper bounds for $\Dehn_G^k$ for every $k\geq2$.

The same method applies to the Stein--Farley complexes of Röver--Nekrashevych groups.
Let $\rootedtree{d}$ be the rooted $d$\=/ary tree.
For each self-similar subgroup $G\leq\Aut(\rootedtree{d})$, the associated Röver--Nekrashevych group $V_d(G)$ generalizes Thompson's group $V$.
A subgroup $G$ of $\Aut(\rootedtree{d})$ is self-similar if every
restriction of an element of $G$ to a rooted subtree of $\rootedtree{d}$ is again an element of $G$.
See~\cite{SkipperZaremsky2024} for a precise definition of $V_d(G)$.
The Stein--Farley complex of $V_d(G)$ has strong uniform density of descending links and satisfies the large overlap condition for every self-similar group $G$.
The $V_d(G)$\=/action on each sublevel set is cocompact and, when $G$ is finite, has finite cell stabilizers.

\begin{thmalpha}%
    \label{thm:polynomial-RN}
    Let $d\geq2$ and let $G\leq\Aut(\rootedtree{d})$ be self-similar.
    The Stein--Farley complex $\RNcomplex{d}{G}$ has strong uniform density
    of descending links and satisfies the large overlap condition.

    In particular, if $G$ is finite,
    $\filling{V_{d}(G)}{k}$ is polynomial for every $k\geq 2$.
\end{thmalpha}

\subsection*{Strategy of proof}
The proof of~\cref{thm:main-sublevel-fillings} uses a probabilistic version of Morse pushing on the cellular chain complex.
A push operator at height $m$ is determined by a choice of contractions for the descending links at vertices of height $m$.
Under the assumptions of~\cref{thm:main-sublevel-fillings}, these can be constructed from a choice of a descending edge $e_v$ at each vertex $v$.
One observes that the resulting terms after pushing cancel whenever the choices $e_v$ and $e_w$ for adjacent vertices $v$ and $w$ determine the same hyperplane.
The large overlap between neighbouring descending links guarantees that such cancellations occur with high probability for a suitable random choice.
Estimating the expected cost then gives a choice for which the iterated pushing cost is polynomial.

The central idea of this probabilistic approach is to equip a suitable non-empty family of fillings with a probability distribution for which the expected volume is polynomially bounded.
It follows that some (non-explicit) filling in the family has polynomially bounded volume.

Probabilistic arguments based on the expected volume of random chains have previously proved useful in filling problems.
Examples include the averaging argument in the Federer--Fleming deformation theorem~\cite{Federer1960}, on which Dotterrer builds in his work on fillings in the cube~\cite{Dotterrer2015}, as well as the work of Leuzinger--Young
on filling functions of arithmetic groups~\cite{LeuzingerYoung2021}.

\subsection*{Structure}

In~\cref{sec:preliminaries}, we recall definitions of filling functions
and basic notions in simplicial complexes.
\Cref{sec:uniform-density-contractions} constructs uniformly bounded partial chain contractions of flag complexes.
In~\cref{sec:cubical-Morse-pushing}, we develop the chain-level pushing procedure and prove~\cref{thm:main-sublevel-fillings}.
Finally, in~\cref{sec:Higman-Thompson-strong-uniform-density,sec:Rover-Nekrashevych-strong-uniform-density}, we verify strong uniform density and the large overlap condition and deduce the polynomial bounds for Higman--Thompson and certain Röver--Nekrashevych groups.

\subsection*{Disclosure of AI usage}
Large language model (LLM) tools, specifically GPT~5.6, were used to produce the combinatorial arguments in~\cref{sec:Higman-Thompson-strong-uniform-density,sec:Rover-Nekrashevych-strong-uniform-density}.
To obtain these results, the LLM was given the conditions in~\cref{def:strong-uniform-density-descending-links,def:large-overlap-descending-links}, which must be verified to apply~\cref{thm:main-sublevel-fillings}, and was tasked with checking them for the Stein--Farley complexes of Higman--Thompson groups and Röver--Nekrashevych groups.
For this purpose, the LLM also formulated and proved~\cref{prop:full-descending-link-density-criterion} as a sufficient criterion for the conditions of~\cref{thm:main-sublevel-fillings}.
For readability, we place it in~\cref{sec:cubical-Morse-pushing} rather than~\cref{sec:Higman-Thompson-strong-uniform-density}.
The tools were also used for editorial checks and proofreading.
All the text in this article was written by me, and I take full responsibility for its content.

\subsection*{Acknowledgements}
I would like to thank Claudio Llosa Isenrich, Matteo Migliorini, Roman Sauer, and Eduard Schesler for many helpful discussions and comments during the preparation of this work.
This research was supported by the \foreignlanguage{ngerman}{Deutsche Forschungsgemeinschaft} (DFG, German Research Foundation) through Research Training Group 2229, project number 281869850.

\section{Preliminaries}%
\label{sec:preliminaries}

All chain complexes are taken with integer coefficients.
Differentials of chain complexes are sometimes generically written as $\partial$.
If $C$ is a based free $\bbZ$\=/module and $c=\sum_\sigma c_\sigma\sigma$,
its $\ell^1$-norm is given by $\norm{c}=\sum_\sigma\abs{c_\sigma}$.
For a chain complex $C_\ast$, its \emph{suspension} $\suspension{C_\ast}$ is the chain complex defined by $(\suspension{C_\ast})_k=C_{k-1}$ and $\partial^{\suspension{C_\ast}}_k=-\partial^C_{k-1}$.

\subsection{Homological filling functions}

\begin{defn}%
    \label{def:cellular-filling-function}
    Let $k \geq 2$ and let $X$ be a $(k-1)$\=/acyclic CW complex.
    The \emph{filling norm} of a cellular $(k-1)$\=/cycle $z$ is
    \[
        \fillvolume[X](z) \defq \inf\{\norm{b} \mid b\in C_k(X),\ \partial b=z\}.
    \]
    The \emph{$k$-dimensional homological filling function} of $X$ is
    \[
        \filling{X}{k}(n) \defq \sup\{\fillvolume[X](z) \mid z\in C_{k-1}(X),\ \partial z = 0,\ \norm{z}\leq n\}.
    \]

    Given functions $f, g \colon \bbR_{\ge 0} \to \bbR_{\ge 0}$, we write $f \preccurlyeq g$ if there are constants
    $C, D > 0$ such that $f(v) \leq C g(Cv + D) + Cv + D$ for every $v \geq 0$.
    We write $f \approx g$ if both $f \preccurlyeq g$ and $g \preccurlyeq f$ hold.
\end{defn}

\begin{defn}
    A function $f \colon \bbR_{\ge 0} \to \bbR_{\ge 0}$ is called \emph{polynomial} if there are constants $A>0$
    and $d\geq0$ such that $f(v) \leq A(1+v)^d$ for every $v \geq 0$.
    It is called \emph{superadditive} if, for every partition $n=n_1+\cdots+n_k$ of $n\in \mathbb{N}_0$ into non-negative integers, we have $f(n)\geq f(n_1)+\cdots+f(n_k)$.
\end{defn}

\begin{defn}
    Let $G$ be a group of type $\FH_n$, and let $X$ be an $(n-1)$\=/acyclic $G$\=/CW complex on which $G$ acts freely, properly, and cocompactly.
    The \emph{\Nth{n} homological Dehn function} of $G$ is the function $\filling{G}{n} \defq \filling{X}{n}$.
    This function is well-defined up to $\approx$\=/equivalence~\cite{alonso}.
\end{defn}

\subsection{Simplicial chains and flag complexes}

We regard a simplex both as a finite set of vertices and as a cell.
An ordering $[v_0,\ldots,v_j]$ of its vertices determines an orientation.
Two orderings determine the same orientation precisely when they differ by an even permutation.
If a vertex occurs twice, the resulting element of the simplicial chain module is zero.

\begin{defn}%
    \label{def:simplicial-language}
    Let $\sigma$ be a simplex of a simplicial complex $L$.
    \begin{refenum}
        \item\label{def:simplicial-language:star}
              The \emph{star} $\simpstar_L(\sigma)$ consists of the
              simplices contained in some simplex of the form
              $\sigma\cup\tau$.
        \item\label{def:simplicial-language:link}
              The \emph{link} $\simplink_L(\sigma)$ consists of the
              simplices $\tau$ which are disjoint from $\sigma$ and for which
              $\sigma\cup\tau$ is a simplex of $L$.
        \item\label{def:simplicial-language:flag}
              The complex $L$ is \emph{flag} if every finite set of
              pairwise adjacent vertices is the vertex set of a simplex.
    \end{refenum}
\end{defn}

Let $L$ be a simplicial complex.
We write $\augm{C}_*(L)$ for the augmented simplicial chain complex with $\augm{C}_{-1} = \bbZ$.
Suppose that a map $f\colon L^{(0)}\to K^{(0)}$ takes the vertex set of every simplex of $L$ to the vertex set of a simplex of $K$.
It induces a map $f_\ast \colon \augm{C}_\ast(L) \to \augm{C}_\ast(K)$ given by
$f_j([v_0,\ldots,v_j]) = [f(v_0),\ldots,f(v_j)]$.
If a tuple $\sigma = [v_0,\ldots,v_j]$ contains a repeated vertex, then we consider $\sigma = 0$ as an element of $C_j(L)$.
Let $x\in L^{(0)}$.
For every $\sigma=[v_0,\ldots,v_j]$ such that
$\{x,v_0,\ldots,v_j\}$ is a simplex of $L$,
we define the \emph{cone} $x*\sigma\defq[x,v_0,\ldots,v_j]$.
This cone construction induces homomorphisms
$\conemap{x}[j] \colon \augm{C}_j(\simpstar_L(x)) \to \augm{C}_{j+1}(L)$, where $\conemap{x}[-1](1) = [x]$.
They satisfy $\partial \conemap{x}[j]+\conemap{x}[j-1]\partial=\id$ on $\augm{C}_*(\simpstar_L(x))$.
Moreover, they do not increase the $\ell^1$-norm.

\begin{defn}%
    \label{def:vertex-support}
    Let $L$ be a simplicial complex.
    For $c = \sum_\sigma c_\sigma \sigma \in C_j(L)$, its \emph{support} is defined as $\supp(c) \defq \{\sigma \mid c_\sigma \neq 0\}$.
    Its \emph{vertex support} $\vertices{c}$ is the union of the vertex sets of the simplices in $\supp(c)$.
\end{defn}

These sets satisfy $\size{\supp(c)} \leq \norm{c}$ and
$\size{\vertices{c}} \leq (j+1)\size{\supp(c)} \leq (j+1)\norm{c}$ for every integral $j$-chain $c$.
The same sets are defined analogously for cubical complexes.

\begin{defn}%
    \label{def:compatibility}
    Let $L$ be a simplicial complex and let $\sigma$ be a simplex of $L$.
    Two vertices are \emph{compatible} in $L$ if they are equal or span
    an edge, and they are \emph{incompatible} otherwise.
    A vertex is \emph{compatible} with $\sigma$ if it
    is compatible with every vertex of $\sigma$.
\end{defn}

If $L$ is flag, a vertex $x$
is compatible with a simplex $\sigma$ precisely when $\sigma$ belongs to $\simpstar_L(x)$.

\begin{defn}%
    \label{def:star-retraction}
    Let $L$ be a flag complex and let $e\in L^{(0)}$.
    Define a map on the
    vertices of $L$ by
    \[
        r_e(v)=\begin{cases}
        v,&\text{if $v$ is compatible with $e$},\\
        e,&\text{if $v$ is incompatible with $e$}.
        \end{cases}
    \]
\end{defn}

\begin{lem}%
    \label{lem:star-retraction}
    The map in~\cref{def:star-retraction}
    extends to a well-defined simplicial map $r_e\colon L\to\simpstar_L(e)$,
    which restricts to the identity on $\simpstar_L(e)$.
    It is therefore a simplicial retraction.
\end{lem}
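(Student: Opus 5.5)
To prove \cref{lem:star-retraction}, I would check directly that $r_e$ sends the vertex set of every simplex of $L$ to the vertex set of a simplex of $\simpstar_L(e)$. Since $L$ is flag, the simplices of $\simpstar_L(e)$ are exactly the simplices of $L$ all of whose vertices are compatible with $e$. A simplex of $\simpstar_L(e)$ is a subset of some simplex $\sigma\cup\{e\}$, so by the remark after \cref{def:compatibility} this amounts to the simplices in $\simpstar_L(x)$ for $x=e$. The argument then has three steps: describe the image of a simplex, show it is a simplex of $L$, and show it lies in the star.

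Let $\sigma$ be a simplex of $L$ and split its vertices as $\sigma=\sigma_c\sqcup\sigma_i$. Here $\sigma_c$ consists of the vertices compatible with $e$, and $\sigma_i$ consists of those incompatible with $e$. Then $r_e(\sigma)=\sigma_c$ if $\sigma_i=\emptyset$, and $r_e(\sigma)=\sigma_c\cup\{e\}$ otherwise.

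In the first case, $\sigma_c=\sigma$ is a simplex of $L$ whose vertices are all compatible with $e$. By flagness, $\sigma\cup\{e\}$ is a simplex, because its vertices are pairwise adjacent or equal. Hence $\sigma\in\simpstar_L(e)$.

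In the second case, the vertices of $\sigma_c$ are pairwise adjacent, being vertices of $\sigma$, and each is compatible with $e$. So $\sigma_c\cup\{e\}$ is a finite set of pairwise compatible vertices. By flagness it is a simplex of $L$, and it contains $e$, so it lies in $\simpstar_L(e)$. Faces map to faces of the image, so the vertex map extends to a simplicial map $L\to\simpstar_L(e)$.

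For the retraction property, let $\tau\in\simpstar_L(e)$. Then $\tau\subseteq\sigma\cup\{e\}$ for some simplex containing $e$, so every vertex of $\tau$ is equal or adjacent to $e$, that is, compatible with $e$. Hence $r_e$ fixes every vertex of $\tau$, and so $r_e$ restricts to the identity on $\simpstar_L(e)$.

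There is no substantial obstacle. The only point needing care is the second case, where $e$ may already lie in $\sigma_c$ or a vertex of $\sigma_i$ may be mapped onto a vertex already present. Repeated vertices just make the image a lower-dimensional simplex, which is allowed for a simplicial map; on chains they give zero, consistent with the conventions stated above.
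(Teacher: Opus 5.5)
Your proof is correct and follows the paper's argument. The image of a simplex consists of its vertices compatible with $e$, together with $e$ itself when some vertex is incompatible; these vertices are pairwise compatible, so flagness gives a simplex of $\simpstar_L(e)$, and every vertex of the star is compatible with $e$ and hence fixed. Your case split is a little more careful than the paper's one-line description of the image. One minor inaccuracy in your closing remark: in your second case $e$ cannot lie in $\sigma$, since otherwise every vertex of $\sigma$ would be compatible with $e$.
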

\begin{proof}
    Let $\sigma$ be a simplex.
    The distinct vertices of $r_e(\sigma)$ are $e$ together with vertices of $\sigma$ which
    are adjacent to $e$, and hence the images of the vertices of $\sigma$ are pairwise adjacent.
    Because $L$ is flag, they span a simplex in $\simpstar_L(e)$.
    Note that every vertex of $\simpstar_L(e)$ is compatible with $e$ and is therefore fixed by $r_e$.
    Hence, $r_e$ is a retraction.
\end{proof}

We also denote the induced map on chain complexes by $r_e$.
Because it is a retraction, it satisfies $\norm{r_e(c)}\leq\norm{c}$.

\section{Uniform contractions of dense flag complexes}%
\label{sec:uniform-density-contractions}

The following notion is a variation of the \emph{uniform density}
condition due to Belk--Forrest~\cite{BelkForrest2019}.
It gives a combinatorial criterion for the cellular chain complex of a simplicial complex to
admit a bounded chain contraction.

\begin{defn}%
    \label{def:uniform-density}
    Let $b\in\bbN_0$ and let $L$ be a flag complex.
    A subset $E\subseteq L^{(0)}$ is called \emph{$b$\=/dense} in $L$ if every vertex of $L$ is incompatible with at most $b$ vertices of $E$.
    We say that $L$ is uniformly \emph{$b$\=/dense} if $L^{(0)}$ is $b$\=/dense in $L$.
\end{defn}

If the vertex set $\vertices{\sigma}$ of a simplex in $L$ is $b$\=/dense, it is also referred to as a \emph{ground}~\cite{BelkForrest2019}*{Definition~4.8}.
However, we do not make use of this notion.

\begin{defn}%
    \label{def:R-good-density}
    Let $R>0$ and $b\in\bbN_0$.
    Let $L$ be a flag complex and let $E\subseteq L^{(0)}$ be $b$\=/dense in $L$.
    We say that $E$ is \emph{$(R,b)$\=/good} if $\size{E}>Rb$.
\end{defn}

\begin{thm}[{\cite{BelkForrest2019}*{Theorem~4.11}}]%
    \label{thm:Belk-Forrest-density}
    Let $k\geq0$ and $b\geq1$.
    Let $L$ be a uniformly $b$\=/dense flag complex.
    If $L^{(0)}$ is $((b+1)(k+1),b)$\=/good, then $L$ is $k$\=/connected.
\end{thm}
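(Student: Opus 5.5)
The plan is to prove that every simplicial $j$-sphere in $L$ with $j\leq k$ bounds, by induction on $k$, via the star retractions $r_e$ of \cref{lem:star-retraction}. Connectivity is detected by simplicial maps of spheres. Since $L^{(0)}$ is $((b+1)(k+1),b)$-good, we have $|L^{(0)}|>(b+1)(k+1)b\geq (k+1)b+1$.

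The key step is as follows. Let $S$ be a finite subcomplex of $L$ with at most $(k+1)$-dimensional simplices; for example, take $S$ to be the image of a triangulated sphere $S^j$, $j\leq k$, after simplicial approximation. I would look for a vertex $e$ compatible with every vertex of some fixed simplex-worth of data, and then contract. Concretely, let $f\colon S^j\to L$ be simplicial. Choose a vertex $e$ and apply $r_e$. Then $r_e\circ f$ lands in $\simpstar_L(e)$, which is a cone and hence contractible. So it suffices to show that $f$ is homotopic to $r_e\circ f$.

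On a simplex $\sigma$ of $f(S^j)$, consider its vertices $v$ incompatible with $e$, which get sent to $e$. The straight-line homotopy from $f$ to $r_e\circ f$ is valid on $\sigma$ provided $\sigma\cup r_e(\sigma)$ spans a simplex. By flagness this requires the vertices $v\in\sigma$ that are incompatible with $e$ to be adjacent to $e$, which is false. So the direct homotopy fails, and I would instead move vertices one at a time. The homotopy moves each incompatible vertex $v$ to $e$ through an intermediate vertex $w$ that is compatible with $e$, with $v$, and with all neighbours of $v$ in the relevant simplex. This is where the counting enters.

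The counting argument runs as follows. A simplex of $f(S^j)$ has at most $j+1\leq k+1$ vertices. The set of vertices incompatible with at least one of $e$, $v$, or the at most $k$ other vertices of a simplex containing $v$ has size at most $(k+2)b$ by uniform $b$-density. This is smaller than $|L^{(0)}|$, so a suitable $w$ exists. A cleaner route is to work simplex by simplex, which needs only the weaker goodness constant, and the factor $(b+1)$ in the hypothesis leaves room for an extra vertex per step. I expect the main obstacle to be organizing these local moves so that they are simultaneously consistent on $S^j$. The vertex link of $v$ in $S^j$ can be large, so a single $w$ cannot be required to be compatible with all neighbours of $v$. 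To handle this I would first subdivide, or instead use the induction hypothesis in lower dimensions, applied to links, which are again dense flag complexes with slightly worse constants. Indeed, the link of a simplex $\tau$ in $L$ is uniformly $b$-dense, and by density it has more than $|L^{(0)}|-|\tau|b$ vertices. This is where the factor $(b+1)(k+1)$ is calibrated for the induction to close: each step down in dimension costs at most $b+1$ vertices.

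In summary, I would prove the statement by induction on $k$, in the form: every uniformly $b$-dense flag complex with more than $(b+1)(k+1)b$ vertices is $k$-connected. The case $k=0$ is immediate: two vertices $u,v$ together are incompatible with at most $2b<|L^{(0)}|$ vertices, so some vertex is adjacent or equal to both. For the inductive step, I would take a combinatorial $k$-sphere $f$ and show that the vertex links $\simplink_L(v)$ are $(k-1)$-connected by induction. This allows $f$ to be pushed off each vertex in turn via the link, and eventually into $\simpstar_L(e)$ for a chosen $e$, which is contractible.
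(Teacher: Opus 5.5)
The paper does not prove this statement; it is quoted from Belk--Forrest. The only related argument in the paper is the homological \cref{lem:quantitative-density-homotopy}, which cones off cycles of bounded vertex support. Your proposal therefore has to stand on its own, and it has a gap exactly at the step you identify as the main obstacle.

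Your frame is sound. By flagness, $\simpstar_L(e)$ is the full subcomplex on the vertices compatible with $e$, and it is contractible. At most $b$ vertices are incompatible with $e$. Full subcomplexes of $L$ are again flag and uniformly $b$-dense. You are also right to discard the intermediate-vertex idea.

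The inductive step you finally state is where it fails: the links $\simplink_L(v)$ are $(k-1)$-connected, hence $f$ can be pushed off the bad vertices in turn. This does not guarantee progress. Suppose $f(s)=v$ and you replace $f$ on $\simpstar_S(s)$ by a filling of $f|_{\simplink_S(s)}$ inside $\simplink_L(v)$. This removes that occurrence of $v$. But the filling may pass through other vertices incompatible with $e$, including ones you have already pushed off. So no complexity decreases, and the procedure need not terminate. Connectivity of the vertex links of $L$ is not by itself a global statement. Your $k=0$ case works because it uses density globally, and that is exactly what the inductive step lacks.

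The missing idea is to order the bad vertices and to fill inside \emph{relative} links. Let $u_1,\dots,u_c$, with $c\le b$, be the vertices incompatible with $e$. Let $L_i$ be the full subcomplex on $\simpstar_L(e)^{(0)}\cup\{u_1,\dots,u_i\}$. Then $L_0=\simpstar_L(e)$ and $L_c=L$. Moreover, $L_i$ arises from $L_{i-1}$ by attaching the cone $u_i*\simplink_{L_i}(u_i)$ along $\simplink_{L_i}(u_i)=\simplink_L(u_i)\cap L_{i-1}$.

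This relative link is a full subcomplex of $L$, hence flag and uniformly $b$-dense. It has at least $\lvert L^{(0)}\rvert-2b$ vertices, because it misses $u_i,\dots,u_c$ and at most $b$ vertices incompatible with $u_i$. So the cost per dimension is $2b$, not the $b+1$ from your count for $\simplink_L(v)$. Since $(b+1)(k+1)b-2b\geq(b+1)kb$ for $b\geq1$, your induction hypothesis shows that $\simplink_{L_i}(u_i)$ is $(k-1)$-connected. Hence each pair $(L_i,L_{i-1})$ is $k$-connected, and $\pi_j(L)=0$ for $j\leq k$.

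In your sphere-pushing language, this means the following. Push off $u_c$ first. Then push off $u_{c-1}$ using fillings in $\simplink_L(u_{c-1})$ with $u_c$ deleted, and so on. The ordering and the deletion are what make the process terminate. The cone-attachment formulation also spares you from first making $f$ simplexwise injective.
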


The following~\lcnamecref{lem:quantitative-density-homotopy} can be considered a quantitative homological analogue of Belk and Forrest's~\cref{thm:Belk-Forrest-density}.

\begin{lem}%
    \label{lem:quantitative-density-homotopy}
    Fix $p\geq0$.
    There exist non-negative constants
    $K_{-1}, K_0,\ldots, K_p$ and a constant $R_p>0$ with the following property.
    Let $L$ be a flag complex with a subset $E \subseteq L^{(0)}$ which is $b$\=/dense and $(R_p,b)$\=/good for some $b \in \bbN_0$.
    For every $e\in E$, there are homomorphisms
    $H_{e,j}\colon\augm{C}_j(L)\to\augm{C}_{j+1}(L)$ for $-1\leq j\leq p$ such that
    \begin{refenum}
        \item\label{lem:quantitative-density-homotopy:homotopy}
              $\partial H_{e,j}+H_{e,j-1}\partial=\id - r_{e,j}$ for $0\leq j\leq p$.
        \item\label{lem:quantitative-density-homotopy:norm}
              $\norm{H_{e,j}(\sigma)}\leq K_j$ for every $j$-simplex $\sigma$.
        \item\label{lem:quantitative-density-homotopy:vanishing}
              $H_{e,j}(\sigma)=0$ if $\sigma$ belongs to $\simpstar_L(e)$.
    \end{refenum}
\end{lem}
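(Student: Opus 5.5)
The plan is to build the maps $H_{e,j}$ one degree at a time, using the cone maps $\conemap{x}$ around a suitable vertex $x\in E$. Set $H_{e,-1}\defq 0$ and $K_{-1}\defq0$. Fix $j\geq0$ and suppose $H_{e,i}$ has been constructed for $i<j$ with properties \cref{lem:quantitative-density-homotopy:homotopy,lem:quantitative-density-homotopy:norm,lem:quantitative-density-homotopy:vanishing}.

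For each $j$\=/simplex $\sigma$, fixing one orientation per simplex, we define $H_{e,j}(\sigma)$ as follows.
\begin{itemize}
    \item If $\sigma\in\simpstar_L(e)$, set $H_{e,j}(\sigma)=0$. Then $r_e(\sigma)=\sigma$, and every face of $\sigma$ also lies in $\simpstar_L(e)$. By induction $H_{e,j-1}(\partial\sigma)=0$, so the identity \cref{lem:quantitative-density-homotopy:homotopy} holds on $\sigma$.
    \item Otherwise, put
    \[
        z\defq\sigma-r_e(\sigma)-H_{e,j-1}(\partial\sigma)\in\augm C_j(L).
    \]
    We will choose $H_{e,j}(\sigma)$ with $\partial H_{e,j}(\sigma)=z$.
\end{itemize}

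First, $z$ is a cycle in the augmented complex. Since $r_e$ is a chain map, the inductive identity gives
\[
    \partial H_{e,j-1}(\partial\sigma)=\partial\sigma-r_e(\partial\sigma)-H_{e,j-2}(\partial\partial\sigma)=\partial\sigma-r_e(\partial\sigma).
\]
Hence $\partial z=0$. For $j=0$ this reads $\varepsilon(v-r_e(v))=0$, which holds because $H_{e,-1}=0$.

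Second, $z$ has controlled size. We have
\[
    \norm{z}\leq 2+(j+1)K_{j-1},
\]
so its vertex support satisfies $\size{\vertices{z}}\leq N_j\defq (j+1)\bigl(2+(j+1)K_{j-1}\bigr)$.

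The key step is to find a cone point. Each vertex of $\vertices{z}$ is incompatible with at most $b$ vertices of $E$, so at most $N_jb$ vertices of $E$ are incompatible with some vertex of $z$. Choose $R_p\defq\max_{j\leq p}N_j$; this is positive since $N_0\geq2$. If $\size{E}>R_pb$, there is some $x\in E$ compatible with every vertex of $\vertices{z}$. This also covers $b=0$, because then $E\neq\emptyset$.

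Since $L$ is flag, each simplex $\tau$ in $\supp(z)$ satisfies that $\tau\cup\{x\}$ is pairwise adjacent, hence a simplex. Thus $z\in\augm C_j(\simpstar_L(x))$, and we set
\[
    H_{e,j}(\sigma)\defq\conemap{x}[j](z).
\]
The cone identity $\partial\conemap{x}+\conemap{x}\partial=\id$ together with $\partial z=0$ gives $\partial H_{e,j}(\sigma)=z$. This is exactly \cref{lem:quantitative-density-homotopy:homotopy} on $\sigma$. Because cones do not increase the norm, we get \cref{lem:quantitative-density-homotopy:norm} with
\[
    K_j\defq 2+(j+1)K_{j-1}.
\]
Extending $\bbZ$\=/linearly, with $H_{e,j}(-\sigma)=-H_{e,j}(\sigma)$, gives the homomorphism.

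These constants depend only on $p$ and not on $L$, $E$, $b$ or $e$, which is what the statement requires. The main point needing care is the cone-point step: the support of $z$ must be bounded independently of $L$ so that a single density threshold $R_p$ works in all degrees up to $p$. This is guaranteed by the recursive bound on the $K_j$.
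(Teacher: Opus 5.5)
Your proposal is correct and matches the paper's proof: the same recursion $K_j=2+(j+1)K_{j-1}$, the same threshold $R_p=\max_{j\leq p}(j+1)K_j$, and the same cycle $\sigma-r_e(\sigma)-H_{e,j-1}(\partial\sigma)$ coned off at a vertex of $E$ compatible with its vertex support. The only cosmetic difference is that you set $H_{e,j}=0$ on $\simpstar_L(e)$ by definition, whereas the paper shows by induction that the cycle itself vanishes there.
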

\begin{proof}
    Set $K_{-1}=0$ and $K_j=2+(j+1) K_{j-1}$ for $0\leq j\leq p$.
    We shall prove the result with $R_p=\max_{0\leq j\leq p}(j+1) K_j$.

    Fix $e\in E$ and set $H_{e,-1}=0$.
    For the inductive step, let $0\leq j\leq p$.
    Suppose that $H_{e,i}$ has been constructed up to degree $j-1$.
    Let $\sigma$ be a $j$-simplex and put
    $\beta_\sigma=\sigma-r_e(\sigma)-H_{e,j-1}(\partial\sigma)$.
    Then
    \begin{align*}
        \partial \beta_\sigma &= (\id - r_{e,j-1})(\partial \sigma) - \partial H_{e,j-1}(\partial \sigma) \\
                              &= (\partial H_{e,j-1} + H_{e,j-2}\partial)(\partial \sigma) - \partial H_{e,j-1}(\partial \sigma) = 0.
    \end{align*}
    Hence, $\beta_\sigma$ is a cycle.
    Furthermore, $\norm{\beta_\sigma}\leq 2+(j+1) K_{j-1}= K_j$, so
    \begin{equation}
        \size{\vertices{\beta_\sigma}}\leq(j+1) K_j \leq R_p.
    \end{equation}
    Each vertex in $\vertices{\beta_\sigma}$ is incompatible with at most $b$ vertices of $E$, so at most $R_pb$ vertices of $E$ are incompatible with some vertex of $\vertices{\beta_\sigma}$.
    Since $E$ is $(R_p,b)$\=/good, there is a vertex $x_\sigma\in E$ compatible with all of $\vertices{\beta_\sigma}$.
    Because $L$ is flag, this implies that $\beta_\sigma$ is supported in $\simpstar_L(x_\sigma)$.

    Define $H_{e,j}(\sigma)=x_\sigma*\beta_\sigma$ and extend linearly.
    Then $\partial H_{e,j}(\sigma) = \beta_\sigma$, so~\localref{lem:quantitative-density-homotopy:homotopy} holds in degree $j$.
    Also,~\localref{lem:quantitative-density-homotopy:norm} follows immediately, because $\norm{x_\sigma * \beta_\sigma} \leq \norm{\beta_\sigma} \leq K_j$.

    If $\sigma$ belongs to $\simpstar_L(e)$, then $r_e(\sigma)=\sigma$
    and every cell in $\partial \sigma$ is also contained in $\simpstar_L(e)$.
    Hence, induction on the dimension of $\sigma$ gives $H_{e,j-1}(\partial \sigma) = 0$,
    so $\beta_\sigma = 0$ and $H_{e,j}(\sigma) = 0$, proving~\localref{lem:quantitative-density-homotopy:vanishing}.
\end{proof}

The homotopy $H_e$ measures the difference between the identity and the retraction $r_e$.
Composing the retraction with the cone operator on its image turns this into a contraction of the augmented chain complex through degree $p$.

\begin{defn}%
    \label{def:density-contraction}
    Let $p\geq0$ and
    let $L$ be a flag complex with a subset $E \subseteq L^{(0)}$ which is $b$\=/dense and $(R_p,b)$\=/good for some $b \in \bbN_0$.
    Let $H_e$ be the maps obtained from~\cref{lem:quantitative-density-homotopy}.
    For $e\in E$, define
    \[
        F_{e,j} \defq H_{e,j} + \conemap{e}[j] \circ r_{e,j} \colon \augm{C}_j(L)\longrightarrow\augm{C}_{j+1}(L),
    \]
    for $-1\leq j\leq p$.
\end{defn}

\begin{cor}%
    \label{cor:quantitative-density-contraction}
    Let $p\geq0$ and $b\in\bbN_0$,
    and let $L$ be a flag complex with a $b$\=/dense, $(R_p,b)$\=/good subset $E \subseteq L^{(0)}$.
    Then for every $e\in E$, one has
    \begin{refenum}
        \item\label{cor:quantitative-density-contraction:contraction}
              $\partial F_{e,j} + F_{e,j-1}\partial = \id$ for $-1 \leq j \leq p$;
        \item\label{cor:quantitative-density-contraction:basis-norm}
              $\norm{F_{e,j}(\sigma)} \leq K_j+1$ for every $j$-simplex $\sigma$;
        \item\label{cor:quantitative-density-contraction:cone-formula}
              $F_{e,j}(\sigma)=e*\sigma$ if $e$ is compatible with $\sigma$.
    \end{refenum}
\end{cor}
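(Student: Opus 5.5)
We verify the three items directly from \cref{def:density-contraction}, using \cref{lem:quantitative-density-homotopy} for $H_e$ and the cone identity $\partial\conemap{e}[j]+\conemap{e}[j-1]\partial=\id$ on $\augm{C}_\ast(\simpstar_L(e))$. The map $\conemap{e}[j]\circ r_{e,j}$ is well-defined because $r_e$ maps $L$ into $\simpstar_L(e)$ by \cref{lem:star-retraction}. We use the convention $F_{e,-2}=0$ and $H_{e,-2}=0$, and we extend $r_e$ to degree $-1$ as the identity on $\augm{C}_{-1}=\bbZ$. This extension is consistent, since $r_e$ is simplicial and therefore commutes with the augmentation.

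For \localref{cor:quantitative-density-contraction:contraction}, the key point is that $r_e$ is a chain map commuting with $\partial$, including the augmentation. Expanding gives
\begin{align*}
\partial F_{e,j}+F_{e,j-1}\partial
&=(\partial H_{e,j}+H_{e,j-1}\partial)+(\partial\conemap{e}[j]+\conemap{e}[j-1]\partial)\,r_{e,j}\\
&=(\id-r_{e,j})+r_{e,j}=\id .
\end{align*}
Here we used $r_{e,j-1}\partial=\partial r_{e,j}$ and the fact that $r_{e,j}(\sigma)$ lies in $\augm{C}_j(\simpstar_L(e))$, where the cone identity applies. For $j\ge 0$, the first bracket equals $\id-r_{e,j}$ by \localref{lem:quantitative-density-homotopy:homotopy}. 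For $j=-1$, we have $H_{e,-1}=0$ by construction, so $\partial F_{e,-1}(1)=\partial[e]=1$ directly.

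For \localref{cor:quantitative-density-contraction:basis-norm}, we combine \localref{lem:quantitative-density-homotopy:norm} with the facts that $r_e(\sigma)$ is a single simplex or zero and that the cone does not increase the norm. Together these give $\norm{F_{e,j}(\sigma)}\le K_j+1$.

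For \localref{cor:quantitative-density-contraction:cone-formula}, suppose $e$ is compatible with $\sigma$. Since $L$ is flag, $\sigma$ lies in $\simpstar_L(e)$. Then $r_e(\sigma)=\sigma$, and $H_{e,j}(\sigma)=0$ by \localref{lem:quantitative-density-homotopy:vanishing}, so $F_{e,j}(\sigma)=e*\sigma$. This cone is zero if $e\in\sigma$, consistent with the convention on repeated vertices.

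The only delicate point is the bookkeeping in degree $-1$: one has to check that the conventions make $r_{e,-1}=\id$ and that the cone identity holds there. Everything else is formal.
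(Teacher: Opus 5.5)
Your proof is correct and follows the paper's argument. Item (i) comes from the homotopy identity of \cref{lem:quantitative-density-homotopy} together with the cone identity, using that $r_e$ is a chain map. Item (ii) comes from the norm bound plus the fact that $r_e$ and the cone do not increase the norm, and item (iii) from the vanishing property on the star. Your explicit handling of degree $-1$ (where $H_{e,-1}=0$ and $F_{e,-1}(1)=[e]$) spells out bookkeeping that the paper leaves implicit.
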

\begin{proof}
    Ad~\localref{cor:quantitative-density-contraction:contraction}.
    This follows immediately from~\cref{lem:quantitative-density-homotopy:homotopy} and the identity $\partial \conemap{e}[j] + \conemap{e}[j-1] \partial=\id$.

    Ad~\localref{cor:quantitative-density-contraction:basis-norm}.
    Use~\cref{lem:quantitative-density-homotopy:norm} and the fact that $r_{e,j}$ and $\conemap{e}[j]$ are norm non-increasing.

    Ad~\localref{cor:quantitative-density-contraction:cone-formula}.
    Compatibility implies that $\sigma$ belongs to
    $\simpstar_L(e)$ and that $r_{e,j}(\sigma)=\sigma$.
    Consequently,~\cref{lem:quantitative-density-homotopy:vanishing} gives $H_{e,j}(\sigma)=0$, while
    the construction of $\conemap{e}$ gives $\conemap{e}[j] \circ r_{e,j}(\sigma)=e*\sigma$.
\end{proof}

\section{Cubical Morse pushing}%
\label{sec:cubical-Morse-pushing}

The goal of this~\lcnamecref{sec:cubical-Morse-pushing} is to prove~\cref{thm:main-sublevel-fillings}.
We start by recalling some basic notions of discrete Morse theory due to Bestvina and Brady~\cite{BestvinaBrady1997}
in the context of cube complexes.

\begin{defn}%
    \label{def:discrete-Morse-function}
    Let $X$ be a cube complex.
    A \emph{discrete Morse function} on $X$ is a continuous function $h\colon X\to\bbR$ that is affine on every cube, takes integer values on vertices, and changes by one along every edge.
\end{defn}

The last condition is usually not part of the definition.
However, one can usually arrange a Morse function to satisfy it, so it is no real restriction.
Given a Morse function on $X$, every cube has a unique vertex of maximal height and a unique vertex of minimal height.

\begin{defn}%
    \label{def:sublevel-complex}
    For $a\in\bbZ$, the \emph{sublevel complex} $\sublevel{X}{a}$ is the cubical subcomplex consisting of the cubes whose maximal vertex has height at most $a$.
    The \emph{superlevel complex} $\superlevel{X}{a}$ is the cubical subcomplex consisting of the cubes whose minimal vertex has height at least $a$.
\end{defn}

Throughout this section, let $X$ be a \CAT{0} cube complex equipped with a discrete Morse function $h$.
An edge $a$ adjacent to a vertex $v$ is called \emph{descending at $v$} if $v$ maximizes the value of $h$ on $a$.

\begin{defn}%
    \label{def:descending-link}
    Let $v$ be a vertex of $X$.
    The \emph{descending link} $\lkdown(v)$ is the simplicial complex whose vertices are the descending edges adjacent to $v$.
    A finite collection of these vertices is a simplex if the corresponding edges belong to a common cube of $X$.

    For ease of notation, we identify each vertex $a\in\lkdown(v)^{(0)}$
    with the corresponding descending edge at $v$.
\end{defn}

By Gromov's link condition, the link of every vertex of a \CAT{0} cube complex is flag (see~\cite{Niblo1998}*{Section~2}).
The descending link is the full subcomplex of the vertex link spanned by the descending edges and is therefore flag as well.

\subsection{Link transport and the height grading}

\begin{defn}%
    \label{def:descending-edge-transport}
    Let $v$ be a vertex of $X$.
    For a descending edge $a \in \lkdown(v)^{(0)}$, we denote the other endpoint of $a$ by $v/a$.

    Let $\sigma$ be a simplex of $\lkdown(v)$ and let $a\in\lkdown(v)^{(0)}$ be compatible with $\sigma$, that is, $\vertices{\sigma}\cup\{a\}$ spans a simplex of $\lkdown(v)$.
    For $b\in \vertices{\sigma}\setminus\{a\}$, the edges $a$ and $b$ lie in a square.
    The edge in this square which is opposite to $b$ and incident to $v/a$ is denoted by $b/a$.
    The other endpoint of $b/a$ opposite to $v/a$ has height $h(v)-2$, so $b/a$ is descending at $v/a$.
    Consequently,
    \[
        \sigma/a\defq\{b/a\mid b\in \vertices{\sigma}\setminus\{a\}\}
    \]
    is a simplex of $\lkdown(v/a)$.
    If $\sigma=[a_1,\ldots,a_k]$ is ordered, then
    $[a_1/a,\ldots,a_k/a]$ defines an order on $\sigma/a$, where we omit $a/a$ if it occurs.
    Write $\sigma\setminus a_i$ for the ordered simplex obtained by omitting $a_i$.
    Then
    \[
        (\sigma\setminus a_i)/a_i =[a_1/a_i,\ldots,a_{i-1}/a_i,a_{i+1}/a_i,\ldots,a_k/a_i].
    \]
\end{defn}

\begin{defn}%
    \label{def:oriented-cube-notation}
    Let $\sigma=[a_1,\ldots,a_k]$ be an ordering of the vertices of a simplex of $\lkdown(v)$.
    The corresponding cube has a parametrization by $[0,1]^k$ in which $(1,\ldots,1)$ represents $v$ and the path obtained by decreasing the \Nth{i} coordinate parametrizes the edge $a_i$.
    We give the cube the orientation induced by the standard orientation of $[0,1]^k$ and denote the resulting basis element of $C_k(X)$ by
    $\cubecell{v}{\sigma}=\cubecell{v}{a_1,\ldots,a_k}$.
    Reordering $\sigma$ by an odd permutation changes the sign of this basis element.
    For $k=0$, we put $\cubecell{v}{\emptyset}=v$.
    We extend this notation to tuples of pairwise compatible descending edges by setting $\cubecell{v}{a_1,\ldots,a_k}=0$ whenever an edge occurs more than once.
\end{defn}

\begin{rem}%
    \label{rem:transported-faces}
    Let $\sigma=[a_1,\ldots,a_k]$ be an ordering of the vertices of a simplex of $\lkdown(v)$.
    For every $1\leq i\leq k$, the vertex $a_i$ is compatible with every vertex of $\sigma\setminus a_i$ and does not itself belong to this simplex, and
    $\vertices{\sigma\setminus a_i}\cup\{a_i\}=\vertices{\sigma}$.
    Consequently, $(\sigma\setminus a_i)/a_i$ is a well-defined ordered simplex of $\lkdown(v/a_i)$, possibly the empty simplex, and
    $\cubecell{v/a_i}{(\sigma\setminus a_i)/a_i}$ is a well-defined basis cube of $C_{k-1}(X)$.
    See~\cref{fig:cube-transfer} for an example.
    \begin{figure}
        \centering
        \begin{tikzpicture}[
        3d view={-45}{55},
        line join=round,
        line cap=round,
        font=\small,
    ]

    \def\LX{3.0}
    \def\LY{3.0}
    \def\LZ{3.0}

    \cubeCoordinates

    \begin{scope}[light Red,opacity=0.75]
        \fill[light Red] (O)--(BC)--(C)--(AC)--cycle;

        \fill[light Red] (O)--(AB)--(B)--(BC)--cycle;
    \end{scope}

    \begin{scope}[thick,dashed,every path/.append style={descending edge=0.3}]
        \draw (BC)--(O);
        \draw (B)--(BC);
        \draw (C)--(BC);
    \end{scope}

    \fill[light Red,opacity=1] (O)--(AB)--(A)--(AC)--cycle;

    \begin{scope}[thick,every path/.append style={descending edge}]
        \draw (AB)--(O);
        \draw (A)--(AB);
        \draw (A)--(AC);
        \draw (AC)--(O);
        \draw (B)--(AB);
        \draw (C)--(AC);
        \draw (V)--(A);
        \draw (V)--(B);
        \draw (V)--(C);
    \end{scope}

    \node[above] at (V) {$v$};
    \node[left] at (C) {$v/c$};
    \node[right] at (B) {$v/b$};
    \node[below=2pt] at (A) {$v/a$};

    \path
    (current bounding box.south west) coordinate (bbSW)
    (current bounding box.north east) coordinate (bbNE);

    \pgfresetboundingbox
    \path[use as bounding box] (bbSW) rectangle (bbNE);

    \node[above left] at ($(V)!0.55!(C)$) {$c$};
    \node[above right] at ($(V)!0.55!(B)$) {$b$};
    \node[right] at ($(V)!0.55!(A)$) {$a$};

    \node[below left=-2pt] at ($(A)!0.55!(AB)$) {$b/a$};
    \node[below right=-2pt] at ($(A)!0.55!(AC)$) {$c/a$};

    \node[above right] at ($(B)!0.55!(AB)$) {$a/b$};
    \node[above=2pt] at ($(B)!0.35!(BC)$) {$c/b$};

    \node[above left] at ($(C)!0.55!(AC)$) {$a/c$};
    \node[above=2pt] at ($(C)!0.35!(BC)$) {$b/c$};

    \node[below] at ($(AB)!0.5!(AC)$) {$\cubecell{v/a}{b/a,c/a}$};
    \node[below right,xshift=-8pt,yshift=-3pt] at ($(V)!0.55!(C)$) {$\cubecell{v}{b,c}$};

\end{tikzpicture}

        \caption{%
            \label{fig:cube-transfer}
            \label{fig:cube-bottom-boundary}
            A cube $Q = \cubecell{v}{a,b,c}$. Arrows on edges indicate descending directions.
            The frontmost face is the one opposite $\cubecell{v}{b,c}$.
            One can think of this face as the result of
            moving the face $\cubecell{v}{b,c}$ along the edge $a$.
            The shaded area is $\bottomboundary Q$, and the remaining faces are $\topboundary Q$.
        }
    \end{figure}
\end{rem}

\begin{defn}%
    \label{def:height-grading}
    For $m\in\bbZ$ and $k\geq0$, let $\heightmodule{m}{k}$ be the free submodule of $C_k(X)$ generated by the cubes whose maximal vertex has height equal to $m$.
    Set $\heightmodule{m}{-1}=0$.
    Let $\heightmodule{v}{k}$ be the free submodule generated by the cubes whose maximal vertex is $v$.
    Then
    \[
        C_k(X) = \;\bigoplus_{\mathclap{m\in\bbZ}}\;\heightmodule{m}{k}, \qquad \heightmodule{m}{k}= \;\bigoplus_{\mathclap{h(v)=m}}\;\heightmodule{v}{k}.
    \]
    This makes $C_*(X)$ a bigraded module $\heightmodule{\ast}{\ast}$.

    Define homomorphisms on the basis specified in~\cref{def:oriented-cube-notation} by
    \begin{align*}
        \topboundary\cubecell{v}{\sigma}    &\defq \sum_{i=1}^{k}(-1)^{i-1} \cubecell{v}{\sigma\setminus a_i}, \\
        \bottomboundary\cubecell{v}{\sigma} &\defq \sum_{i=1}^{k}(-1)^i \cubecell{v/a_i}{(\sigma\setminus a_i)/a_i}.
    \end{align*}
\end{defn}

\begin{rem}
    The two summands in~\cref{def:height-grading} collect the faces of $\cubecell{v}{\sigma}$ containing the maximal vertex and the faces opposite to it, respectively.
    Their bidegrees are
    $\topboundary\colon\heightmodule{m}{k}\to\heightmodule{m}{k-1}$ and
    $\bottomboundary\colon\heightmodule{m}{k}\to\heightmodule{m-1}{k-1}$.
\end{rem}

\begin{lem}%
    \label{lem:cubical-boundary-splitting}
    The maps $\topboundary$ and $\bottomboundary$ have the following properties.
    \begin{refenum}
        \item\label{lem:cubical-boundary-splitting:sum}
              $\partial=\topboundary+\bottomboundary$.
        \item\label{lem:cubical-boundary-splitting:squares}
              $(\topboundary)^2=(\bottomboundary)^2=0$.
        \item\label{lem:cubical-boundary-splitting:mixed}
              $\topboundary\bottomboundary+\bottomboundary\topboundary=0$.
        \item\label{lem:cubical-boundary-splitting:norm}
              $\norm{\topboundary x} \leq k \norm{x}$ and $\norm{\bottomboundary x} \leq k \norm{x}$ for every $x \in \heightmodule{m}{k}$.
    \end{refenum}
\end{lem}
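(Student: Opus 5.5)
The plan is to verify everything on a single basis cube $Q=\cubecell{v}{\sigma}$ with $\sigma=[a_1,\ldots,a_k]$. Identify $Q$ with $[0,1]^k$ as in \cref{def:oriented-cube-notation}, so that $v$ corresponds to $(1,\ldots,1)$ and the edge $a_i$ is traversed by decreasing the \Nth{i} coordinate.

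\textbf{Step 1: the sum formula.} The standard cubical boundary of $[0,1]^k$ is
\[
    \partial[0,1]^k=\sum_{i}(-1)^{i}\bigl(F_i^0-F_i^1\bigr),
\]
where $F_i^\epsilon$ is the face $\{x_i=\epsilon\}$ with its induced orientation. The face $F_i^1$ contains $v$ and, with the inherited coordinates, is exactly $\cubecell{v}{\sigma\setminus a_i}$; it appears with sign $(-1)^{i-1}$. The face $F_i^0$ has maximal vertex $v/a_i$, and its remaining coordinate directions are the edges $a_j/a_i$, in the same order. So it equals $\cubecell{v/a_i}{(\sigma\setminus a_i)/a_i}$ with sign $(-1)^i$. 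Comparing with \cref{def:height-grading} gives (a). The only thing to check here is that the parametrization of the opposite face by the transported edges matches the inherited coordinate orientation, which holds because translating along $a_i$ preserves the other coordinate directions.

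\textbf{Step 2: the square identities.} Since $\partial^2=0$, (a) gives
\[
    0=(\topboundary)^2+\bigl(\topboundary\bottomboundary+\bottomboundary\topboundary\bigr)+(\bottomboundary)^2 .
\]
These three pieces have bidegrees $(0,-2)$, $(-1,-2)$ and $(-2,-2)$ respectively. Because $C_*(X)$ is a direct sum over heights, each piece must vanish separately, which proves (b) and (c).

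As a cross-check, $(\topboundary)^2=0$ can also be seen directly: it is the usual simplicial sign cancellation, since $\topboundary$ on the $v$-summand is the simplicial boundary of $\lkdown(v)$ shifted in degree.

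\textbf{Step 3: the norm bound.} Each of $\topboundary Q$ and $\bottomboundary Q$ is a sum of $k$ basis elements with coefficients $\pm1$, so each has norm at most $k$. The triangle inequality then extends this to $\norm{\topboundary x}\le k\norm{x}$ and $\norm{\bottomboundary x}\le k\norm{x}$ for every $x\in\heightmodule{m}{k}$, which is (d).

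I expect the main obstacle to be the sign and orientation bookkeeping in Step 1. Everything after that is formal.
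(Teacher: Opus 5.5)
Your proof is correct and follows the paper's route. Parts (a) and (d) are read off from the face structure of the cube, and parts (b) and (c) come from splitting $\partial^2=0$ according to whether a term preserves the height of the maximal vertex, lowers it by one, or lowers it by two. Your Step~1 just spells out the orientation check that the paper treats as clear from the definitions.
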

\begin{proof}
    It is clear from the definition of $\topboundary$ and $\bottomboundary$ that~\localref{lem:cubical-boundary-splitting:sum,lem:cubical-boundary-splitting:norm} hold.
    The terms of $\partial^2 = (\topboundary + \bottomboundary)^2$ which preserve the height of the maximal vertex give $(\topboundary)^2=0$, while those which lower it by two give $(\bottomboundary)^2=0$, so~\localref{lem:cubical-boundary-splitting:squares} holds.
    The remaining terms lower the height by one and give~\localref{lem:cubical-boundary-splitting:mixed}.
\end{proof}

\begin{lem}%
    \label{lem:link-cube-map}
    The maps $\linkshift_{v,k}\colon\augm{C}_{k-1}(\lkdown(v))\longrightarrow\heightmodule{v}{k}$
    sending a simplex $\sigma$ to $\cubecell{v}{\sigma}$
    are isometric isomorphisms with respect to the $\ell^1$\=/norm and satisfy
    $\linkshift_{v,k-1}\partial=\topboundary\linkshift_{v,k}$ for $k\geq1$.
\end{lem}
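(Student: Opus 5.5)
The plan is to check the three claims (well-definedness, isometric isomorphism, and the chain identity) directly on basis elements. Everything reduces to the correspondence between simplices of $\lkdown(v)$ and cubes whose maximal vertex is $v$.

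\emph{Bijection on bases.} First I show that $\sigma\mapsto\cubecell{v}{\sigma}$ induces a bijection between the $(k-1)$\=/simplices of $\lkdown(v)$ and the $k$\=/cubes with maximal vertex $v$, for $k\geq1$. Every simplex of $\lkdown(v)$ is a set of descending edges at $v$ contained in a common cube. In a \CAT{0} cube complex, a set of $k$ edges at $v$ lying in a common cube spans a unique $k$\=/dimensional face containing $v$. All $k$ edges of that face at $v$ are descending at $v$. Because $h$ is affine on the cube and changes by one along every edge, $v$ is then its maximal vertex.

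Conversely, if a $k$\=/cube has maximal vertex $v$, its $k$ edges at $v$ are descending at $v$ and span a simplex of $\lkdown(v)$. These two assignments are mutually inverse. Uniqueness of the cube spanned by given edges at $v$ follows from $X$ being \CAT{0}, since the vertex link is then a simplicial complex.

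For $k=0$, we have $\augm{C}_{-1}=\bbZ$ generated by the empty simplex, which is sent to $v=\cubecell{v}{\emptyset}$, and $\heightmodule{v}{0}=\bbZ v$.

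\emph{Orientations and norms.} By \cref{def:oriented-cube-notation}, an odd reordering of $\sigma$ changes the sign of both $\sigma$ and $\cubecell{v}{\sigma}$. Hence $\linkshift_{v,k}$ is well defined on oriented simplices. It maps a basis bijectively onto a basis up to sign, so it is an isomorphism that preserves the $\ell^1$\=/norm.

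\emph{Chain identity.} For $\sigma=[a_1,\ldots,a_k]$ with $k\geq1$, the simplicial boundary in the augmented complex is $\partial\sigma=\sum_{i}(-1)^{i-1}(\sigma\setminus a_i)$. This convention is fixed by requiring $\partial[a]=1$, which matches $\partial\conemap{x}+\conemap{x}\partial=\id$. Applying $\linkshift_{v,k-1}$ term by term gives $\sum_i(-1)^{i-1}\cubecell{v}{\sigma\setminus a_i}$. By \cref{def:height-grading}, this is exactly $\topboundary\cubecell{v}{\sigma}=\topboundary\linkshift_{v,k}(\sigma)$.

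The case $k=1$ is consistent: $\partial[a]=1$ maps to $v=\cubecell{v}{\emptyset}$, and $\topboundary\cubecell{v}{a}=\cubecell{v}{\emptyset}=v$.

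\emph{Main obstacle.} The only delicate point is sign bookkeeping. I must confirm that the sign $(-1)^{i-1}$ in the definition of $\topboundary$ matches the augmented simplicial boundary convention, including the identification of $\augm{C}_{-1}$ with $\heightmodule{v}{0}$. Since $\topboundary$ was defined with exactly this sign, the identity holds by construction once those conventions are fixed.
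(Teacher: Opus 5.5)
Your proposal is correct and follows the paper's argument. The paper also reads off the basis correspondence from the definitions of $\lkdown(v)$ and $\cubecell{v}{\sigma}$, and notes that $\linkshift_{v,k-1}\partial=\topboundary\linkshift_{v,k}$ is just the defining formula of $\topboundary$; you additionally spell out the details the paper leaves implicit (uniqueness of the spanned cube, maximality of $v$ via affineness of $h$, and the sign convention in degree $-1$).
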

\begin{proof}
    The basis correspondence follows from~\cref{def:descending-link,def:oriented-cube-notation}.
    The boundary identity is the formula defining $\topboundary$.
\end{proof}

\begin{cor}%
    \label{cor:descending-link-chain-identification}
    The homomorphisms $(-1)^k\linkshift_{v,k}$ define an isometric isomorphism of chain complexes
    \(\suspension{\augm{C}_*(\lkdown(v))} \longrightarrow (\heightmodule{v}{\ast},\topboundary)\).
\end{cor}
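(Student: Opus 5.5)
The corollary follows from \cref{lem:link-cube-map} together with a sign check. Define $\Phi_k\defq(-1)^k\linkshift_{v,k}$ on $(\suspension{\augm{C}_*(\lkdown(v))})_k=\augm{C}_{k-1}(\lkdown(v))$ for $k\geq0$. Since $\heightmodule{v}{-1}=0$ and $\augm{C}_{-2}=0$, both complexes vanish in negative degrees, so no maps are needed there. Each $\linkshift_{v,k}$ is an isometric isomorphism by \cref{lem:link-cube-map}, and multiplying by a sign preserves both properties. It therefore remains only to check that the family $(\Phi_k)$ is a chain map.

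We need $\topboundary\Phi_k=\Phi_{k-1}\partial^{\suspension{}}_k$ for $k\geq1$. By the definition of the suspension, $\partial^{\suspension{}}_k=-\partial_{k-1}$, where $\partial_{k-1}\colon\augm{C}_{k-1}(\lkdown(v))\to\augm{C}_{k-2}(\lkdown(v))$ is the augmented simplicial boundary. The right-hand side is then
\[
    \Phi_{k-1}\partial^{\suspension{}}_k=(-1)^{k-1}\linkshift_{v,k-1}(-\partial_{k-1})=(-1)^k\linkshift_{v,k-1}\partial_{k-1}.
\]
By \cref{lem:link-cube-map}, this equals $(-1)^k\topboundary\linkshift_{v,k}=\topboundary\Phi_k$, as required.

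The only point requiring attention is the low-degree case $k=1$, where the augmentation enters. Here $\augm{C}_0\to\augm{C}_{-1}=\bbZ$ sends $[a]\mapsto1$, and $\linkshift_{v,0}$ sends the generator $1$, viewed as the empty simplex, to $\cubecell{v}{\emptyset}=v$. The identity $\topboundary\cubecell{v}{a}=\cubecell{v}{\emptyset}$ from \cref{def:height-grading} confirms that \cref{lem:link-cube-map} indeed covers this degree. The main obstacle is thus just this bookkeeping of signs and of the augmented degree; there is no substantive difficulty.
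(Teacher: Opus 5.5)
Your proof is correct and matches the paper's approach. The paper gives no separate proof and treats the corollary as an immediate consequence of \cref{lem:link-cube-map}; the only content is the sign check $(-1)^{k-1}\cdot(-1)=(-1)^k$ compensating for the suspension differential, which you carry out, together with the bookkeeping in the augmented degree.
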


\subsection{Strong uniform density and large overlap}

Every edge $e$ in a \CAT{0} cube complex defines a \emph{hyperplane} in $X$.
The defining equivalence relation on edges is generated by declaring that $e \sim e'$ whenever $e$ and $e'$ are
opposite edges of a square in $X$.
The hyperplane $\dualhyperplane{e}$ dual to $e$ is spanned
by the midcubes whose corresponding edges in $X$ are equivalent to $e$.
The subcomplex spanned by the cubes in $X$ that contain a midcube belonging to $\dualhyperplane{e}$ is called the \emph{carrier} of $\dualhyperplane{e}$.
For further information on hyperplanes in \CAT{0} cube complexes, see~\cite{Niblo1998} and~\cite{Sageev1995}.

\begin{lem}%
    \label{lem:hyperplane-transport}
    The following statements hold.
    \begin{refenum}
        \item\label{lem:hyperplane-transport:injective}
              Distinct edges incident to the same vertex have distinct dual hyperplanes.
        \item\label{lem:hyperplane-transport:transport}
              Let $a$ be a descending edge at $v$ and put $w=v/a$.
              If $e_v$ and $e_w$ are descending edges at $v$ and $w$, respectively, then
              $\dualhyperplane{e_v}=\dualhyperplane{e_w}$ if and only if
              $e_v$ is distinct from and compatible with $a$, and $e_w=e_v/a$.
    \end{refenum}
\end{lem}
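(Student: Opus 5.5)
Both parts rest on standard facts about hyperplanes in \CAT{0} cube complexes (see~\cite{Sageev1995,Niblo1998}): hyperplanes are embedded and two-sided; the carrier of a hyperplane is isomorphic to $\mathcal{H}\times[0,1]$; and a hyperplane separates $X$ into two halfspaces. We also use that the combinatorial distance between two vertices equals the number of hyperplanes separating them.

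For~\localref{lem:hyperplane-transport:injective}, let $e,e'$ be distinct edges at a vertex $u$ with $\dualhyperplane{e}=\dualhyperplane{e'}$, with other endpoints $u_1,u_2$. Since the hyperplane separates $u$ from both $u_1$ and $u_2$, the vertices $u_1,u_2$ lie in the same halfspace. Every other hyperplane separates neither $u$ from $u_1$ nor $u$ from $u_2$. Hence no hyperplane separates $u_1$ from $u_2$, so $u_1=u_2$. This gives a bigon of distinct edges between $u$ and $u_1$, which is impossible in a \CAT{0} cube complex, where edges are determined by their endpoints. Alternatively, the two edges are dual to the same hyperplane, so they are not in a common square; the path $e^{-1}e'$ crosses the hyperplane twice and thus is not geodesic, yet it has length two between distinct vertices, a contradiction.

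For~\localref{lem:hyperplane-transport:transport}, the ``if'' direction is immediate: if $e_v$ is distinct from and compatible with $a$, then $e_v$ and $a$ span a square, and $e_v/a$ is the opposite edge of $e_v$ in that square, so by definition $e_v\sim e_v/a$. For ``only if'', suppose $\dualhyperplane{e_v}=\dualhyperplane{e_w}=:\mathcal{H}$. First, $e_v\neq a$, because $a$ is incident to $w$ but is ascending at $w$, while $e_w$ is descending at $w$. Applying~\localref{lem:hyperplane-transport:injective} at $w$ then gives $\mathcal{H}\neq\dualhyperplane{a}$. Now $v$ and $w$ both lie in the carrier of $\mathcal{H}$, and they are joined by the edge $a$, which is not dual to $\mathcal{H}$. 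Using the product structure of the carrier, $a$ lies in the carrier as an edge $a'\times\{t\}$, and $a\times[0,1]$ is a square of the carrier containing $a$ together with the edges of the carrier at $v$ and at $w$ dual to $\mathcal{H}$. Those edges are $e_v$ and $e_w$ by~\localref{lem:hyperplane-transport:injective}. Hence $e_v$ and $a$ span a square, so $e_v$ is compatible with $a$, and the edge of that square opposite $e_v$ at $w$ is $e_v/a$. By uniqueness in~\localref{lem:hyperplane-transport:injective} at $w$, it equals $e_w$.

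The main care point is the carrier step. We need the fact that an edge with both endpoints in the carrier, and not dual to $\mathcal{H}$, lies in the carrier and in fact in one of its two boundary copies. This uses convexity of the carrier (or of the halfspace-intersected carrier boundary), which is standard. Along the way we must also check that the square's edge at $w$ has the right descending orientation: its other endpoint has height $h(v)-2$, as in~\cref{def:descending-edge-transport}. This is automatic here.
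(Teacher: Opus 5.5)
Your proof is correct, and part (ii) is essentially the paper's argument. You exclude $a=e_w$ using the ascending/descending asymmetry at $w$, get $\dualhyperplane{a}\neq\mathcal{H}$ from (i), and then use convexity and the product structure of the carrier to produce a square whose opposite edges are $e_v$ and $e_w$. Two small slips here: the reason you give for ``$e_v\neq a$'' actually proves $a\neq e_w$, so $e_v\neq a$ only follows once (i) has given $\dualhyperplane{a}\neq\mathcal{H}$; and the square you mean is $a'\times[0,1]$, not $a\times[0,1]$.

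Part (i) is where you genuinely differ. The paper cites that the carrier is a product subcomplex containing exactly one dual edge at each of its vertices. That gives (i) at once, and the same structural fact is reused in (ii). You instead argue with halfspaces. Only $\mathcal{H}$ separates $u$ from $u_1$ or from $u_2$, so no hyperplane separates $u_1$ from $u_2$. The distance formula then gives $u_1=u_2$, and the resulting bigon is excluded because the $1$-skeleton of a \CAT{0} cube complex has no multiple edges. This is valid and stays entirely within the wall-space picture. However, it relies on two further standard inputs, the distance formula and the absence of bigons, whereas the paper's single fact about carriers serves both parts.

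Your ``alternative'' argument for (i) does not work as written. A non-geodesic path of length two between distinct vertices only yields $d(u_1,u_2)\leq 1$, which is not a contradiction without bipartiteness of the $1$-skeleton or the separation count above. It also silently assumes $u_1\neq u_2$, which is exactly the bigon case. Drop it and keep the first argument.
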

\begin{proof}
    Ad~\localref{lem:hyperplane-transport:injective}.
    The carrier of a hyperplane is a convex product subcomplex and contains a unique edge dual to that hyperplane at each of its vertices (see~\cite{Sageev1995}*{Theorems~4.10 and~4.13}).

    Ad~\localref{lem:hyperplane-transport:transport}.
    For the reverse implication, the edges $e_v$ and $e_v/a$ are opposite edges of a square and are therefore dual to the same hyperplane.
    Conversely, suppose that $\dualhyperplane{e_v}=\dualhyperplane{e_w}$.
    The edge $a$ is not descending at $w$, whereas $e_w$ is descending there, so $a \neq e_w$.
    By~\localref{lem:hyperplane-transport:injective}, their dual hyperplanes are distinct.
    Both endpoints of $a$ lie in the carrier of $\dualhyperplane{e_v}$.
    Convexity and the product structure of the carrier supply a square in which $e_v$ and $e_w$ are opposite edges.
    Hence, $e_v$ is distinct from and compatible with $a$, and $e_w=e_v/a$.
\end{proof}

\begin{defn}[Strong uniform density of descending links]%
    \label{def:strong-uniform-density-descending-links}
    Let $a_0$ be an integer.
    The descending links of $X$ satisfy \emph{strong uniform density (above height $a_0$)} if there exist positive integers $N_m,b_m$ for $m\geq a_0$ with
    \begin{condrefenum}
        \item\label{eq:strong-uniform-density-asymptotic}
              $\lim_{m \to \infty} b_m/N_m = 0$
    \end{condrefenum}
    and for every vertex $v \in \superlevel{X}{a_0}^{(0)}$, there is a finite set
    $\densityset{v}\subseteq\lkdown(v)^{(0)}$ satisfying the following property.
    \begin{condrefenum}[resume]
        \item\label{def:strong-uniform-density-descending-links:density}
              If $h(v) = m$, then $\size{\densityset{v}}\geq N_m$ and
              $\densityset{v}$ is $b_m$\=/dense in $\lkdown(v)$.
    \end{condrefenum}
\end{defn}

Whenever the descending links satisfy strong uniform density, we fix sequences $(N_m)_{m \geq a_0}$ and $(b_m)_{m \geq a_0}$, and one family $(\densityset{v})_{v\in \superlevel{X}{a_0}^{(0)}}$.

\begin{defn}[Large overlap condition]%
    \label{def:large-overlap-descending-links}
    Suppose that the descending links of $X$ satisfy strong uniform density above height $a_0$.
    Let $\densityhyperplanes{v} \defq \{\dualhyperplane{e}\mid e\in\densityset{v}\}$.

    The descending links of $X$ satisfy the \emph{large overlap condition} if there exists a constant $\overlapconstant>0$ such that, whenever
    $v, w \in \superlevel{X}{a_0}^{(0)}$ are the endpoints of an edge of $X$ and $m=\max\{h(v),h(w)\}$, one has
    \[
        \frac{\size{\densityhyperplanes{v}\symdiff\densityhyperplanes{w}}}
        {\size{\densityhyperplanes{v}\cup\densityhyperplanes{w}}}
        \leq\frac{\overlapconstant}{m},
    \]
    where $\symdiff$ denotes the symmetric difference of sets.
\end{defn}

Whenever the large overlap condition holds, we fix a corresponding constant $\overlapconstant$.
If the choice of Morse function is clear from context, we also say
that $X$ satisfies the large overlap condition.

\begin{rem}%
    \label{rem:descending-links-acyclic}
    Strong uniform density together with~\cref{cor:quantitative-density-contraction} implies that
    for every $p$, there exists some $m_p$ such that all descending links $\lkdown(v)$ with $h(v) \geq m_p$
    are uniformly $p$\=/acyclic.

    The inequality in the large overlap condition can be reformulated
    as
    \[
        \frac{\size{\densityhyperplanes{v} \cap \densityhyperplanes{w}}}
        {\size{\densityhyperplanes{v}\cup\densityhyperplanes{w}}}
        \geq 1 - \frac{\overlapconstant}{m},
    \]
    which is where the name comes from.
    We choose the alternative formulation in~\cref{def:large-overlap-descending-links}, since this is how it is used in the proof of~\cref{thm:main-sublevel-fillings}.
\end{rem}

The following criterion is often convenient for verifying strong uniform density and the large overlap condition.

\begin{prop}%
    \label{prop:full-descending-link-density-criterion}
    Let $a_0$ be an integer.
    Suppose that, for every $m\geq a_0$, the descending links at vertices of height $m$ all have the same finite number of vertices $N_m > 0$.
    Assume that there exist a constant $\altoverlapconst>0$ and positive integers $b_m$ for $m\geq a_0$ such that
    \begin{condrefenum}
        \item\label{prop:full-descending-link-density-criterion:rate}
              \(\displaystyle
              \frac{b_m+\abs{N_m-N_{m-1}}}{N_m}
              \leq\frac{\altoverlapconst}{m}
              \)
              if $m>a_0$;
        \item\label{prop:full-descending-link-density-criterion:density}
              $\lkdown(v)$ is uniformly $b_{m}$\=/dense if $h(v) = m \geq a_0$.
    \end{condrefenum}
    Then the descending links of $X$ satisfy strong uniform density and the large overlap condition above height $a_0$
    with $\densityset{v}\defq\lkdown(v)^{(0)}$.
\end{prop}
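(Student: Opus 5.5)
With $\densityset{v}\defq\lkdown(v)^{(0)}$ for every $v$ of height $m \ge a_0$, the proof splits into strong uniform density and large overlap.

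\emph{Strong uniform density.} We have $\size{\densityset{v}}=N_m$. By \localref{prop:full-descending-link-density-criterion:density}, $\densityset{v}$ is $b_m$\=/dense. For the asymptotic condition, \localref{prop:full-descending-link-density-criterion:rate} gives $b_m/N_m\le\altoverlapconst/m\to0$ for $m>a_0$. This settles strong uniform density above $a_0$.

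\emph{Large overlap: setup.} Let $v,w\in\superlevel{X}{a_0}^{(0)}$ span an edge $a$. After relabelling we may assume $h(v)=m$ and $h(w)=m-1$, so $m>a_0$ and $a$ is descending at $v$ with $w=v/a$. By~\cref{lem:hyperplane-transport:injective}, $e\mapsto\dualhyperplane{e}$ is injective on edges at a fixed vertex. Hence $\size{\densityhyperplanes{v}}=N_m$ and $\size{\densityhyperplanes{w}}=N_{m-1}$. By~\cref{lem:hyperplane-transport:transport}, the common hyperplanes $\densityhyperplanes{v}\cap\densityhyperplanes{w}$ are exactly $\dualhyperplane{e}$ for $e\in\lkdown(v)^{(0)}$ with $e\ne a$ and $e$ compatible with $a$; the corresponding edge at $w$ is $e/a$. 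Because $\lkdown(v)$ is uniformly $b_m$\=/dense, $a$ is incompatible with at most $b_m$ vertices. Therefore
\[
\size{\densityhyperplanes{v}\cap\densityhyperplanes{w}}\ge N_m-1-b_m .
\]

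\emph{Large overlap: counting.} Write $I$ for the intersection. Then the symmetric difference has size $N_m+N_{m-1}-2\size{I}$. We have $N_{m-1}\le N_m+\abs{N_m-N_{m-1}}$, and the union has size at least $N_m$. Combining,
\[
\frac{\size{\densityhyperplanes{v}\symdiff\densityhyperplanes{w}}}{\size{\densityhyperplanes{v}\cup\densityhyperplanes{w}}}
\le\frac{2+2b_m+\abs{N_m-N_{m-1}}}{N_m}.
\]
By \localref{prop:full-descending-link-density-criterion:rate}, $(2b_m+\abs{N_m-N_{m-1}})/N_m\le2\altoverlapconst/m$.

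\emph{Main obstacle: the additive $1$.} The remaining term $2/N_m$ comes from the edge $a$ itself, which must be shown to be $O(1/m)$. Since $b_m\ge1$ is a positive integer, \localref{prop:full-descending-link-density-criterion:rate} gives $1/N_m\le b_m/N_m\le\altoverlapconst/m$. So $\overlapconstant\defq4\altoverlapconst$ works for all edges with $m>a_0$, and this is the only case that can occur.
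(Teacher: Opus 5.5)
Your proof is correct and follows the paper's argument essentially step for step: the same count $\size{\densityhyperplanes{v}\cap\densityhyperplanes{w}}\geq N_m-b_m-1$ via~\cref{lem:hyperplane-transport}, the same symmetric-difference identity, and the same use of $b_m\geq1$ to absorb the additive constant, giving $\overlapconstant=4\altoverlapconst$. The only cosmetic difference is that you bound $2/N_m$ separately by $2b_m/N_m$, whereas the paper folds it into the single estimate $\abs{N_m-N_{m-1}}+2b_m+2\leq4(b_m+\abs{N_m-N_{m-1}})$.
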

\begin{proof}
    Condition~\localref{prop:full-descending-link-density-criterion:density} is~\cref{def:strong-uniform-density-descending-links:density} for $\densityset{v}\defq\lkdown(v)^{(0)}$.
    Moreover,~\localref{prop:full-descending-link-density-criterion:rate} gives
    $b_m/N_m\leq\altoverlapconst/m$, and hence $\lim_{m \to \infty} b_m/N_m \leq \lim_{m \to \infty} \frac{\altoverlapconst}{m} = 0$, so~\cref{eq:strong-uniform-density-asymptotic} holds.
    It remains to verify the large overlap condition.

    Let $v, w \in \superlevel{X}{a_0}^{(0)}$ satisfy $h(v)=m>a_0$, and assume that $w=v/a$ for some $a\in\lkdown(v)^{(0)}$.
    By~\localref{prop:full-descending-link-density-criterion:density}, all but at most $b_m+1$ descending edges at $v$ are distinct from and compatible with $a$.
    For every such edge $e$, one has $e/a\in\lkdown(w)^{(0)}$ and
    $\dualhyperplane{e}=\dualhyperplane{e/a}$ by~\cref{def:descending-edge-transport,lem:hyperplane-transport:transport}.
    By~\cref{lem:hyperplane-transport:injective}, these hyperplanes are pairwise distinct.
    Hence,
    \[
        \size{\densityhyperplanes{v}\cap\densityhyperplanes{w}}
        \geq N_m-b_m-1.
    \]
    Since~\cref{lem:hyperplane-transport:injective} also gives
    $\size{\densityhyperplanes{v}}=N_m$ and
    $\size{\densityhyperplanes{w}}=N_{m-1}$, we obtain
    \begin{align*}
        \size{\densityhyperplanes{v}\symdiff\densityhyperplanes{w}}
         &=N_m+N_{m-1}-2\size{\densityhyperplanes{v}\cap\densityhyperplanes{w}} \\
         &\leq N_{m-1}-N_m+2b_m+2 \\
         &\leq\abs{N_m-N_{m-1}}+2b_m+2 \\
         &\leq 4(b_m+\abs{N_m-N_{m-1}}).
    \end{align*}
    Here the last inequality uses $b_m\geq1$.
    Moreover, $\size{\densityhyperplanes{v}\cup\densityhyperplanes{w}}\geq N_m$, so
    \[
        \frac{\size{\densityhyperplanes{v}\symdiff\densityhyperplanes{w}}}
        {\size{\densityhyperplanes{v}\cup\densityhyperplanes{w}}}
        \leq \frac{4(b_m+\abs{N_m-N_{m-1}})}{N_m}
        \leq \frac{4\altoverlapconst}{m}.
    \]
    This proves the large overlap condition with $\overlapconstant = 4\altoverlapconst$.
\end{proof}

\begin{defn}%
    \label{def:lifted-density-contraction}
    Let $p\geq0$, let $v\in X^{(0)}$, let $b\in\bbN_0$, and let
    $E\subseteq\lkdown(v)^{(0)}$ be $b$\=/dense in $\lkdown(v)$ and $(R_p,b)$\=/good.
    For $e\in E$, let $F_{e,\ast}$ be the contraction of $\lkdown(v)$ from~\cref{def:density-contraction}.
    We define
    \[
        \linkcontraction_{v,e,k} \defq \linkshift_{v,k+1}\circ F_{e,k-1}\circ\linkshift_{v,k}^{-1} \colon\heightmodule{v}{k}\longrightarrow\heightmodule{v}{k+1}
    \]
    for $0\leq k\leq p+1$.
    We also set $\linkcontraction_{v,e,-1}=0$.
\end{defn}

\begin{lem}%
    \label{lem:lifted-density-contraction-bounds}
    Fix $p\geq0$ and suppose that the descending links satisfy strong uniform density above height $a_0$.
    There exist an integer $a_p>a_0$ and a constant $M_p\geq1$ with the following property.
    For every vertex $v$ of height $m\geq a_p$ and every $e\in \densityset{v}$, we have
    \begin{refenum}
        \item\label{lem:lifted-density-contraction-bounds:goodness}
              $\densityset{v}$ is $b_m$\=/dense in $\lkdown(v)$ and $(R_p,b_m)$\=/good;
        \item\label{lem:lifted-density-contraction-bounds:contraction}
              $\topboundary\linkcontraction_{v,e,k}+\linkcontraction_{v,e,k-1}\topboundary=\id$ on $\heightmodule{v}{k}$ for every $0\leq k\leq p+1$;
        \item\label{lem:lifted-density-contraction-bounds:norm}
              $\norm{\linkcontraction_{v,e,k}(Q)}\leq M_p$
              for every $0\leq k\leq p+1$
              and every basis cube $Q\in\heightmodule{v}{k}$;
        \item\label{lem:lifted-density-contraction-bounds:cone}
              $\linkcontraction_{v,e,k}(\cubecell{v}{\sigma}) = \cubecell{v}{e,\sigma}$
              for every $0\leq k\leq p+1$
              and every $(k-1)$\=/simplex $\sigma$ which is compatible with $e$.
    \end{refenum}
\end{lem}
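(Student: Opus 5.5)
The plan is to pick $a_p$ from the asymptotic condition~\cref{eq:strong-uniform-density-asymptotic}, and then transport the contraction of~\cref{cor:quantitative-density-contraction} through the isometric isomorphisms $\linkshift_{v,k}$ of~\cref{lem:link-cube-map}. Since $b_m/N_m\to0$, there is $a_p>a_0$ with $b_m/N_m<1/R_p$ for all $m\geq a_p$. For $h(v)=m\geq a_p$ we then have $\size{\densityset{v}}\geq N_m>R_pb_m$. Together with the $b_m$\=/density from~\cref{def:strong-uniform-density-descending-links:density}, this gives~\localref{lem:lifted-density-contraction-bounds:goodness}. In particular, the hypotheses of~\cref{def:density-contraction} and~\cref{cor:quantitative-density-contraction} hold for $L=\lkdown(v)$ and $E=\densityset{v}$, so $\linkcontraction_{v,e,k}$ is defined for $0\leq k\leq p+1$.

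For~\localref{lem:lifted-density-contraction-bounds:contraction}, I conjugate the identity $\partial F_{e,j}+F_{e,j-1}\partial=\id$ with $j=k-1$ by $\linkshift$. By~\cref{lem:link-cube-map}, $\topboundary\linkshift_{v,k+1}=\linkshift_{v,k}\partial$ and $\topboundary\linkshift_{v,k}=\linkshift_{v,k-1}\partial$. Hence
\[
\topboundary\linkcontraction_{v,e,k}+\linkcontraction_{v,e,k-1}\topboundary
=\linkshift_{v,k}\bigl(\partial F_{e,k-1}+F_{e,k-2}\partial\bigr)\linkshift_{v,k}^{-1}=\id
\]
for $k\geq1$, with the convention $F_{e,-2}=0$. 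The case $k=0$ needs care. Here $\heightmodule{v}{0}$ is spanned by $v=\cubecell{v}{\emptyset}$, which corresponds to the augmentation generator $1\in\augm{C}_{-1}$, and $\linkcontraction_{v,e,-1}=0$. The identity then reads $\topboundary\cubecell{v}{e}=v$, which holds because $\topboundary\cubecell{v}{e}=\cubecell{v}{\emptyset}$. This is consistent with $\partial F_{e,-1}(1)=\partial[e]=1$.

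The main point to check is that the signs and indices in the suspension identification of~\cref{cor:descending-link-chain-identification} match. I will not use the $(-1)^k$ twist, because the definition of $\linkcontraction$ uses plain $\linkshift$. The relevant identity is therefore the unsigned one in~\cref{lem:link-cube-map}, which suffices as shown above.

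For~\localref{lem:lifted-density-contraction-bounds:norm}, the map $\linkshift$ is an isometry, so $\norm{\linkcontraction_{v,e,k}(Q)}=\norm{F_{e,k-1}(\sigma)}\leq K_{k-1}+1$ by~\cref{cor:quantitative-density-contraction:basis-norm}. Taking $M_p=1+\max_{-1\leq j\leq p}K_j$ works, and this bound is independent of $v$, $m$ and $e$. Finally,~\localref{lem:lifted-density-contraction-bounds:cone} follows from~\cref{cor:quantitative-density-contraction:cone-formula}. If $\sigma$ is compatible with $e$, then $F_{e,k-1}(\sigma)=e*\sigma=[e,\sigma]$, and applying $\linkshift_{v,k+1}$ gives $\cubecell{v}{e,\sigma}$. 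This is zero if $e\in\sigma$, in agreement with the convention of~\cref{def:oriented-cube-notation}.
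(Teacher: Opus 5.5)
Your proposal is correct and follows the paper's proof. You choose $a_p$ from $b_m/N_m\to0$ so that $R_pb_m<N_m$, then transport the contraction identity, the norm bound and the cone formula of \cref{cor:quantitative-density-contraction} through the isometries $\linkshift_{v,k}$ with $M_p=1+\max_j K_j$; your explicit check of the unsigned intertwining relation and of the case $k=0$ only spells out details the paper leaves implicit.
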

\begin{proof}
    By~\cref{eq:strong-uniform-density-asymptotic},
    $\lim_{m \to \infty} b_m/N_m = 0$, so
    we can find an integer $a_p>a_0$ such that
    $R_pb_m<N_m$ whenever $m\geq a_p$.
    Let $v$ have height $m\geq a_p$.
    By~\cref{def:strong-uniform-density-descending-links:density}, the set $\densityset{v}$ is $b_m$\=/dense in $\lkdown(v)$ and $\size{\densityset{v}}\geq N_m>R_pb_m$.
    Thus, $\densityset{v}$ is $(R_p,b_m)$\=/good.

    Under the maps $\linkshift_{v,k}$,~\localref{lem:lifted-density-contraction-bounds:contraction} follows from~\cref{cor:quantitative-density-contraction:contraction}.
    Statements~\localref{lem:lifted-density-contraction-bounds:norm} and~\localref{lem:lifted-density-contraction-bounds:cone} follow from~\cref{cor:quantitative-density-contraction:basis-norm,cor:quantitative-density-contraction:cone-formula}.
    The constants $K_{-1},\ldots,K_p$ from~\cref{lem:quantitative-density-homotopy} are independent of $v$ and $e$, so $M_p \defq \max_{j} K_j + 1$ gives a suitable value of $M_p$.
\end{proof}

\begin{defn}[Descending choices and levelwise contractions]%
    \label{def:descending-choice}
    Suppose that the descending links satisfy strong uniform density above height $a_0$.
    Let $W \subseteq \superlevel{X}{a_0}^{(0)}$.
    A \emph{descending choice on $W$} is a family $\descChoice=(\descChoice(v))_{v\in W}$ with $\descChoice(v)\in\densityset{v}$ for every $v\in W$.
    Its \emph{associated hyperplane at $v$} is
    $\associatedhyperplane{\descChoice}{v}\defq\dualhyperplane{\descChoice(v)}$.

    Let $p\geq0$, and let $a_p>a_0$ be a height threshold supplied by~\cref{lem:lifted-density-contraction-bounds}.
    Suppose that $r\geq a_p$.
    For every $v\in W$ of height $r$, form the contractions $\linkcontraction_{v,\descChoice(v),k}$ using $\densityset{v}$.
    The \emph{levelwise contraction at height $r$ associated with $\descChoice$} is
    \[
        \linkcontraction^{\descChoice}_{r,k} \defq \;\bigoplus_{\mathclap{\substack{v\in W\\h(v)=r}}}\linkcontraction_{v,\descChoice(v),k} \colon \;\bigoplus_{\mathclap{\substack{v\in W\\h(v)=r}}} \heightmodule{v}{k} \longrightarrow\heightmodule{r}{k+1},
    \]
    for $0\leq k\leq p+1$.
\end{defn}

\begin{rem}%
    \label{rem:levelwise-contraction}
    For every descending choice $\descChoice$,~\cref{lem:lifted-density-contraction-bounds:contraction} gives
    \(
    \topboundary\linkcontraction^{\descChoice}_{r,k}
    +\linkcontraction^{\descChoice}_{r,k-1}\topboundary
    =\id
    \)
    on $\bigoplus_{v\in W,\,h(v)=r}\heightmodule{v}{k}$ for $0\leq k\leq p+1$.
\end{rem}

\begin{defn}%
    \label{def:one-level-pushing-operator}
    In the setting of~\cref{def:descending-choice}, the \emph{one-level pushing operator associated to $\descChoice$ at height $r$} is the collection of maps
    \[
        \morsepush^{\descChoice}_{r,k} \defq -\bottomboundary\linkcontraction^{\descChoice}_{r,k} \colon \;\bigoplus_{\mathclap{\substack{v\in W\\h(v)=r}}}\heightmodule{v}{k} \longrightarrow\heightmodule{r-1}{k},
    \]
    for $0\leq k\leq p+1$.
\end{defn}

\begin{rem}%
    \label{rem:pushing-construction}
    Fix $k\geq1$, take $p=k-1$ in the preceding construction, and suppose that $r>a_{k-1}$.
    The pushing operator $\morsepush^{\descChoice}_{r,k}$
    is a chain-level realization of pushing along the cones over descending links in cubical Morse theory (compare~\citelist{\cite{BestvinaBrady1997}*{Lemma~2.5}\cite{Bestvina2008PLMorse}*{Proposition~2.7}\cite{Genevois2023}*{Section~4.3}}).
    Let $b \in C_k(\sublevel{X}{r})$ and decompose $b$ as $b = x_{\leq r-1} + x_{r}$
    with $x_{\leq r-1} \in C_k(\sublevel{X}{r-1})$ and $x_{r} \in \heightmodule{r}{k}$.
    Suppose further that $\partial b \in C_{k-1}(\sublevel{X}{r-1})$.
    Then $\topboundary x_r=0$, so $\topboundary\linkcontraction^{\descChoice}_{r,k}x_r=x_r$
    by~\cref{rem:levelwise-contraction}.
    Hence, using $\partial=\topboundary+\bottomboundary$, we get
    \begin{align}%
        \label{eq:pushing-construction-new-chain}
        b-\partial\linkcontraction^{\descChoice}_{r,k}x_r
        =(b-x_r)-\bottomboundary\linkcontraction^{\descChoice}_{r,k}x_r
        =(b-x_r)+\morsepush^{\descChoice}_{r,k}x_r \in C_k(\sublevel{X}{r-1}).
    \end{align}
    Thus, subtracting the boundary of $\linkcontraction^{\descChoice}_{r,k}x_r$ removes the component $x_r$ at height $r$ and replaces it by $t_{r-1} \defq \morsepush^{\descChoice}_{r,k}x_r$ at height $r-1$.
    Denote the resulting chain by $b'$.
    It satisfies
    \begin{align*}%
        \partial b' = \partial(b-\partial\linkcontraction^{\descChoice}_{r,k}x_r)
        = \partial b.
    \end{align*}

    By~\cref{lem:cubical-boundary-splitting:norm,lem:lifted-density-contraction-bounds:norm}, we obtain
    \(
    \norm{\morsepush^{\descChoice}_{r,k}x_r} \leq (k+1)M_{k-1}\norm{x_r}
    \),
    so~\cref{eq:pushing-construction-new-chain} yields
    \[
        \norm{b'}
        \leq \norm{b} + (k+1)M_{k-1} \norm{x_r}
        \leq ((k+1)M_{k-1} + 1)\norm{b}.
    \]

    Let $x_{r-1}$ be the component of $b$ at height $r-1$,
    that is, $x_{\leq r-1} = x_{\leq r-2} + x_{r-1}$ with $x_{\leq r-2} \in C_k(\sublevel{X}{r-2})$ and $x_{r-1} \in \heightmodule{r-1}{k}$.
    If, in addition to $\partial b \in C_{k-1}(\sublevel{X}{r-1})$, we know
    that $\partial b = \partial b' \in C_{k-1}(\sublevel{X}{r-2})$, we can repeat the above
    height reduction with $b'$.
    This time, $x_{r-1} + t_{r-1}$ is replaced by
    $t_{r-2} \defq \morsepush^{\descChoice}_{r-1,k}(x_{r-1} + t_{r-1})$
    to obtain a chain $b''$ of maximal height $r-2$.
    Let $E_k \defq (k+1)M_{k-1} + 1$. Then
    \begin{align*}
        \norm{b''} \leq E_k\norm{b'} \leq E_k^2 \norm{b}.
    \end{align*}
    One sees that repeated pushing will, in general, cause the norm to grow
    exponentially with the height of the chain.
\end{rem}

\subsection{Probabilistic pushing}%
\label{sec:probabilistic-pushing}
For the remainder of this~\lcnamecref{sec:probabilistic-pushing}, suppose that the descending links satisfy strong uniform density above height $a_0$.

Let $c \in C_{k-1}(\sublevel{X}{a})$ be a cycle and $b \in C_k(X)$ a filling of $c$.
As explained in~\cref{rem:pushing-construction}, naive reduction of the height of $b$ by one might increase the norm by a constant multiplicative factor $E_k > 0$.
Performing this iteratively to push the filling into $\sublevel{X}{a}$
would therefore only produce a filling with norm at most exponential
in the height of $b$, that is, if $H\geq a$ and $b \in C_k(\sublevel{X}{H})$,
then the norm of the pushed-down filling is bounded by $E_k^{H-a}\norm{b}$.

The goal of this~\lcnamecref{sec:probabilistic-pushing} is to obtain an
improved bound for reducing the height of a filling by one.
Write $b^{(H)} \defq b = x_{\leq a} + \sum_{m=a+1}^H x_m$, where $x_{\leq a} \in C_k(\sublevel{X}{a})$ and $x_m \in \heightmodule{m}{k}$.
Each height reduction step applies the pushing map from~\cref{rem:pushing-construction} to obtain a new chain with height decreased by one.
After $r$ reduction steps, the filling will be of the form
\[
    b^{(H-r)} = x_{\leq a} + \cdots + x_{H-r-1} + x_{H-r} + t_{H-r},
\]
where $t_{H-r}$ is the image of $x_{H-r+1} + t_{H-r+1}$
under $\morsepush^{\descChoice}_{H-r+1,k} = -\bottomboundary\linkcontraction^{\descChoice}_{H-r+1,k}$.
In particular, the pushing map that replaces $b^{(H-r)}$ by $b^{(H-r-1)}$ modifies only the cells of maximal height,
that is, the term $y_{H-r} = x_{H-r} + t_{H-r}$.
Note that $t_{H-r}$ lies in the image of $\bottomboundary$ and hence is a $\bottomboundary$\=/cycle.
In order to obtain polynomial bounds for the norm of a filling which has been pushed into $\sublevel{X}{a}$, we show that $\descChoice$ can be chosen such
that the increase in norm when pushing the $t_{H-r}$-components is much
smaller than that for the $x_{H-r}$-components.
More precisely, we prove that for each fixed degree $k$ there are $A_k \geq 0$ and $a_k \in \bbN$ such that for each $m > a_k$ and every $\bottomboundary$\=/cycle
$t \in \heightmodule{m}{k}$ there exists a descending choice $\descChoice$ satisfying
\begin{equation}%
    \label{eq:goal-estimate}
    \norm{\morsepush^{\descChoice}_{m,k}t} \leq \left(1+\frac{A_k}{m}\right)\norm{t}.
\end{equation}
Because each $x_m$ is only pushed once, the general multiplicative cost for reducing
height by one only contributes $E_k \norm{b}$ to the norm of the pushed-down filling.

We first describe which phenomena can occur when pushing a $\bottomboundary$\=/cycle.
To illustrate this, consider a $3$-dimensional cube $Q=\cubecell{v}{a,b,c}$ with $h(v)=m+1$, and put
$t=\bottomboundary Q\in\heightmodule{m}{2}$,
that is, $t = -A + B - C$, where
$A=\cubecell{v/a}{b/a,c/a}$, $B=\cubecell{v/b}{a/b,c/b}$, and $C=\cubecell{v/c}{a/c,b/c}$.
Thus, $\bottomboundary t=0$, and we need to estimate the norm of
\[
    \morsepush^{\descChoice}_{m,2}t=-\bottomboundary\linkcontraction^{\descChoice}_{m,2}t.
\]
The situation is depicted in~\cref{fig:pushing}.

Put $x=b/a$, $y=c/a$, and $e=\descChoice(v/a)$.
We distinguish the cases when $e$ is incompatible with $x$ or $y$
and when $e$ is compatible with both $x$ and $y$.

If $e$ is incompatible with $x$ or $y$, the norm of $\morsepush^{\descChoice}_{m,2}t$ generally only satisfies the bound given by~\cref{lem:lifted-density-contraction-bounds}. The following~\lcnamecref{rem:incompatible-pushing} illustrates an explicit filling of this kind.

\begin{rem}%
    \label{rem:incompatible-pushing}
    In $\lkdown(v/a)$, the chains $F_{e,0}[x]$ and $F_{e,0}[y]$ join $e$ to the two endpoints of $[x,y]$, since $\partial F_{e,0}[x]=[x]-[e]$ and
    $\partial F_{e,0}[y]=[y]-[e]$.
    Together with $[x,y]$, they form the cycle
    \[
        [x,y]-F_{e,0}[y]+F_{e,0}[x]
        =\partial F_{e,1}[x,y].
    \]
    This cycle is first filled in the descending link, then lifted to $X$ using $\linkshift_{v/a,3}$, and finally pushed down using $\bottomboundary$.
\end{rem}

\begin{figure}
    \centering
    \pgfmathsetmacro{\CurrentUpAxisAngle}{54.735610317} %
\tikzset{%
    rotate around current up/.style={
            rotate around z=45,
            rotate around y=\CurrentUpAxisAngle,
            rotate around z=#1,
            rotate around y=-\CurrentUpAxisAngle,
            rotate around z=-45,
        }
}%
\def\SquareSide{2.75}%
\begin{tikzpicture}[
        baseline=(current bounding box.center),
        3d view={-45}{65},
        line join=round,
        line cap=round,
        font=\small,
        rotate around current up=60
    ]

    \def\LX{\SquareSide}
    \def\LY{\SquareSide}
    \def\LZ{\SquareSide}

    \cubeCoordinates
    \cubeCoordinates[prefix=a,z=-\LZ]
    \cubeCoordinates[prefix=b,y=-\LY]

    \begin{scope}[gray,thick,dashed,every path/.append style={descending edge}]
        \draw (C)--(BC);
        \draw (V)--(C);
        \draw (C)--(AC);
    \end{scope}

    \begin{scope}[light Red,opacity=1]
        \fill (O)--(AB)--(B)--(BC)--cycle;
        \fill (O)--(AB)--(A)--(AC)--cycle;
    \end{scope}

    \begin{scope}[thick,every path/.append style={descending edge}]
        \draw (A)--(AB);
        \draw (B)--(AB);
        \draw[gray] (V)--(A);
        \draw[gray] (V)--(B);

        \begin{scope}[dashed]
            \draw (A)--(AC);
            \draw (B)--(BC);
        \end{scope}
    \end{scope}

    \begin{scope}[light Blue,opacity=0.8]
        \fill (aC)--(aAC)--(aO)--(aBC)--cycle;
    \end{scope}

    \begin{scope}[thick,solid,every path/.append style={descending edge}]
        \begin{scope}[dashed]
            \draw (aC)--(aAC);
            \draw (aC)--(aBC);
        \end{scope}
    \end{scope}

    \begin{scope}[light Blue,opacity=0.8]
        \fill (aO)--(aAB)--(aB)--(aBC)--cycle;
        \fill (aA)--(aAB)--(aO)--(aAC)--cycle;
    \end{scope}

    \begin{scope}[thick,solid,every path/.append style={descending edge}]
        \draw (aBC)--(aO);
        \draw (aAB)--(aO);
        \draw (aA)--(aAB);
        \draw (aA)--(aAC);
        \draw (aAC)--(aO);
        \draw (aB)--(aAB);
        \draw (aV)--(aA);
    \end{scope}

    \begin{scope}[light Blue,opacity=0.8]
        \fill (bC)--(bAC)--(bO)--(bBC)--cycle;
    \end{scope}

    \begin{scope}[thick,solid,every path/.append style={descending edge}]
        \begin{scope}[dashed]
            \draw (bC)--(bBC);
            \draw (bC)--(bAC);
        \end{scope}
    \end{scope}

    \begin{scope}[light Blue,opacity=0.8]
        \fill (bO)--(bAB)--(bA)--(bAC)--cycle;
        \fill (bAB)--(bB)--(bBC)--(bO)--cycle;
    \end{scope}

    \begin{scope}[thick,solid,every path/.append style={descending edge}]
        \draw (bBC)--(bO);
        \draw (bB)--(bBC);
        \draw (bAB)--(bO);
        \draw (bA)--(bAB);
        \draw (bAC)--(bO);
        \draw (bB)--(bAB);
        \draw (bV)--(bB);
    \end{scope}

    \begin{scope}[thick,solid,every path/.append style={descending edge}]
        \draw (AB)--(O);
    \end{scope}

    \node[above left,yshift=-2pt] at (C) {$v/c$};
    \node[above right] at (B) {$v/b$};
    \node[above left] at (A) {$v/a$};

    \begin{scope}[gray]
        \node[above] at (V) {$v$};
        \node[left] at ($(V)!0.55!(C)$) {$c$};
        \path[draw=none] (V) -- node[above,sloped] {$b$} (B);
        \path[draw=none] (V) -- node[above,sloped] {$a$} (A);
    \end{scope}

    \path[draw=none] (A) -- node[below,sloped] {$b/a$} (AB);
    \node[left] at ($(A)!0.55!(AC)$) {$c/a$};

    \path[draw=none] (B) -- node[pos=0.55,below right=-2pt,sloped] {$a/b$} (AB);
    \node[right] at ($(B)!0.55!(BC)$) {$c/b$};

    \node at ($(AC)!0.5!(AB)$) {$A$};
    \node at ($(BC)!0.5!(AB)$) {$B$};
    \node[below=5pt] at (C) {$C$};
    \node[right] at ($(AB)!0.5!(O)$) {$u$};

    \path[draw=none] (aV) -- node[above,sloped] {$e(v/a)$} (aA);
    \path[draw=none] (bV) -- node[above,sloped] {$e(v/b)$} (bB);

    \path[draw=none] (aAB) -- node[below, sloped,pos=0.45] {$e(v/a)/(b/a)$} (aB);
    \path[draw=none] (bA) -- node[below, sloped,pos=0.55] {$e(v/b)/(a/b)$} (bAB);

    \node at ($(aAC)!0.5!(aAB)$) {$A_1$};
    \node at ($(aBC)!0.5!(aAB)$) {$A_2$};
    \node[below=5pt] at (aC) {$A_3$};

    \node at ($(bAC)!0.5!(bAB)$) {$B_2$};
    \node at ($(bBC)!0.5!(bAB)$) {$B_1$};
    \node[below=5pt] at (bC) {$B_3$};
\end{tikzpicture}
\hspace{0.75cm}
\begin{tikzpicture}[
        baseline=(current bounding box.center),
        line join=round,
        line cap=round,
        font=\small,
        rotate=45,
        scale=0.9
    ]

    \begin{scope}[thick,every path/.append style={descending edge}]
        \draw[ultra thick, Red] (\SquareSide,\SquareSide) -- node[black,above left] {$u$} (0,\SquareSide);
        \draw (\SquareSide,\SquareSide) -- node[above right] {$e(u)$} (\SquareSide,0);
        \draw[ultra thick, Blue] (0,\SquareSide) -- (0,0);
        \draw[ultra thick, Blue] (\SquareSide,0) -- (0,0);
    \end{scope}
    \node at ({0.5*\SquareSide},{0.5*\SquareSide}) {$S_u$};
\end{tikzpicture}

    \caption{%
        \label{fig:pushing}
        Pushing two adjacent faces in the lower boundary of a cube.
        The pushing behaves \enquote{nicely} if the faces $A_2$ and $B_2$ coincide.
    }
\end{figure}

If $e$ is compatible with $[x,y]$ and equals $x$ or $y$,
the pushing operation is degenerate, and one gets $\morsepush^{\descChoice}_{m,2} A = 0$.

Finally, assume $e$ is compatible with $[x,y]$ and is not equal to either $x$ or $y$.
Then $[e,x,y]$ is a $2$\=/simplex in $\lkdown(v/a)$, and one obtains
\begin{align*}
    \morsepush^{\descChoice}_{m,2}A %
     &=A_1-A_2+A_3,
\end{align*}
where $A_1$, $A_2$, and $A_3$ are the bottom faces of the cube
$\cubecell{v/a}{e,b/a,c/a}$.
We now also assume that the choices $\descChoice(v/b)$ and $\descChoice(v/c)$
satisfy the respective compatibility conditions, so that
\begin{align*}
    \morsepush^{\descChoice}_{m,2}B %
     &=B_1-B_2+B_3, \\
    \morsepush^{\descChoice}_{m,2}C %
     &=C_1-C_2+C_3.
\end{align*}
In total, we obtain
\begin{align*}
    \morsepush^{\descChoice}_{m,2} t
     &= \morsepush^{\descChoice}_{m,2} (-A + B - C) \\
     &= -(A_1 - A_2 + A_3) + (B_1 - B_2 + B_3) - (C_1 - C_2 + C_3)
    \\
     &= \underbrace{(-A_1 + B_1 - C_1)}_{\qdef \principalpart{t}{\descChoice}} + \underbrace{(A_2 - B_2) + (B_3 - C_3) + (C_2 - A_3)}_{\qdef \residuepart{t}{\descChoice}}.
\end{align*}
While this can a priori increase the norm by a factor of $3$,
a suitable choice of $\descChoice$ can ensure that in most cases $\residuepart{t}{\descChoice} = 0$ and thus
$\norm{\morsepush^{\descChoice}_{m,2} t} = \norm{t}$.
To explain this, we focus on the two faces $A$ and $B$, which meet in the edge $u$.
The other pairs of faces can be treated in the same way.
Denote by $\descChoice(u)$ the choice at the maximal vertex of $u$ and by $S_u$
the square obtained by pushing $u$ along $\descChoice(u)$, shown on the right in~\cref{fig:pushing}.
By~\cref{lem:hyperplane-transport}, we have the equivalence
\[
    A_2=S_u=B_2
    \quad\Longleftrightarrow\quad
    \dualhyperplane{\descChoice(v/a)}
    =\dualhyperplane{\descChoice(u)}
    =\dualhyperplane{\descChoice(v/b)}.
\]
Thus, the terms $A_2$ and $B_2$ cancel in $\morsepush^{\descChoice}_{m,2}t$ precisely if
$\associatedhyperplane{\descChoice}{v/a} = \associatedhyperplane{\descChoice}{u}$
and
$\associatedhyperplane{\descChoice}{v/b} = \associatedhyperplane{\descChoice}{u}$.

If these cancellation conditions are satisfied for every pair of faces,
we indeed have $\morsepush^{\descChoice}_{m,2} t = \principalpart{t}{\descChoice}$
and thus $\norm{\morsepush^{\descChoice}_{m,2} t} = \norm{t}$ (see~\cref{fig:pushing-cancellation}).
\begin{figure}
    \centering
    \begin{subfigure}[b]{0.6\textwidth}
        \centering
        \pgfmathsetmacro{\CurrentUpAxisAngle}{54.735610317} %
\tikzset{%
    rotate around current up/.style={
            rotate around z=45,
            rotate around y=\CurrentUpAxisAngle,
            rotate around z=#1,
            rotate around y=-\CurrentUpAxisAngle,
            rotate around z=-45,
        }
}%
\begin{tikzpicture}[
        baseline=(current bounding box.south),
        3d view={-45}{60},
        line join=round,
        line cap=round,
        font=\small,
        rotate around current up=55
    ]
    \def\SquareSide{2.6}%
    \def\LightRedFaceOpacity{0.4}%
    \def\LightBlueFaceOpacity{0.8}%

    \def\LX{\SquareSide}
    \def\LY{\SquareSide}
    \def\LZ{\SquareSide}

    \cubeCoordinates
    \cubeCoordinates[prefix=a,z=-\LZ]
    \cubeCoordinates[prefix=b,y=-\LY]
    \cubeCoordinates[prefix=c,x=-\LX]

    \begin{scope}[light Red,opacity=\LightRedFaceOpacity]
        \fill (cV)--(cB)--(cAB)--(cA)--cycle;
    \end{scope}

    \begin{scope}[gray,thick,solid]
        \draw[descending edge] (cV)--(cA);
        \draw[descending edge=0.4] (cV)--(cB);
        \draw[dashed,descending edge=0.4] (cV)--(cC);
    \end{scope}

    \begin{scope}[light Blue,opacity=\LightBlueFaceOpacity]
        \fill (cC)--(cAC)--(cO)--(cBC)--cycle;
    \end{scope}

    \begin{scope}[thick,dashed]
        \draw (cC)--(cAC);
        \draw (cC)--(cBC);
    \end{scope}

    \begin{scope}[light Blue,opacity=\LightBlueFaceOpacity]
        \fill (cO)--(cAB)--(cB)--(cBC)--cycle;
        \fill (cA)--(cAB)--(cO)--(cAC)--cycle;
    \end{scope}

    \begin{scope}[thick,solid,every path/.append style={descending edge}]
        \draw (cBC)--(cO);
        \draw (cAB)--(cO);
        \draw (cAC)--(cO);
    \end{scope}

    \begin{scope}[thick,dashed,every path/.append style={}]
        \draw (cA)--(cAC);
        \draw (cB)--(cBC);
    \end{scope}

    \begin{scope}[light Red,opacity=\LightRedFaceOpacity]
        \fill (aV)--(aB)--(aBC)--(aC)--cycle;
    \end{scope}

    \begin{scope}[gray,thick,solid,every path/.append style={descending edge}]
        \draw (aV)--(aA);
        \draw (aV)--(aB);
        \draw (aV)--(aC);
    \end{scope}

    \begin{scope}[light Blue,opacity=\LightBlueFaceOpacity]
        \fill (aC)--(aAC)--(aO)--(aBC)--cycle;
    \end{scope}

    \begin{scope}[thick,solid,every path/.append style={descending edge}]
        \begin{scope}[dashed]
            \draw (aC)--(aAC);
            \draw (aC)--(aBC);
        \end{scope}
    \end{scope}

    \begin{scope}[light Blue,opacity=\LightBlueFaceOpacity]
        \fill (aO)--(aAB)--(aB)--(aBC)--cycle;
        \fill (aA)--(aAB)--(aO)--(aAC)--cycle;
    \end{scope}

    \begin{scope}[thick,solid,every path/.append style={descending edge}]
        \begin{scope}[gray]
            \draw (aV)--(aA);
            \draw (aV)--(aB);
            \draw (aV)--(aC);
        \end{scope}
        \draw (aBC)--(aO);
        \draw (aB)--(aBC);
        \draw (aAB)--(aO);
        \draw (aA)--(aAB);
        \draw (aA)--(aAC);
        \draw (aAC)--(aO);
        \draw (aB)--(aAB);
    \end{scope}

    \begin{scope}[light Red,opacity=\LightRedFaceOpacity]
        \fill (bV)--(bA)--(bAC)--(bC)--cycle;
    \end{scope}

    \begin{scope}[gray,thick,solid,every path/.append style={descending edge}]
        \draw (bV)--(bA);
        \draw (bV)--(bB);
        \draw (bV)--(bC);
    \end{scope}

    \begin{scope}[light Blue,opacity=\LightBlueFaceOpacity]
        \fill (bC)--(bAC)--(bO)--(bBC)--cycle;
    \end{scope}

    \begin{scope}[thick,solid,every path/.append style={descending edge}]
        \begin{scope}[dashed]
            \draw (bC)--(bBC);
            \draw (bC)--(bAC);
        \end{scope}
    \end{scope}

    \begin{scope}[light Blue,opacity=\LightBlueFaceOpacity]
        \fill (bO)--(bAB)--(bA)--(bAC)--cycle;
        \fill (bAB)--(bB)--(bBC)--(bO)--cycle;
    \end{scope}

    \begin{scope}[thick,solid,every path/.append style={descending edge}]
        \draw (bBC)--(bO);
        \draw (bB)--(bBC);
        \draw (bAB)--(bO);
        \draw (bA)--(bAB);
        \draw (bA)--(bAC);
        \draw (bAC)--(bO);
        \draw (bB)--(bAB);
    \end{scope}

    \begin{scope}[thick,solid,every path/.append style={descending edge}]
        \draw (AB)--(O);
    \end{scope}

    \node at ($(aAC)!0.5!(aAB)$) {$A_1$};
    \node at ($(aBC)!0.5!(aAB)$) {$A_2$};
    \node[below=5pt] at (aC) {$A_3$};

    \node at ($(bAC)!0.5!(bAB)$) {$B_2$};
    \node at ($(bBC)!0.5!(bAB)$) {$B_1$};
    \node[below=5pt] at (bC) {$B_3$};

    \node[below=5pt] at (cC) {$C_1$};
    \node at ($(cA)!0.75!(cO)$) {$C_2$};
    \node at ($(cB)!0.75!(cO)$) {$C_3$};

    \node at ($(aV)!0.5!(aBC)$) {$A$};
    \node at ($(bV)!0.5!(bAC)$) {$B$};
    \node[below,yshift=-8pt,xshift=8pt] at (cV) {$C$};

\end{tikzpicture}

        \caption{%
            \label{fig:pushing-cancellation-none}
            No cancellation.}
    \end{subfigure}%
    \hfill%
    \begin{subfigure}[b]{0.35\textwidth}
        \centering
        \begin{tikzpicture}[
        baseline=(current bounding box.south),
        3d view={-10}{-0},
        line join=round,
        line cap=round,
        font=\small,
    ]
    \def\SquareSide{3}%
    \def\LightRedFaceOpacity{0.4}%
    \def\LightBlueFaceOpacity{0.8}%

    \pgfmathsetmacro{\HexRadius}{0.72*\SquareSide}
    \pgfmathsetmacro{\HexX}{0.8660254*\HexRadius}
    \pgfmathsetmacro{\HexY}{0.5*\HexRadius}
    \pgfmathsetmacro{\CubeHeight}{0.72*\SquareSide}
    \pgfmathsetmacro{\OuterEdgeShift}{0.37*\SquareSide}

    \path (tpp cs:x=0,y=0,z=-\OuterEdgeShift) coordinate (O);
    \path (tpp cs:x=0,y=0,z={\CubeHeight-\OuterEdgeShift}) coordinate (AB);

    \path (tpp cs:x=0,y=0,z=-\OuterEdgeShift) coordinate (aO);
    \path (tpp cs:x=0,y=0,z={\CubeHeight-\OuterEdgeShift}) coordinate (aBC);
    \path (tpp cs:x=-\HexX,y=\HexY,z=0) coordinate (aAC);
    \path (tpp cs:x=0,y=-\HexRadius,z=0) coordinate (aAB);
    \path (tpp cs:x=-\HexX,y=\HexY,z=\CubeHeight) coordinate (aC);
    \path (tpp cs:x=0,y=-\HexRadius,z=\CubeHeight) coordinate (aB);
    \path (tpp cs:x=-\HexX,y=-\HexY,z=\OuterEdgeShift) coordinate (aA);
    \path (tpp cs:x=-\HexX,y=-\HexY,z={\CubeHeight+\OuterEdgeShift}) coordinate (aV);

    \path (tpp cs:x=0,y=0,z=-\OuterEdgeShift) coordinate (bO);
    \path (tpp cs:x=\HexX,y=\HexY,z=0) coordinate (bBC);
    \path (tpp cs:x=0,y=0,z={\CubeHeight-\OuterEdgeShift}) coordinate (bAC);
    \path (tpp cs:x=0,y=-\HexRadius,z=0) coordinate (bAB);
    \path (tpp cs:x=\HexX,y=\HexY,z=\CubeHeight) coordinate (bC);
    \path (tpp cs:x=\HexX,y=-\HexY,z=\OuterEdgeShift) coordinate (bB);
    \path (tpp cs:x=0,y=-\HexRadius,z=\CubeHeight) coordinate (bA);
    \path (tpp cs:x=\HexX,y=-\HexY,z={\CubeHeight+\OuterEdgeShift}) coordinate (bV);

    \path (tpp cs:x=0,y=0,z=-\OuterEdgeShift) coordinate (cO);
    \path (tpp cs:x=\HexX,y=\HexY,z=0) coordinate (cBC);
    \path (tpp cs:x=-\HexX,y=\HexY,z=0) coordinate (cAC);
    \path (tpp cs:x=0,y=0,z={\CubeHeight-\OuterEdgeShift}) coordinate (cAB);
    \path (tpp cs:x=0,y=\HexRadius,z=\OuterEdgeShift) coordinate (cC);
    \path (tpp cs:x=\HexX,y=\HexY,z=\CubeHeight) coordinate (cB);
    \path (tpp cs:x=-\HexX,y=\HexY,z=\CubeHeight) coordinate (cA);
    \path (tpp cs:x=0,y=\HexRadius,z={\CubeHeight+\OuterEdgeShift}) coordinate (cV);

    \coordinate (cVaVIntersection) at
    (intersection of cV--cA and aV--aB);
    \coordinate (cVbVIntersection) at
    (intersection of cV--cB and bV--bA);
    \coordinate (cVCbVIntersection) at
    (intersection of cV--cC and bV--bA);

    \begin{scope}[gray,thick]
        \draw[dashed,descending edge] (cV)--(cC);
        \draw[solid] (cV)--(cVCbVIntersection);
    \end{scope}

    \begin{scope}[light Red,opacity=\LightRedFaceOpacity]
        \fill (cV)--(cB)--(AB)--(cA)--cycle;
    \end{scope}

    \begin{scope}[light Blue,opacity=\LightBlueFaceOpacity]
        \fill (cC)--(cAC)--(O)--(cBC)--cycle;
    \end{scope}

    \begin{scope}[thick,dashed,every path/.append style={descending edge}]
        \draw (cC)--(cAC);
        \draw (cC)--(cBC);
    \end{scope}

    \begin{scope}[gray,thick,dashed,every path/.append style={descending edge}]
        \draw (aC)--(aAC);
    \end{scope}

    \begin{scope}[gray,thick,dashed,every path/.append style={descending edge=0.4}]
        \draw (AB)--(O);
    \end{scope}

    \begin{scope}[light Blue,opacity=\LightBlueFaceOpacity]
        \fill (bB)--(bBC)--(O)--(bAB)--cycle;
    \end{scope}

    \begin{scope}[light Blue,opacity=\LightBlueFaceOpacity]
        \fill (aA)--(aAB)--(O)--(aAC)--cycle;
    \end{scope}

    \begin{scope}[dashed,gray,thick,every path/.append style={descending edge}]
        \draw (cV)--(cA);
        \draw (cV)--(cB);
    \end{scope}

    \begin{scope}[gray,thick,solid]
        \draw (cV)--(cVaVIntersection);
        \draw (cV)--(cVbVIntersection);
    \end{scope}

    \begin{scope}[light Red,opacity=\LightRedFaceOpacity]
        \fill (aV)--(aB)--(AB)--(aC)--cycle;
        \fill (bV)--(bA)--(AB)--(bC)--cycle;
    \end{scope}

    \begin{scope}[gray,thick,solid,every path/.append style={descending edge}]
        \draw (aV)--(aB);
        \draw (bV)--(bA);
        \draw (bV)--(bC);
        \begin{scope}[dashed]
            \draw (bC)--(AB);
            \draw (aV)--(aC);
            \draw (aB)--(AB);
            \draw (aC)--(AB);
        \end{scope}
    \end{scope}

    \begin{scope}[gray,thick,every path/.append style={descending edge}]
        \draw (aV)--(aA);
        \draw (bC)--(bBC);
        \draw (aB)--(aAB);
        \draw (bV)--(bB);
    \end{scope}

    \begin{scope}[thick,solid,every path/.append style={descending edge}]
        \draw (aAB)--(O);
        \draw (aAC)--(O);
        \draw (bBC)--(O);
        \draw (aA)--(aAB);
        \draw (aA)--(aAC);
        \draw (bB)--(bAB);
        \draw (bB)--(bBC);
    \end{scope}

    \node at ($(aAC)!0.5!(aAB)$) {$A_1$};
    \node at ($(bBC)!0.5!(bAB)$) {$B_1$};
    \node at ($(cC)!0.15!(cO)$) {$C_1$};

    \node at ($(aV)!0.5!(AB)$) {$A$};
    \node at ($(bV)!0.5!(AB)$) {$B$};
    \node[below,yshift=-8pt,xshift=8pt] at (cV) {$C$};
\end{tikzpicture}

        \caption{%
            \label{fig:pushing-cancellation-all}
            All possible cancellations.
        }
    \end{subfigure}
    \caption{%
        \label{fig:pushing-cancellation}
        The two extreme cases of cancellation
        when pushing the bottom boundary of a $3$\=/cube.
        In both cases, the original chain is shown
        in red and the pushed chain in blue.
    }
\end{figure}

Going back to the general setting, this motivates the following~\lcnamecref{def:descending-choice-defect}.

\begin{defn}%
    \label{def:descending-choice-defect}
    Let $\descChoice$ be a descending choice on $W\subseteq \superlevel{X}{a_0}^{(0)}$, and let $Q=\cubecell{v}{\sigma}$ be a basis cube such that $v\in W$ and $v/a\in W$ for every $a\in \vertices{\sigma}$.
    The \emph{$\descChoice$\=/defect set} of $Q$ is
    \[
        \defectset{Q}{\descChoice} \defq \{a\in \vertices{\sigma}\mid \associatedhyperplane{\descChoice}{v}\neq\associatedhyperplane{\descChoice}{v/a}\}.
    \]
    Thus, $a\notin\defectset{Q}{\descChoice}$ precisely when the descending choices at the maximal vertex of $Q$ and at the opposite vertex of the edge $a$ have the same associated hyperplane.

    For a chain $t=\sum_Qt_QQ\in\heightmodule{m}{k}$, set
    \begin{align*}
        \uppervertices{t} &\defq \{ v \mid \cubecell{v}{\sigma} \in \supp(t) \} \\
        \lowervertices{t} &\defq \{ v/a \mid a \in \vertices{\sigma} \text{ for } \cubecell{v}{\sigma} \in \supp(t) \}
    \end{align*}
    and $\choicevertices{t} \defq \uppervertices{t} \cup \lowervertices{t}$.
    Assume that $\choicevertices{t} \subseteq W$.
    The \emph{$\descChoice$\=/defect} of $t$ is
    \[
        \defectweight{t}{\descChoice} \defq \;\sum_{\mathclap{Q\in\supp(t)}}\; \abs{t_Q} \size{\defectset{Q}{\descChoice}}.
    \]
\end{defn}

\begin{defn}%
    \label{def:cubical-good-bad-parts}
    Let $t=\sum_Q t_Q Q \in \heightmodule{m}{k}$, and let $\descChoice$ be a descending choice on a set containing $\choicevertices{t}$.
    A basis cube $Q=\cubecell{v}{\sigma}$ in $\supp(t)$ is \emph{$\descChoice$\=/good} if $\descChoice(v)$ is compatible with $\sigma$ and does not belong to $\vertices{\sigma}$, and it is \emph{$\descChoice$\=/bad} otherwise.
    Define
    \[
        \goodpart{t}{\descChoice}\defq\sum_{\mathclap{Q\text{ $\descChoice$\=/good}}}t_QQ,
        \quad
        \badpart{t}{\descChoice}\defq\sum_{\mathclap{Q\text{ $\descChoice$\=/bad}}}t_QQ.
    \]
\end{defn}

\begin{rem}%
    \label{rem:good-bad-part-estimate}
    Put $g=\goodpart{t}{\descChoice}$ and $b=\badpart{t}{\descChoice}$.
    Their supports partition $\supp(t)$, so $t = g + b$ and $\defectweight{t}{\descChoice}=\defectweight{g}{\descChoice}+\defectweight{b}{\descChoice}$.
    If $Q=\cubecell{v}{\sigma}$ is $\descChoice$\=/bad, there is an $a\in \vertices{\sigma}$ such that $\descChoice(v)$ is either incompatible with or equal to $a$.
    In either case,~\cref{lem:hyperplane-transport:transport} gives $a\in\defectset{Q}{\descChoice}$.
    Hence, $\size{\defectset{Q}{\descChoice}} \geq 1$ and
    in particular $\norm{b}\leq\defectweight{b}{\descChoice}\leq\defectweight{t}{\descChoice}$.
\end{rem}

\begin{lem}%
    \label{lem:deterministic-pushing-estimate}
    Let $k\geq1$.
    For $p=k-1$, let $a_k$ and $M_{k-1}\geq1$ be the constants obtained from~\cref{lem:lifted-density-contraction-bounds}.
    Let $m>a_k$, let $t\in\heightmodule{m}{k}$ satisfy $\bottomboundary t=0$, and let $\descChoice$ be a descending choice on a set containing $\choicevertices{t}$.
    Then
    \[
        \norm{\morsepush^{\descChoice}_{m,k}t} \leq \norm{t} + (2k + 1) M_{k-1}\defectweight{t}{\descChoice}\mpct{.}
    \]
\end{lem}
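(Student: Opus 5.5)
The plan is to split $t$ into its good and bad parts, $t=g+b$ with $g=\goodpart{t}{\descChoice}$ and $b=\badpart{t}{\descChoice}$ as in~\cref{def:cubical-good-bad-parts}, and to treat the two pieces differently. The bad part is estimated crudely. By~\cref{lem:cubical-boundary-splitting:norm,lem:lifted-density-contraction-bounds:norm} we have $\norm{\morsepush^{\descChoice}_{m,k}b}\leq(k+1)M_{k-1}\norm{b}$, and~\cref{rem:good-bad-part-estimate} gives $\norm{b}\leq\defectweight{b}{\descChoice}$. All of the saving has to come from $g$.

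For a good cube $Q=\cubecell{v}{\sigma}$ with $\sigma=[a_1,\ldots,a_k]$, write $e=\descChoice(v)$. By~\cref{lem:lifted-density-contraction-bounds:cone} we have $\linkcontraction^{\descChoice}_{m,k}Q=\cubecell{v}{e,\sigma}$. Expanding $-\bottomboundary$ via~\cref{def:height-grading} gives
\[
    \morsepush^{\descChoice}_{m,k}Q=\cubecell{v/e}{\sigma/e}+\sum_{i=1}^{k}(-1)^{i}\cubecell{v/a_i}{e/a_i,(\sigma\setminus a_i)/a_i}.
\]
Summing over good cubes splits $\morsepush^{\descChoice}_{m,k}g$ into two parts. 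The first is a principal part $\principalpart{g}{\descChoice}=\sum t_Q\cubecell{v/e}{\sigma/e}$, which has norm at most $\norm{g}\leq\norm{t}$. The second is a residue part $\residuepart{g}{\descChoice}$ indexed by pairs $(Q,a_i)$.

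The key step is to recognise the residue as (up to a global sign) the levelwise contraction at height $m-1$ applied to $\bottomboundary g$. First note that the vertices $v/a_i$ lie in $\choicevertices{t}$ and have height $m-1\geq a_k$, so the contractions $\linkcontraction_{v/a_i,\descChoice(v/a_i),k-1}$ exist and $\linkcontraction^{\descChoice}_{m-1,k-1}$ is defined on $\bottomboundary g$. We have
\[
    \linkcontraction^{\descChoice}_{m-1,k-1}\bottomboundary g=\sum_Q t_Q\sum_i(-1)^{i}\linkcontraction_{v/a_i,\descChoice(v/a_i),k-1}\cubecell{v/a_i}{(\sigma\setminus a_i)/a_i}.
\]
Suppose $a_i\notin\defectset{Q}{\descChoice}$. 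Then~\cref{lem:hyperplane-transport:transport} gives $\descChoice(v/a_i)=e/a_i$, and this edge is compatible with $(\sigma\setminus a_i)/a_i$ because all these edges span a cube. Hence~\cref{lem:lifted-density-contraction-bounds:cone} shows that the $(Q,a_i)$-summand of $\linkcontraction^{\descChoice}_{m-1,k-1}\bottomboundary g$ coincides with the $(Q,a_i)$-summand of the residue. So $\residuepart{g}{\descChoice}-\linkcontraction^{\descChoice}_{m-1,k-1}\bottomboundary g$ is a sum over defect pairs only. Each defect pair contributes at most $1+M_{k-1}\leq 2M_{k-1}$ times $\abs{t_Q}$, so this difference has norm at most $2M_{k-1}\defectweight{g}{\descChoice}$.

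Next we use $\bottomboundary t=0$, which gives $\bottomboundary g=-\bottomboundary b$. Therefore
\[
    \norm{\linkcontraction^{\descChoice}_{m-1,k-1}\bottomboundary g}\leq kM_{k-1}\norm{b}\leq kM_{k-1}\defectweight{b}{\descChoice}.
\]
Collecting the pieces:
\[
    \norm{\morsepush^{\descChoice}_{m,k}t}\leq\norm{t}+(2k+1)M_{k-1}\defectweight{b}{\descChoice}+2M_{k-1}\defectweight{g}{\descChoice}\leq\norm{t}+(2k+1)M_{k-1}\defectweight{t}{\descChoice},
\]
using $\defectweight{t}{\descChoice}=\defectweight{g}{\descChoice}+\defectweight{b}{\descChoice}$ and $k\geq1$.

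The main obstacle is the sign and ordering bookkeeping in the matching step. One must check that the orientation of $\cubecell{v/a_i}{e/a_i,(\sigma\setminus a_i)/a_i}$ produced by $-\bottomboundary\cubecell{v}{e,\sigma}$ agrees, up to one global sign independent of $(Q,i)$, with the orientation $\linkcontraction$ produces via the cone formula. The global sign is harmless, since only the norm matters. I would verify this directly from~\cref{def:oriented-cube-notation}, using that transport by $e$ and then by $a_i$ preserves the relative order of the remaining edges.
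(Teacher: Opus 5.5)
Your proof is correct and is essentially the paper's argument. You use the same good/bad splitting and principal part, cancel the non-defect summands termwise via \cref{lem:hyperplane-transport:transport} and the cone formula, and use $\bottomboundary t=0$ to replace $\linkcontraction^{\descChoice}_{m-1,k-1}\bottomboundary g$ by $-\linkcontraction^{\descChoice}_{m-1,k-1}\bottomboundary b$. The paper packages these steps into the single operator $C=\bottomboundary\linkcontraction^{\descChoice}_{m,k}+\linkcontraction^{\descChoice}_{m-1,k-1}\bottomboundary$. The sign check you flag is immediate: the cone formula places $\descChoice(v/a_i)=\descChoice(v)/a_i$ first, so, as your own expansion of $-\bottomboundary\cubecell{v}{\descChoice(v),\sigma}$ already shows, the two $(Q,a_i)$-summands agree exactly with no extra sign.
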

\begin{proof}
    For brevity, define $C\defq\bottomboundary\linkcontraction^{\descChoice}_{m,k}+\linkcontraction^{\descChoice}_{m-1,k-1}\bottomboundary$ and $M = M_{k-1}$.

    As before, let $g=\goodpart{t}{\descChoice}$ and $b=\badpart{t}{\descChoice}$.

    Let $Q=\cubecell{v}{\sigma}$ be a basis cube in $\supp(g)$,
    where $\sigma=[a_1,\ldots,a_k]$.
    Put $w_i=v/a_i$ and $\sigma_i=(\sigma\setminus a_i)/a_i$.
    The definitions of $\bottomboundary$ and $\linkcontraction^{\descChoice}_{m-1,k-1}$ give
    \begin{align}%
        \label{eq:det-push-b}
        \linkcontraction^{\descChoice}_{m-1,k-1}\bottomboundary Q &=\sum_{i=1}^k(-1)^i \linkcontraction_{w_i,\descChoice(w_i),k-1} \cubecell{w_i}{\sigma_i}.
    \end{align}
    By~\cref{lem:lifted-density-contraction-bounds:cone}, we have
    $\linkcontraction^{\descChoice}_{m,k}Q=\cubecell{v}{\descChoice(v),\sigma}$.
    The definition of $\bottomboundary$ then gives
    \begin{align}%
        \label{eq:det-push-a}
        \bottomboundary\linkcontraction^{\descChoice}_{m,k}Q &=-\cubecell{v/\descChoice(v)}{\sigma/\descChoice(v)} +\sum_{i=1}^k(-1)^{i+1} \cubecell{w_i}{\descChoice(v)/a_i,\sigma_i}.
    \end{align}
    Adding~\cref[commasep]{eq:det-push-a,eq:det-push-b}, and moving the first term to the left gives
    \begin{align}%
        \label{eq:det-push-a-b}
        \begin{split}
             & C(Q) +\cubecell{v/\descChoice(v)}{\sigma/\descChoice(v)} \\
             & \qquad\qquad= \sum_{i=1}^k(-1)^{i+1} \left( \cubecell{w_i}{\descChoice(v)/a_i,\sigma_i} -\linkcontraction_{w_i,\descChoice(w_i),k-1} \cubecell{w_i}{\sigma_i} \right).
        \end{split}
    \end{align}
    If $a_i\notin\defectset{Q}{\descChoice}$, then $\descChoice(w_i)=\descChoice(v)/a_i$ by~\cref{lem:hyperplane-transport:transport}.
    This edge is compatible with $\sigma_i$,
    so~\cref{lem:lifted-density-contraction-bounds:cone} shows that the corresponding summand on the right-hand side of~\cref{eq:det-push-a-b} is zero.
    If $a_i\in\defectset{Q}{\descChoice}$, the same summand has norm at most $1+M$ by~\cref{lem:lifted-density-contraction-bounds:norm}.
    Define
    \[
        \principalpart{t}{\descChoice} \defq \;\sum_{\mathclap{\substack{Q\in\supp(g)\\Q=\cubecell{v}{\sigma}}}}\; t_Q\cubecell{v/\descChoice(v)}{\sigma/\descChoice(v)},
    \]
    and $\residuepart{t}{\descChoice} \defq \morsepush^{\descChoice}_{m,k}t - \principalpart{t}{\descChoice}$,
    so $\morsepush^{\descChoice}_{m,k}t = \principalpart{t}{\descChoice} + \residuepart{t}{\descChoice}$.
    Because $\bottomboundary t=0$, we have $C(t)=\bottomboundary\linkcontraction^{\descChoice}_{m,k}t=-\morsepush^{\descChoice}_{m,k}t$,
    so $\residuepart{t}{\descChoice} = -C(t) - \principalpart{t}{\descChoice} = -(C(b) + C(g) + \principalpart{t}{\descChoice})$.

    Summing~\cref{eq:det-push-a-b} over $t_Q Q$ for $Q \in \supp(g)$ gives
    \begin{equation}%
        \label{eq:det-push-norm-a}
        \norm{C(g)+\principalpart{t}{\descChoice}} \leq (1+M)\defectweight{g}{\descChoice}.
    \end{equation}
    By the norm estimates on $\bottomboundary$ and $\linkcontraction^{\descChoice}_{m-1,k-1}$
    from~\cref{lem:cubical-boundary-splitting:norm,lem:lifted-density-contraction-bounds:norm}, we have
    $\norm{\bottomboundary\linkcontraction^{\descChoice}_{m,k}b} \leq (k+1)M\norm{b}$
    and $\norm{\linkcontraction^{\descChoice}_{m-1,k-1}\bottomboundary b} \leq kM\norm{b}$.
    Therefore,
    \begin{equation}%
        \label{eq:det-push-norm-b}
        \begin{aligned}
            \norm{C(b)}
             & = \norm{\bottomboundary\linkcontraction^{\descChoice}_{m,k}b +\linkcontraction^{\descChoice}_{m-1,k-1}\bottomboundary b} \\
             & \leq (k+1)M\norm{b} + kM\norm{b} =(2k+1)M\norm{b}.
        \end{aligned}
    \end{equation}
    Consequently, \cref{eq:det-push-norm-a,eq:det-push-norm-b} yield
    \begin{align}%
        \label{eq:det-push-norm-ab}
        \norm{\residuepart{t}{\descChoice}}
        = \norm{C(g) + \principalpart{t}{\descChoice} + C(b)}
        \leq (1+M)\defectweight{g}{\descChoice}+(2k+1)M\norm{b}.
    \end{align}
    Using~\cref{rem:good-bad-part-estimate} and the easy inequality $\norm{\principalpart{t}{\descChoice}} \leq \norm{g}$, this implies that
    \begin{align*}
        \norm{\morsepush^{\descChoice}_{m,k}t}
         &\leq \norm{g} +(1+M)\defectweight{g}{\descChoice} +(2k+1)M\norm{b} \\
         &\leq \norm{g} + \norm{b} + (2k+1)M(\defectweight{g}{\descChoice} + \defectweight{b}{\descChoice}) \\
         &= \norm{t} + (2k+1)M(\defectweight{t}{\descChoice}).\qedhere
    \end{align*}
\end{proof}

In view of~\cref{lem:deterministic-pushing-estimate}
and the discussion at the beginning of this~\lcnamecref{sec:probabilistic-pushing},
the strategy is to find a uniform constant $A'_k > 0$ so that
for each $\bottomboundary$\=/cycle $t \in \heightmodule{m}{k}$ there is a descending choice satisfying
\begin{align}%
    \label{eq:goal-estimate-defect}
    \defectweight{t}{\descChoice} \leq \frac{A'_k}{m} \norm{t}.
\end{align}
In order to do so, we study a suitable collection of such choices at once and show that,
under a random selection from this collection, there must be one choice satisfying~\cref{eq:goal-estimate-defect}.

\begin{defn}%
    \label{def:random-order}
    Let $\calH$ be a finite set.
    For a total order $\precOrd$ on $\calH$ and a non-empty subset $A\subseteq\calH$, write $\leastelement{\precOrd}{A}$ for the $\precOrd$-least element of $A$.

    Let $\totalorders{\calH}$ be the set of all total orders on $\calH$.
    We equip this set with the uniform probability measure.
    A \emph{random order on $\calH$} is a random element of this finite probability space.
\end{defn}

\begin{lem}%
    \label{lem:random-order-minima}
    Let $A$ and $B$ be non-empty subsets of a finite set $\calH$. The uniform probability measure on $\totalorders{\calH}$ satisfies
    \begin{refenum}
        \item\label{lem:random-order-minima:uniform}
              $\PP(\leastelement{\precOrd}{A}=\eta)=\size{A}^{-1}$ for every $\eta\in A$;
        \item\label{lem:random-order-minima:disagreement}
              \(\PP\bigl(\leastelement{\precOrd}{A} \neq\leastelement{\precOrd}{B}\bigr) = \size{A\symdiff B} \cdot \size{A\cup B}^{-1}\).
    \end{refenum}
\end{lem}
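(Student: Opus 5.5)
The plan is to reduce both statements to the classical symmetry fact that, under a uniformly random total order on a finite set $\calH$, every element of a fixed non-empty subset is equally likely to be its least element. I would make this precise by restriction. The map $\totalorders{\calH}\to\totalorders{S}$ that restricts an order to a non-empty subset $S\subseteq\calH$ is surjective. Every fibre has the same size, namely $\size{\calH}!/\size{S}!$, because the orders on $\calH$ inducing a given order on $S$ correspond bijectively to choices of positions for the elements of $S$ among all positions together with an arbitrary order of the remaining elements. So the restriction of a uniform random order is again uniform. Moreover, $\leastelement{\precOrd}{S}$ depends only on the restriction of $\precOrd$ to $S$.

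For~\localref{lem:random-order-minima:uniform}, apply this with $S=A$. Among the $\size{A}!$ total orders on $A$, exactly $(\size{A}-1)!$ have a given $\eta\in A$ as least element. Hence $\PP(\leastelement{\precOrd}{A}=\eta)=\size{A}^{-1}$. Alternatively, relabelling by a permutation of $\calH$ that fixes $A$ setwise and moves $\eta$ to any other $\eta'\in A$ preserves the uniform measure, which gives the same conclusion by symmetry.

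For~\localref{lem:random-order-minima:disagreement}, put $U=A\cup B$ and let $\mu=\leastelement{\precOrd}{U}$. By part~\localref{lem:random-order-minima:uniform} applied to $U$, $\mu$ is uniformly distributed on $U$. The key observation is that $\leastelement{\precOrd}{A}=\leastelement{\precOrd}{B}$ holds exactly when $\mu\in A\cap B$. If $\mu\in A\cap B$, then $\mu$ is the least element of both $A$ and $B$, since both are contained in $U$. If instead $\mu\in A\setminus B$, say, then $\leastelement{\precOrd}{A}=\mu\notin B$ while $\leastelement{\precOrd}{B}\in B$, so the two differ; the case $\mu\in B\setminus A$ is symmetric. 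Therefore the event of disagreement is precisely $\{\mu\in A\symdiff B\}$. This event is a disjoint union over $\eta\in A\symdiff B$ of the events $\{\mu=\eta\}$, each of probability $\size{U}^{-1}$, which yields $\size{A\symdiff B}\cdot\size{A\cup B}^{-1}$.

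There is no real obstacle here. The only point needing care is the claim that restriction of a uniform order is uniform, or equivalently that the symmetry argument is valid. Since the probability space is finite, this is a direct counting argument.
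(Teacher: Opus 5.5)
Your proposal is correct and follows the paper's proof: you show that restricting a uniform random order to a subset gives a uniform order on that subset, which makes every element of $A$ equally likely to be least, and you reduce (ii) to the observation that the least elements of $A$ and $B$ agree exactly when the least element of $A\cup B$ lies in $A\cap B$. Your fibre-counting justification of the restriction step, with $\size{\calH}!/\size{S}!$ orders above each order on $S$, simply spells out what the paper states without proof.
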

\begin{proof}
    The restriction of a uniformly chosen total order to $A$ is uniformly distributed among the total orders on $A$.
    Every element of $A$ is therefore equally likely to be the least element, which proves~\localref{lem:random-order-minima:uniform}.
    The least elements of $A$ and $B$ agree precisely when the least element of $A\cup B$ belongs to $A\cap B$.
    Therefore, applying~\localref{lem:random-order-minima:uniform} to $A\cup B$ proves~\localref{lem:random-order-minima:disagreement}.
\end{proof}

\begin{defn}[Descending choice induced by an order]%
    \label{def:ordered-descending-choice}
    Let $W\subseteq \superlevel{X}{a_0}^{(0)}$ be finite and set
    $\hyperplanesupport{W} \defq \bigcup_{u\in W}\densityhyperplanes{u}$.
    Let $\precOrd\in\totalorders{\hyperplanesupport{W}}$ be an order.
    It induces a descending choice $\descChoice_{\precOrd}$ on $W$ defined by choosing $\descChoice_{\precOrd}(u)$ to be
    the unique element in $\densityset{u} \subseteq \lkdown(u)$ such that
    \(\associatedhyperplane{\descChoice_{\precOrd}}{u} = \leastelement{\precOrd}{\densityhyperplanes{u}}\).
\end{defn}

\begin{lem}%
    \label{lem:random-order-defect-estimate}
    Suppose that the large overlap condition is satisfied with $\overlapconstant > 0$.
    Let $k\geq1$, let $m>a_0+1$, and let $t\in\heightmodule{m}{k}$.
    Let $\precOrd$ be a uniformly random element in $\totalorders{\hyperplanesupport{\choicevertices{t}}}$.
    The induced descending choice $\descChoice_{\precOrd}$ satisfies
    \[
        \expect\bigl[ \defectweight{t}{\descChoice_{\precOrd}} \bigr] \leq \frac{k\overlapconstant}{m}\norm{t}\mpct{.}
    \]
\end{lem}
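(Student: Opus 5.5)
By linearity of expectation it suffices to bound, for each basis cube $Q=\cubecell{v}{\sigma}$ in $\supp(t)$, the expected size of the defect set $\defectset{Q}{\descChoice_{\precOrd}}$, and then to sum with weights $\abs{t_Q}$. Indeed,
\[
    \expect\bigl[\defectweight{t}{\descChoice_{\precOrd}}\bigr]
    =\sum_{Q\in\supp(t)}\abs{t_Q}\sum_{a\in\vertices{\sigma}}\PP\bigl(a\in\defectset{Q}{\descChoice_{\precOrd}}\bigr),
\]
and $\vertices{\sigma}$ has exactly $k$ elements. It is therefore enough to show that, for every such $Q$ and every $a\in\vertices{\sigma}$,
\[
    \PP\bigl(\associatedhyperplane{\descChoice_{\precOrd}}{v}\neq\associatedhyperplane{\descChoice_{\precOrd}}{v/a}\bigr)\leq\overlapconstant/m.
\]
Summing this bound gives $\expect\bigl[\defectweight{t}{\descChoice_{\precOrd}}\bigr]\leq\sum_Q\abs{t_Q}\,k\overlapconstant/m=k\overlapconstant\norm{t}/m$.

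To bound this probability, put $\calH=\hyperplanesupport{\choicevertices{t}}$. Both $v$ and $v/a$ lie in $\choicevertices{t}$, so $\densityhyperplanes{v}$ and $\densityhyperplanes{v/a}$ are subsets of $\calH$. They are non-empty, since $\size{\densityset{u}}\geq N_{h(u)}>0$. Here one checks the heights: $h(v)=m\geq a_0$ and $h(v/a)=m-1>a_0$, so both vertices lie in $\superlevel{X}{a_0}^{(0)}$ and the density sets are defined. This is exactly where the hypothesis $m>a_0+1$ enters.

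By \cref{def:ordered-descending-choice}, $\associatedhyperplane{\descChoice_{\precOrd}}{u}=\leastelement{\precOrd}{\densityhyperplanes{u}}$. Hence \cref{lem:random-order-minima:disagreement} gives
\[
    \PP\bigl(\associatedhyperplane{\descChoice_{\precOrd}}{v}\neq\associatedhyperplane{\descChoice_{\precOrd}}{v/a}\bigr)
    =\frac{\size{\densityhyperplanes{v}\symdiff\densityhyperplanes{v/a}}}{\size{\densityhyperplanes{v}\cup\densityhyperplanes{v/a}}}.
\]
The vertices $v$ and $v/a$ are the endpoints of the edge $a$, and $\max\{h(v),h(v/a)\}=m$, so the large overlap condition (\cref{def:large-overlap-descending-links}) bounds this ratio by $\overlapconstant/m$.

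The argument is essentially bookkeeping. The main point needing care is that the induced choice is well defined. The hyperplane $\leastelement{\precOrd}{\densityhyperplanes{u}}$ is dual to some edge of $\densityset{u}$, and by \cref{lem:hyperplane-transport:injective} that edge is unique, since all edges of $\densityset{u}$ are incident to $u$. One should also note that \cref{def:descending-choice-defect} requires $\choicevertices{t}\subseteq W$, which holds because $W=\choicevertices{t}$ here.
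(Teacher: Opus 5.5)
Your proposal is correct and follows the paper's proof: you bound $\PP\bigl(a\in\defectset{Q}{\descChoice_{\precOrd}}\bigr)$ by $\overlapconstant/m$ using \cref{lem:random-order-minima:disagreement} and the large overlap condition, then sum over the $k$ elements of $\vertices{\sigma}$ by linearity of expectation. Your extra checks (heights, non-emptiness, well-definedness of $\descChoice_{\precOrd}$) are sound; the only quibble is that $m>a_0$ would already place $v/a$ in $\superlevel{X}{a_0}$, so this step does not need the full hypothesis $m>a_0+1$.
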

\begin{proof}
    Fix a basis cube $Q=\cubecell{v}{\sigma}\in\supp(t)$ and an element $a\in \vertices{\sigma}$, and put $w=v/a$.
    By~\cref{def:large-overlap-descending-links,lem:random-order-minima:disagreement},
    \[
        \PP\bigl(a\in\defectset{Q}{\descChoice_{\precOrd}}\bigr)
        = \PP\bigl(\associatedhyperplane{\descChoice_{\precOrd}}{v} \neq \associatedhyperplane{\descChoice_{\precOrd}}{w}\bigr)
        = \frac{\size{\densityhyperplanes{v}\symdiff\densityhyperplanes{w}}}
        {\size{\densityhyperplanes{v}\cup\densityhyperplanes{w}}}
        \leq \frac{\overlapconstant}{m}.
    \]
    We have $\size{\defectset{Q}{\descChoice_{\precOrd}}} = \sum_{a \in \vertices{\sigma}} \mathbb{1}_{\defectset{Q}{\descChoice_{\precOrd}}}(a)$,
    where $\mathbb{1}_{\defectset{Q}{\descChoice_{\precOrd}}}$ denotes the indicator function of the set $\defectset{Q}{\descChoice_{\precOrd}}$.
    Applying linearity of $\expect$ to the definition of $\defectweight{t}{\descChoice_{\precOrd}}$
    together with $\size{\vertices{\sigma}}=k$ therefore gives
    \begin{align*}
        \expect\bigl[\defectweight{t}{\descChoice_{\precOrd}}\bigr]
        = \expect\Bigl[
                     \sum_Q \abs{t_Q} \,\sum_{\mathclap{a\in \vertices{\sigma}}}\; \mathbb{1}_{\defectset{Q}{\descChoice_{\precOrd}}}(a)
                     \Bigr]
         &= \sum_{Q} \abs{t_Q} \,\sum_{\mathclap{a\in \vertices{\sigma}}}\; \PP\bigl(a\in\defectset{Q}{\descChoice_{\precOrd}}\bigr) \\
         &\leq \frac{k\overlapconstant}{m} \sum_{Q}\abs{t_Q}
        = \frac{k\overlapconstant}{m}\norm{t}.\qedhere
    \end{align*}
\end{proof}

\begin{lem}%
    \label{lem:probabilistic-pushing}
    Fix $k\geq1$ and suppose that $X$ has strong uniform density of descending links
    and satisfies the large overlap condition.
    There exist an integer $a_k$ and constants $A_k,B_k>0$
    such that for every $m > a_k$ the following holds.
    \begin{refenum}
        \item\label{lem:probabilistic-pushing:pushing}
              Let $t\in\heightmodule{m}{k}$ be a chain satisfying $\bottomboundary t=0$.
              There is a descending choice $\descChoice$ on $\choicevertices{t}$ such that
              $\norm{\morsepush^{\descChoice}_{m,k}t}\leq(1+A_k/m)\norm{t}$.
        \item\label{lem:probabilistic-pushing:uniform}
              Let $x\in\heightmodule{m}{k}$.
              Then $\norm{\morsepush^{\descChoice}_{m,k}x}\leq B_k\norm{x}$ for every descending choice $\descChoice$ on $\choicevertices{x}$.
    \end{refenum}
\end{lem}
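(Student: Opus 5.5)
The plan is to combine the deterministic estimate of \cref{lem:deterministic-pushing-estimate} with the expectation bound of \cref{lem:random-order-defect-estimate}, and to extract a good choice from the expectation by a first-moment argument.

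\emph{Choice of threshold.} Apply \cref{lem:lifted-density-contraction-bounds} with $p=k-1$ to get a height $a_{k-1}>a_0$ and a constant $M_{k-1}\geq1$. For every vertex $v$ of height at least $a_{k-1}$, the contractions $\linkcontraction_{v,e,j}$ for $e\in\densityset{v}$ are then defined in degrees $j\leq k$ and satisfy the norm and cone properties. Since $\morsepush^{\descChoice}_{m,k}$ involves the contractions at height $m$, and the deterministic estimate also uses those at height $m-1$, we take the integer $a_k$ of the statement to be at least $a_{k-1}+1$ and at least $a_0+2$. This also places every vertex of $\choicevertices{t}$, whose heights are $m$ and $m-1$, inside $\superlevel{X}{a_0}^{(0)}$, so descending choices on $\choicevertices{t}$ make sense. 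The only bookkeeping point is to match the ``$a_k$'' of \cref{lem:deterministic-pushing-estimate} with the one produced here; we simply take the maximum of the two thresholds.

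\emph{Part~(ii).} This is the crude bound already noted in \cref{rem:pushing-construction}. For a basis cube $Q\in\heightmodule{m}{k}$, \cref{lem:lifted-density-contraction-bounds:norm} gives $\norm{\linkcontraction^{\descChoice}_{m,k}Q}\leq M_{k-1}$. The result lies in $\heightmodule{m}{k+1}$, so $\bottomboundary$ multiplies norms by at most $k+1$ by \cref{lem:cubical-boundary-splitting:norm}. Extending linearly gives the claim with $B_k=(k+1)M_{k-1}$, for any descending choice.

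\emph{Part~(i).} Let $t$ be a $\bottomboundary$-cycle, put $W=\choicevertices{t}$ (a finite set), and let $\precOrd$ be uniformly random in $\totalorders{\hyperplanesupport{W}}$.
\begin{itemize}
\item By \cref{lem:random-order-defect-estimate}, $\expect[\defectweight{t}{\descChoice_{\precOrd}}]\leq k\overlapconstant\norm{t}/m$.
\item The probability space is finite and nonempty, so some order $\precOrd$ realises a defect at most this expectation.
\item For that $\descChoice=\descChoice_{\precOrd}$, \cref{lem:deterministic-pushing-estimate} gives
\[
\norm{\morsepush^{\descChoice}_{m,k}t}\leq\norm{t}+(2k+1)M_{k-1}\frac{k\overlapconstant}{m}\norm{t}.
\]
\end{itemize}
Hence (i) holds with $A_k=k(2k+1)M_{k-1}\overlapconstant$.

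No genuine obstacle remains, since the substantive work is already in the two preceding lemmas.
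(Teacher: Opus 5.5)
Your proposal is correct and follows the paper's proof essentially verbatim. It uses the same threshold from \cref{lem:lifted-density-contraction-bounds} with $p=k-1$, the same constants $A_k=k(2k+1)M_{k-1}\overlapconstant$ and $B_k=(k+1)M_{k-1}$, and the same first-moment argument over uniformly random orders. The only cosmetic differences are these: you pick an order whose defect is at most its expectation and then apply \cref{lem:deterministic-pushing-estimate}, whereas the paper averages $\norm{\morsepush^{\descChoice_{\precOrd}}_{m,k}t}$ and picks an order from that bound; and your thresholds are slightly more conservative than needed. Neither difference affects correctness.
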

\begin{proof}
    Let $a_k$ and $M_{k-1} \geq 1$ be the constants obtained
    from~\cref{lem:lifted-density-contraction-bounds} with $p=k-1$.
    Since $m>a_k$, the conclusions of~\cref{lem:lifted-density-contraction-bounds} hold at heights $m$ and $m-1$.
    Put $L_k=(2k+1)M_{k-1}$, $A_k=kL_k\overlapconstant$, and $B_k=(k+1)M_{k-1}$.

    Ad~\localref{lem:probabilistic-pushing:pushing}.
    Choose $\precOrd$ uniformly from $\totalorders{\hyperplanesupport{\choicevertices{t}}}$ and let $\descChoice_{\precOrd}$ be its induced descending choice.
    By~\cref[commasep]{lem:deterministic-pushing-estimate,lem:random-order-defect-estimate}, and linearity of expectation,
    \begin{align*}
        \expect\Bigl[\norm{\morsepush^{\descChoice_{\precOrd}}_{m,k}t}\Bigr]
         &\leq \norm{t} +L_k\expect\bigl[\defectweight{t}{\descChoice_{\precOrd}}\bigr] \\
         &\leq \left( 1+kL_k \frac{\overlapconstant}{m} \right)\norm{t}
        = \left(1+\frac{A_k}{m}\right)\norm{t}.
    \end{align*}
    Since this expectation is an average over the finite set $\totalorders{\hyperplanesupport{\choicevertices{t}}}$,
    there is an order $\precOrd$ such that $\descChoice\defq\descChoice_{\precOrd}$
    satisfies~\localref{lem:probabilistic-pushing:pushing}.

    Ad~\localref{lem:probabilistic-pushing:uniform}.
    The norm estimates in~\cref{lem:cubical-boundary-splitting:norm,lem:lifted-density-contraction-bounds:norm} imply
    the required estimate.
\end{proof}

\begin{rem}%
    \label{rem:extend-descending-choice}
    By~\cref{def:one-level-pushing-operator,def:descending-choice},
    a descending choice from~\cref{lem:probabilistic-pushing}
    can be extended to a descending choice on all vertices of $\superlevel{X}{a_0}$.
\end{rem}

\subsection{Pushing fillings into a sublevel complex}

Applying the norm bounds from~\cref{lem:probabilistic-pushing} to the pushing procedure described
in~\cref{rem:pushing-construction} yields bounds on the norm
of pushed-down chains whose norms grow polynomially in the initial height.

\begin{prop}%
    \label{prop:quantitative-pushing}
    Fix $k\geq1$ and suppose that $X$ has strong uniform density of descending links and satisfies the large overlap condition.
    There exist an integer $a_k$ and constants $P_k,\delta_k>0$ with the following property.
    Let $a,H\in\bbZ$ satisfy $H\geq a\geq a_k$, and let $b\in C_k(X)$ be a chain such that $b\in C_k(\sublevel{X}{H})$
    and $\supp(\partial b)\subseteq\sublevel{X}{a}$.
    Then there are chains $u\in C_{k+1}(X)$ and $b_a\in C_k(\sublevel{X}{a})$ satisfying
    \begin{refenum}
        \item\label{prop:quantitative-pushing:boundary}
              $b_a=b-\partial u$.
        \item\label{prop:quantitative-pushing:norm}
              $\norm{b_a}\leq P_k \cdot \bigl((1+H)/(1+a)\bigr)^{\delta_k} \cdot \norm{b}$.
    \end{refenum}
\end{prop}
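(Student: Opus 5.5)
We push the filling down one level at a time, from height $H$ to $a$, following \cref{rem:pushing-construction}. At each level we choose the descending choice supplied by \cref{lem:probabilistic-pushing}, so that the component which has already been pushed grows only by a factor $1+A_k/m$.

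Let $a_k$ be the maximum of the threshold from \cref{lem:probabilistic-pushing} and $a_0+2$, and let $A_k,B_k$ be as there. Write $b=x_{\leq a}+\sum_{m=a+1}^H x_m$ with $x_m\in\heightmodule{m}{k}$. Set $t_H\defq 0$ and proceed by downward induction for $m=H,H-1,\ldots,a+1$. Suppose we already have a chain
\[
b^{(m)}=x_{\leq a}+x_{a+1}+\cdots+x_{m}+t_m\in C_k(\sublevel{X}{m})
\]
with $\partial b^{(m)}=\partial b$ and $b^{(m)}=b-\partial u_m$. Put $y_m\defq x_m+t_m$. Because $\partial b$ is supported in $\sublevel{X}{a}$ and $m>a$, the component of $\partial b^{(m)}$ in $\heightmodule{m}{k-1}$ vanishes, and this component is $\topboundary y_m$.

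We also need $\bottomboundary t_m=0$. For $m<H$ this holds because $t_m=-\bottomboundary(\cdots)$ and $(\bottomboundary)^2=0$ by \cref{lem:cubical-boundary-splitting:squares}. For $m=H$ it is trivial since $t_H=0$.

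We now choose descending choices $\descChoice_1$ on $\choicevertices{t_m}$ and $\descChoice_2$ on $\choicevertices{x_m}$. The difficulty is that $\morsepush^{\descChoice}_{m,k}$ must be applied to $y_m$ with a single choice, while the good choice is dictated by $t_m$. Since $\morsepush$ is a direct sum over the vertices $v$ of height $m$, we define $\descChoice$ to agree with $\descChoice_1$ on $\choicevertices{t_m}$ and arbitrarily elsewhere (\cref{rem:extend-descending-choice}). Then $\morsepush^{\descChoice}_{m,k}$ is linear, so $\morsepush^{\descChoice}_{m,k}y_m=\morsepush^{\descChoice}_{m,k}t_m+\morsepush^{\descChoice}_{m,k}x_m$. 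The first term is bounded by \cref{lem:probabilistic-pushing:pushing}. This requires checking that the estimate only uses $\descChoice$ on $\choicevertices{t_m}$, which is the case because both $\morsepush$ and the defect of $t_m$ only see those vertices. The second term is bounded by \cref{lem:probabilistic-pushing:uniform}.

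The remark then gives
\[
b^{(m-1)}\defq b^{(m)}-\partial\linkcontraction^{\descChoice}_{m,k}y_m
\]
with the new pushed component $t_{m-1}\defq\morsepush^{\descChoice}_{m,k}y_m$ and $u_{m-1}\defq u_m+\linkcontraction^{\descChoice}_{m,k}y_m$. Applying \cref{rem:levelwise-contraction} needs $m-1\geq a_k\geq a_{k-1}$ and $m$ above the \cref{lem:lifted-density-contraction-bounds} threshold, which the choice of $a_k$ guarantees. The resulting recursion is
\[
\norm{t_{m-1}}\leq(1+A_k/m)\norm{t_m}+B_k\norm{x_m}.
\]

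Unrolling the recursion gives
\[
\norm{t_a}\leq \sum_{m=a+1}^H B_k\norm{x_m}\prod_{j=a+1}^{m}(1+A_k/j).
\]
Using $1+A_k/j\leq e^{A_k/j}$ and $\sum_{j=a+1}^H 1/j\leq\log\bigl((1+H)/(1+a)\bigr)$, the product is at most $\bigl((1+H)/(1+a)\bigr)^{A_k}$. Hence $\norm{t_a}\leq B_k\bigl((1+H)/(1+a)\bigr)^{A_k}\norm{b}$.

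Set $b_a\defq x_{\leq a}+t_a=b^{(a)}$ and $u\defq u_a$. Then $\norm{b_a}\leq\norm{b}+\norm{t_a}$, so the statement holds with $P_k=1+B_k$ and $\delta_k=A_k$.

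The main obstacle is making the combined choice legitimate, namely that a single choice serves both $t_m$ and $x_m$. Linearity of $\morsepush$ in the chain, for fixed $\descChoice$, together with \cref{lem:probabilistic-pushing:uniform} holding for every choice, resolves it.
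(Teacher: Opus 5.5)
Your argument follows the paper's proof step for step: the same downward induction, the same checks that $\topboundary y_m=0$ and $\bottomboundary t_m=0$, and the same treatment of the single choice. You take the choice from \cref{lem:probabilistic-pushing} on $\choicevertices{t_m}$, extend it, and use linearity together with the uniform bound for $x_m$. This gives the same recursion $\norm{t_{m-1}}\le(1+A_k/m)\norm{t_m}+B_k\norm{x_m}$.

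The one wrong step is the final analytic estimate. The inequality $\sum_{j=a+1}^{H}1/j\le\log\bigl((1+H)/(1+a)\bigr)$ goes the wrong way. Comparing $1/j$ with $\int_j^{j+1}dx/x$ gives $\sum_{j=a+1}^{H}1/j\ge\log\bigl((1+H)/(1+a)\bigr)$, strictly if $H>a$; for $a=1$, $H=2$ one has $1/2>\log(3/2)$. Worse, the conclusion you draw from it is false whenever $A_k<1$. In that case $(1+1/j)^{A_k}<1+A_k/j$ for every $j\ge1$, so
\[
    \prod_{j=a+1}^{H}\Bigl(1+\frac{A_k}{j}\Bigr)
    >\prod_{j=a+1}^{H}\Bigl(1+\frac{1}{j}\Bigr)^{A_k}
    =\Bigl(\frac{1+H}{1+a}\Bigr)^{A_k}
\]
for all $H>a\geq0$. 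Since \cref{lem:probabilistic-pushing} only provides some $A_k>0$, your constants $\delta_k=A_k$ and $P_k=1+B_k$ are not justified.

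The repair is the one the paper uses. Part (i) of \cref{lem:probabilistic-pushing} stays true if $A_k$ is enlarged, so take $\delta_k\ge\max\{A_k,1\}$. Bernoulli's inequality then gives $1+A_k/j\le1+\delta_k/j\le(1+1/j)^{\delta_k}$. The product telescopes: $\prod_{j=a+1}^{r-1}(1+1/j)^{\delta_k}=\bigl(r/(a+1)\bigr)^{\delta_k}\le\bigl((1+H)/(1+a)\bigr)^{\delta_k}$ for $r\le H$.

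Your unrolled product also has one superfluous factor. The chain $x_r$ enters $t_{r-1}$ and afterwards only meets the factors with $j=a+1,\ldots,r-1$. This is harmless: including $j=r$ only changes the telescoped bound to $\bigl((r+1)/(a+1)\bigr)^{\delta_k}$, which is still at most $\bigl((1+H)/(1+a)\bigr)^{\delta_k}$.

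Alternatively, for $a\ge1$ you can keep $\delta_k=A_k$. Use the correct estimate $\sum_{j=a+1}^{H}1/j\le\log(H/a)$ together with $H/a\le2(1+H)/(1+a)$, at the cost of replacing $P_k$ by $2^{A_k}(1+B_k)$. With either fix the proof goes through.
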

\begin{proof}
    Fix a height $a_0$ above which the descending links satisfy strong uniform density and the large overlap condition.
    Let $a_k$, $A_k$, and $B_k$ be supplied by~\cref{lem:probabilistic-pushing}.
    We set $\delta_k\defq A_k$ and $P_k\defq B_k$.
    After possibly increasing these constants, we may assume that both are at least $1$.

    If $H=a$, take $u=0$ and $b_a=b$.
    We may therefore assume that $H>a$.
    The height grading gives a decomposition
    \[
        b=x_{\leq a}+\sum_{m=a+1}^{H}x_m, \qquad x_{\leq a}\in C_k(\sublevel{X}{a}), \quad x_m\in\heightmodule{m}{k}.
    \]
    We construct chains $b^{(m)} \in C_k(X)$ and $t_m \in \heightmodule{m}{k}$ for $a\leq m\leq H$, together with chains $u_m \in \heightmodule{m}{k+1}$ for $a<m\leq H$, by descending induction so that
    \begin{condrefenum}
        \item\label{prop:quantitative-pushing:stage}
              $b^{(m)}=x_{\leq a}+\sum_{r=a+1}^{m}x_r+t_m$.
        \item\label{prop:quantitative-pushing:relative}
              $b^{(m)}=b-\partial\sum_{r=m+1}^{H}u_r$.
        \item\label{prop:quantitative-pushing:pushed-cycle}
              $\bottomboundary t_m=0$.
        \item\label{prop:quantitative-pushing:pushing-norm}
              \(\norm{t_m} \leq B_k\sum_{r=m+1}^{H}\norm{x_r} \prod_{l=m+1}^{r-1}(1+A_k/l)\).
    \end{condrefenum}
    For $m=H$, we can take $b^{(H)}=b$ and $t_H=0$.
    The sums in~\localref{prop:quantitative-pushing:relative,prop:quantitative-pushing:pushing-norm} are then empty, so these choices satisfy all four conditions.

    Let $a<m\leq H$ and suppose that $b^{(m)}$ and $t_m$ have been constructed.
    Put $y_m=x_m+t_m$, which is the component of $b^{(m)}$ at height $m$ by~\localref{prop:quantitative-pushing:stage}.
    By~\localref{prop:quantitative-pushing:relative}, the boundary of $b^{(m)}$ equals $\partial b$ and is therefore supported in $\sublevel{X}{a}$.
    Since $b^{(m)}$ has no component above height $m$, the component of its boundary at height $m$ is $\topboundary y_m$.
    It follows that $\topboundary y_m=0$.

    Since $m\geq a+1>a_k$,~\localref{prop:quantitative-pushing:pushed-cycle} allows us to apply~\cref{lem:probabilistic-pushing:pushing} to $t_m$.
    Let $\descChoice_m$ be the descending choice supplied there.
    By~\cref{rem:extend-descending-choice}, we can extend
    $\descChoice_m$ to all vertices of $X$ so that $\morsepush^{\descChoice_m}_{m,k}y_m$ is defined.
    Set
    $u_m\defq\linkcontraction^{\descChoice_m}_{m,k}y_m$.
    The contraction identity in~\cref{rem:levelwise-contraction} gives
    \begin{equation}%
        \label{eq:top-boundary-u-m}
        \topboundary u_m =y_m-\linkcontraction^{\descChoice_m}_{m,k-1}\topboundary y_m =y_m.
    \end{equation}
    Define $b^{(m-1)}\defq b^{(m)}-\partial u_m$ and
    $t_{m-1}\defq\morsepush^{\descChoice_m}_{m,k}y_m=-\bottomboundary u_m$.
    Using $\partial=\topboundary+\bottomboundary$,
    the induction hypothesis~\localref{prop:quantitative-pushing:stage} for $b^{(m)}$, and~\cref{eq:top-boundary-u-m}, we obtain
    \begin{align*}
        b^{(m-1)}
        = b^{(m)}-\partial u_m
         &= x_{\leq a}+\sum_{r=a+1}^{m}x_r+t_m - y_m - \bottomboundary u_m \\
         &=x_{\leq a}+\sum_{r=a+1}^{m-1}x_r+t_{m-1}.
    \end{align*}
    Thus,~\localref{prop:quantitative-pushing:stage} holds for height $m-1$.
    The definition of $b^{(m-1)}$ and~\localref{prop:quantitative-pushing:relative} at height $m$ give~\localref{prop:quantitative-pushing:relative} at height $m-1$.
    Moreover,~\cref{lem:cubical-boundary-splitting:squares} gives
    $\bottomboundary t_{m-1} =-(\bottomboundary)^2u_m =0$,
    which proves~\localref{prop:quantitative-pushing:pushed-cycle} at height $m-1$.

    The estimates in~\cref[commasep]{lem:probabilistic-pushing:pushing,lem:probabilistic-pushing:uniform}, and~\localref{prop:quantitative-pushing:pushing-norm} at height $m$ give
    \begin{align*}
        \norm{t_{m-1}} = \norm{\morsepush^{\descChoice_m}_{m,k}y_m}
         &\leq \norm{\morsepush^{\descChoice_m}_{m,k}t_m} +\norm{\morsepush^{\descChoice_m}_{m,k}x_m} \\
         &\leq (1+A_k/m)\norm{t_m}+B_k\norm{x_m} \\
         &\leq B_k\sum_{r=m+1}^{H}\norm{x_r} \prod_{l=m}^{r-1}(1+A_k/l) +B_k\norm{x_m} \\
         &= B_k\sum_{r=m}^{H}\norm{x_r} \prod_{l=m}^{r-1}(1+A_k/l).
    \end{align*}
    Hence,~\localref{prop:quantitative-pushing:pushing-norm} holds at height $m-1$.

    Ultimately,~\localref{prop:quantitative-pushing:pushing-norm} at height $a$ yields
    \[
        \norm{t_a} \leq B_k\sum_{r=a+1}^{H}\norm{x_r} \prod_{m=a+1}^{r-1}\Bigl(1+ \frac{A_k}{m}\Bigr).
    \]
    Since $\delta_k\geq1$ and $\delta_k\geq A_k$, Bernoulli's inequality gives
    \[
        1+A_km^{-1} \leq 1+\delta_km^{-1} \leq (1+m^{-1})^{\delta_k} = \Bigl(\frac{m+1}{m}\Bigr)^{\delta_k}.
    \]
    Consequently,
    \begin{align*}
        \prod_{m=a+1}^{r-1}(1+A_km^{-1}) \leq \prod_{m=a+1}^{r-1} \Bigl(\frac{m+1}{m}\Bigr)^{\delta_k} = \Bigl(\frac{r}{a+1}\Bigr)^{\delta_k} \leq \Bigl(\frac{1+H}{1+a}\Bigr)^{\delta_k}.
    \end{align*}
    It follows that
    \[
        \norm{t_a} \leq B_k \cdot \Bigl(\frac{1+H}{1+a}\Bigr)^{\delta_k} \sum_{r=a+1}^{H}\norm{x_r}.
    \]

    Put $u\defq\sum_{m=a+1}^{H}u_m$ and $b_a\defq b^{(a)}=x_{\leq a}+t_a$.
    In particular, $b_a$ belongs to $C_k(\sublevel{X}{a})$.
    Condition~\localref{prop:quantitative-pushing:relative} at height $a$ gives $b_a=b-\partial u$, which proves~\localref{prop:quantitative-pushing:boundary}.
    Since $(1+H)/(1+a)\geq1$, the preceding estimate and the definition of $P_k$ give
    \begin{align*}
        \norm{b_a} &\leq \norm{x_{\leq a}}+\norm{t_a} \\
                   &\leq P_k \cdot \left(\frac{1+H}{1+a}\right)^{\delta_k} \cdot \Bigl( \norm{x_{\leq a}} +\sum_{r=a+1}^{H}\norm{x_r} \Bigr) \\
                   &\leq P_k \cdot \left(\frac{1+H}{1+a}\right)^{\delta_k} \cdot \norm{b}.
    \end{align*}
    This proves~\localref{prop:quantitative-pushing:norm}.
\end{proof}

The following classical result about \CAT{0} cube complexes lets us deduce polynomiality
of filling functions for sufficiently high sublevel sets $\sublevel{X}{a}$.

\begin{thm}[{\cites{Wenger2005,ABDDY13}}]%
    \label{thm:CAT0-filling}%
    Let $X$ be a \CAT{0} cube complex and let $k\geq2$.
    Then its filling
    function satisfies $\filling{X}{k}(v) \preccurlyeq v^{\frac{k}{k-1}}$.
\end{thm}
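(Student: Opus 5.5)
This is a cited classical result, so the plan is to assemble it from the two references rather than prove it from scratch. Wenger proves that every complete CAT$(0)$ space satisfies a Euclidean isoperimetric inequality for integral currents in every degree $k\geq2$. The precise statement is: a $(k-1)$\=/cycle of mass $n$ bounds a $k$\=/current of mass at most $C n^{k/(k-1)}$, where the constant $C$ depends only on $k$. ABDDY transfer such metric bounds to cellular homological filling functions of complexes built from Euclidean cells. So the proof has three steps: pass from the cellular cycle to a current, fill it metrically, then return to a cellular chain.

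\textbf{Step 1 (reduce to a finite complex).} Let $z\in C_{k-1}(X)$ be a cellular cycle with $\norm{z}\leq n$. Its support is finite. Let $Y$ be the smallest convex subcomplex containing $\supp(z)$, namely the intersection of all halfspaces containing it. Then $Y$ is a finite \CAT{0} cube complex, it is a convex subspace of $X$, and it admits a $1$\=/Lipschitz nearest-point retraction $X\to Y$. It therefore suffices to bound $\fillvolume[Y](z)$, since a filling in $Y$ is also a filling in $X$.

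\textbf{Step 2 (metric filling).} Give $Y$ the piecewise Euclidean metric with unit cubes; it is then a complete \CAT{0} space. The cycle $z$ defines an integral current $[z]$ whose mass equals $\norm{z}$, because every cube has unit volume. Wenger's isoperimetric inequality produces an integral $k$\=/current $T$ in $Y$ with $\partial T=[z]$ and mass at most $C_k n^{k/(k-1)}$, with $C_k$ independent of $Y$.

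\textbf{Step 3 (return to cellular chains).} Apply a Federer--Fleming type deformation theorem, as carried out cubically in ABDDY. This deforms $T$, relative to its already cellular boundary, into a cellular chain $b\in C_k(Y)$ with $\partial b=z$ and $\norm{b}\leq D\,(\text{mass of }T)+D\,n$. The second term accounts for the homotopy between $[z]$ and its cellular pushforward, which is trivial here because $z$ is already cellular. This gives $\filling{X}{k}(n)\preccurlyeq n^{k/(k-1)}$.

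\textbf{Main obstacle.} The constant $D$ in the deformation theorem usually depends on the ambient dimension. In our applications $X$, and even $Y$, may have cubes of unbounded dimension. To control this, I would first deform inside the finite complex $Y$ using the radial (Federer--Fleming) projection cube by cube. I would argue that projecting a $k$\=/dimensional current off cubes of dimension greater than $k$ can be averaged over centres, so that only the $k$\=/ and $(k+1)$\=/faces actually met by $T$ enter the estimate. If that fails, the fallback is to replace the metric argument by the combinatorial coning/combing of ABDDY. There one fills $z$ by cones along cubical (median) geodesics to a basepoint and then applies a divide-and-conquer decomposition of the cycle, whose constants depend only on $k$. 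Obtaining genuinely dimension-free constants is the step I expect to require the most care.
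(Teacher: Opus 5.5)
\textbf{The gap is dimension dependence in Step 3.} Your Steps 1--3 are the standard argument for $X$ of bounded dimension; this is the finite-dimensional case the paper attributes to Wenger and ABDDY. The gap is exactly the point you flag as the main obstacle, and it is the whole content of the statement as the paper needs it. $X$ is allowed to be infinite-dimensional, and the Stein--Farley complexes contain cubes of every dimension. Passing to the finite convex hull $Y$ does not bound $\dim Y$ in terms of $k$. For $k=2$, take the edge loop in $[0,1]^N$ from $(0,\ldots,0)$ that increases the coordinates $1,\ldots,N$ one at a time and then decreases them in the same order. It has norm $2N$ and contains both $(0,\ldots,0)$ and $(1,\ldots,1)$, so its convex hull is the whole $N$-cube. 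Thus $\dim Y$ can be of order $\norm{z}$. Your Step 3 then gives only $\norm{b}\leq D(\dim Y)\bigl(C_kn^{k/(k-1)}+n\bigr)$, where the constant grows with $n$, and this is not $\preccurlyeq n^{k/(k-1)}$.

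\textbf{Neither remedy closes this.} The claim that averaging over centres makes only the faces met by $T$ matter is precisely a dimension-free deformation theorem; you assert it but do not prove it. Coning along combinatorial geodesics gives bounds of order $\norm{z}\cdot\operatorname{diam}(z)$. This can be $n^2$, which exceeds $n^{k/(k-1)}$ for $k\geq3$. The divide-and-conquer refinement is Wenger's argument again, and it would itself need dimension-free cellular constants.

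\textbf{How the paper supplies the missing ingredient.} What is needed is a cellular isoperimetric inequality with a constant independent of the dimension, and the paper obtains one without currents. By Wright's theorem, the finite convex hull $K$ embeds as a subcomplex of a standard cube $[0,1]^N$. There is also a cellular chain retraction $C_\ast([0,1]^N)\to C_\ast(K)$ of norm at most $1$ up to degree $k$. The paper then fills $z$ inside $[0,1]^N$ using Dotterrer's theorem. Every cellular $(k-1)$-cycle $c$ in $[0,1]^N$ bounds a chain of norm at most $D_k\norm{c}^{k/(k-1)}$, with $D_k$ independent of $N$; Dotterrer states this over $\bbZ/2\bbZ$, but the argument adapts to $\bbZ$. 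Retracting this filling back into $K\subseteq X$ gives a filling of norm at most $D_kn^{k/(k-1)}$.

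Your averaging idea is in the spirit of Dotterrer's proof, which builds on Federer--Fleming averaging. Making it dimension-free is a theorem in its own right, though. The reduction to a standard cube is what makes that theorem available for an arbitrary \CAT{0} cube complex.
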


The precise cellular statement used here, with integral coefficients and without a bound on the dimension of $X$, does not seem to have an explicit proof in the literature.
The statement does, however, appear in~\cite{li+manin2021}*{Proposition~2.3}, so a proof
may be known to experts in the field.
The finite-dimensional case follows immediately from the work of~\cite{Wenger2005} and~\cite{ABDDY13}.
For completeness, we include a sketch of
the proof of the above result in the infinite-dimensional case.

\begin{proof}
    Let $z$ be a cellular $(k-1)$\=/cycle in $X$.
    The cubical convex hull $K$
    of the vertices in $\supp(z)$ is finite, since only finitely many
    hyperplanes separate these vertices.
    By~\cite{Wright2012}*{Theorem~3.6}, $K$ embeds into a cube $Q_N \defq [0,1]^N$
    and there is a retraction
    $r_\ast\colon C_\ast(Q_N,\bbZ)\to C_\ast(K,\bbZ)$ up to degree $k$
    such that $\norm{r_\ast} \leq 1$.

    In~\cite{Dotterrer2015}*{Theorem~3.2}, Dotterrer proves
    that there exists a constant $D_k$, independent of $N$,
    such that every $(k-1)$\=/cycle $c$ in $C_{k-1}(Q_N,\bbZ/2\bbZ)$ admits
    a filling $b\in C_k(Q_N,\bbZ/2\bbZ)$ with norm at most
    $D_k\norm{c}^{k/(k-1)}$.
    Dotterrer's argument applies almost verbatim with $\bbZ$ coefficients rather than
    $\bbZ/2\bbZ$ coefficients and hence produces an integral filling $b$ of $z$ with
    $\norm{b}\leq D_k\norm{z}^{k/(k-1)}$.
    Applying the chain retraction $r_\ast$ to $b$ gives the required filling in $K\subseteq X$.
\end{proof}

\begin{thm}[store=main-sublevel-fillings]%
    \label{cor:CAT0-ambient-fillings}
    Let $X$ be a \CAT{0} cube complex equipped with a discrete Morse function $h \colon X \to \bbR$,
    and suppose that $X$ has strong uniform density of descending links and satisfies the large overlap condition.
    For every $k\geq2$, there is an integer $a_k$ such that
    $\filling{\sublevel{X}{a}}{k}$ is polynomial for every $a\geq a_k$.
\end{thm}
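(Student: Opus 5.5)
We fill the cycle in the ambient complex, using \cref{thm:CAT0-filling}, and then push the filling into $\sublevel{X}{a}$ with \cref{prop:quantitative-pushing}. The only new input needed is a bound on the height of the ambient filling that is linear in the norm of the cycle. Fix $k\geq2$ and let $a_k$ be the integer supplied by \cref{prop:quantitative-pushing} in degree $k$. Enlarging it if necessary, we take $a_k$ at least the threshold $m_{k-2}$ from \cref{rem:descending-links-acyclic}. Then every descending link above height $a_k$ is $(k-2)$-acyclic, so by the standard Bestvina--Brady Morse lemma $\sublevel{X}{a}$ is $(k-1)$-acyclic for $a\geq a_k$. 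Hence $\filling{\sublevel{X}{a}}{k}$ is defined. Alternatively, acyclicity follows directly from the construction below, because every cycle in $\sublevel{X}{a}$ bounds in the contractible space $X$ and its filling can be pushed down.

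Let $z\in C_{k-1}(\sublevel{X}{a})$ be a cycle with $\norm{z}\leq n$. By \cref{thm:CAT0-filling} there is $b\in C_k(X)$ with $\partial b=z$ and $\norm{b}\leq D(1+n)^{k/(k-1)}$. The key point is to control the height of $b$. I would take $b$ supported in the cubical convex hull $K$ of $\vertices{z}$, as in the sketch of proof of \cref{thm:CAT0-filling}. The retraction $r_\ast$ there lands in $K$, so we may assume this.

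Next we bound the heights of vertices of $K$. Every vertex of $K$ lies on a combinatorial geodesic between two vertices of $\vertices{z}$. This holds in the convex hull of a finite vertex set of a \CAT{0} cube complex, since it is the intersection of halfspaces and median closure suffices. The remaining issue is that $z$ may be disconnected, so I would state the bound more carefully. Since $h$ changes by one along each edge, the height of a vertex in $K$ is at most $\max h|_{\vertices{z}}$ plus the $\ell^1$-distance to $\vertices{z}$. Median-hull iterates, or alternatively the halfspace description, bound that distance by the number of hyperplanes crossing $K$. Every hyperplane crossing $K$ separates two vertices of $\vertices{z}$. A cleaner statement is that a vertex $x$ of $K$ has $h(x)\leq \max_{\vertices{z}}h + \#\{\text{hyperplanes separating points of } \vertices{z}\}$. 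The hyperplane count is bounded by the sum of distances within each component of the support of $z$, together with distances between components. Distances between components are not bounded by $n$ a priori.

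To deal with this, I would first reduce to the case where $z$ is connected in the $1$-skeleton. Split $z$ into cycles supported on connected components of $\supp(z)$; boundaries of distinct components share no cells, so each piece is again a cycle. Fill each piece separately and use superadditivity of $v\mapsto v^{k/(k-1)}$. For a connected piece, the diameter of its vertex set is at most $k\norm{z}$, hence at most $kn$. Any hyperplane separating two of its vertices is crossed by a path in the support, so at most $kn$ hyperplanes cross $K$. This gives $b\in C_k(\sublevel{X}{H})$ with $H\leq a+kn$.

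Finally, apply \cref{prop:quantitative-pushing} to $b$, which is legitimate since $\supp(\partial b)\subseteq\sublevel{X}{a}$. It yields $b_a\in C_k(\sublevel{X}{a})$ with $\partial b_a=\partial b=z$, because $\partial\partial u=0$. Its norm satisfies $\norm{b_a}\leq P_k(1+a+kn)^{\delta_k}D(1+n)^{k/(k-1)}$, which is polynomial in $n$ for fixed $a$.

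I expect the height bound for ambient fillings to be the main obstacle. It requires justifying that the norm-nonincreasing retraction keeps the filling inside the hull of the support, and that the number of hyperplanes crossing this hull is linear in $\norm{z}$ for connected $z$. Everything else is a direct combination of \cref{thm:CAT0-filling,prop:quantitative-pushing}.
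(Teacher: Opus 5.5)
Your argument is correct. It has the same architecture as the paper's: reduce to cycles with connected support, fill in $X$ via \cref{thm:CAT0-filling}, then push down with \cref{prop:quantitative-pushing}. The difference is how you bound the height $H$ of the ambient filling.

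The paper uses only the \emph{statement} of \cref{thm:CAT0-filling}. It discards the components of the subcomplex spanned by $\supp(b)$ that miss $\supp(z)$, since the corresponding pieces of $b$ are cycles. It may then assume this subcomplex is connected, meets $\sublevel{X}{a}$, and consists of at most $\norm{b}$ cubes, so $H\le a+O(\norm{b})=a+O(n^{k/(k-1)})$. The bound in \cref{prop:quantitative-pushing} is polynomial in $H$, so any polynomial height bound suffices. Your claim that the only new input needed is a \emph{linear} height bound therefore overstates what is required.

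You instead open up the proof sketch of \cref{thm:CAT0-filling}: the retraction there places the filling in the convex hull $K$ of $\vertices{z}$, and counting hyperplanes crossing $K$ gives $H\le a+kn$. This relies on the construction rather than the statement. In return it gives a better exponent, $\frac{k}{k-1}+\delta_k$ instead of the paper's $\frac{k}{k-1}(1+\delta_k)$. For $k=2$ this already matches the improvement anticipated in \cref{rem:improved-height-estimate}.

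There are two slips, neither fatal.

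First, it is false that every vertex of $K$ lies on a geodesic between two points of $\vertices{z}$, and median closure does not give the hull either. The hull of $(1,0,0)$, $(0,1,0)$, $(0,0,1)$ in the standard cubulation of $\bbR^3$ is the whole unit cube, and $(1,1,1)$ lies on no geodesic between two of them. What you actually use is correct. For $x\in K$ and $s\in\vertices{z}$ we have $h(x)\le h(s)+d(x,s)$. Every hyperplane separating $x$ from $s$ separates two points of $\vertices{z}$, so for connected $z$ it is dual to an edge of one of the at most $n$ cubes of $\supp(z)$. This gives at most $(k-1)n$ such hyperplanes; a diameter bound alone would not give this count.

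Second, $(k-2)$-acyclic descending links only make $\sublevel{X}{a}$ $(k-2)$-acyclic, with $H_{k-1}(\sublevel{X}{a})\to H_{k-1}(X)$ surjective. For $(k-1)$-acyclicity you need the threshold $m_{k-1}$ from \cref{rem:descending-links-acyclic}. Alternatively, use your pushing argument with \cref{prop:quantitative-pushing} applied in every degree $1,\dots,k$.
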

\begin{proof}
    Fix $k\geq2$.
    Let $a_k$, $P_k$, and $\delta_k$ be supplied by~\cref{prop:quantitative-pushing}.
    By~\cref{thm:CAT0-filling}, there is a polynomial $V_k$ such that every cellular $(k-1)$\=/cycle $z$ in $X$ has a filling $b\in C_k(X)$ satisfying $\norm{b}\leq V_k(\norm{z})$.
    Since every polynomial is $\approx$\=/equivalent to a superadditive polynomial, it suffices to consider cycles with connected support.

    Fix $a\geq a_k$, let $z$ be a cellular $(k-1)$\=/cycle in $\sublevel{X}{a}$ with connected support, and put $n=\norm{z}$.
    Choose a filling $b$ of $z$ as above.
    Because the support of $z$ is connected, we may assume that the cubes in $\supp(b)$ form a connected subcomplex of $X$.
    Since $\size{\supp(b)}\leq\norm{b}$, every cube in $\supp(b)$ has maximal height at most
    $a+\norm{b}\leq a+V_k(n)$.

    Applying~\cref{prop:quantitative-pushing} to $b$ gives a filling $b_a$ in $\sublevel{X}{a}$ such that
    \[
        \norm{b_a} \leq P_k \cdot \left( \frac{1+a+V_k(n)}{1+a} \right)^{\delta_k} \cdot V_k(n).
    \]
    The right-hand side is a polynomial in $n$, which proves the assertion.
\end{proof}

\begin{rem}%
    \label{rem:improved-height-estimate}
    The height estimate of the filling $b$ in the above proof is not optimal.
    One could show, using cellular versions of the techniques in~\cite{Guth2006}*{Lemma~6},
    that the filling $b$ has height at most $a + O(n^{1/(k-1)})$.
    This would improve the norm bound to $\norm{b_a}\preccurlyeq n^{(k + \delta_k)/(k-1)}$.
\end{rem}

\section{Application to Higman--Thompson groups}%
\label{sec:Higman-Thompson-strong-uniform-density}

Fix integers $d\geq2$ and $r\geq1$.
Recall that $F_{d,r}$, $T_{d,r}$, and $V_{d,r}$ are the $d$\=/ary Higman--Thompson groups with $r$ roots.
For $(d,r)=(2,1)$, these are Thompson's groups $F$, $T$, and $V$, respectively.
We refer to~\cite{SkipperWu2023}*{Section~3.1} for the definition, as we do not require it here.

For $\sfA\in\{F,T,V\}$, let $\FScomplex{\sfA}{d}{r}$ be the Stein--Farley cube complex for $\sfA_{d,r}$.
The cubical construction and the proof that this cube complex is \CAT{0} are due to Farley~\cite{Farley2005}.

We use only the following aspects of the construction, which are
recorded in Sections~4A and~4E of~\cite{Brown1987} and in
Sections~2.1, 3.4, and~5.4 of~\cite{Witzel2019}.
There is an $\sfA_{d,r}$\=/invariant rank function $\vertexrank{\placeholder}$ on the vertices of
$\FScomplex{\sfA}{d}{r}$ whose values are congruent to $r$ modulo $d-1$.
The rank extends affinely over every cube and changes by $d-1$ along each edge.
Consequently, the function
\[
    h(v)\defq\frac{\vertexrank{v}-r}{d-1}
\]
on vertices extends affinely to a discrete Morse function in the sense of~\cref{def:discrete-Morse-function}.
A vertex of height $m$ has rank
$\heightrank{m}\defq r+(d-1)m$.

By Section~4E of~\cite{Brown1987}, vertices of each fixed rank form a single orbit,
so every sublevel set is cocompact.
The action of $F_{d,r}$ is free, while
the stabilizers of a vertex of rank $n$ under $T_{d,r}$ and $V_{d,r}$ are
$\bbZ/n\bbZ$ and $S_n$, respectively.
Thus, all cell stabilizers are finite.

To verify strong uniform density and the large overlap condition,
we need to understand the descending links of $\FScomplex{\sfA}{d}{r}$.
In all cases, they are certain (ordered) matching complexes.

\begin{defn}%
    \label{def:Higman-Thompson-matching-complexes}
    Let $V$ be a set and $\calE \subseteq \powerset{V}$ a collection
    of subsets of $V$.
    Its \emph{matching complex} $\matchingcomplex{\calE}$ has the elements of $\calE$ as vertices,
    and a collection of vertices spans a simplex precisely when their corresponding subsets of $V$ are pairwise disjoint.

    If $\calE \subseteq V^d$ is a collection of $d$\=/tuples, its \emph{ordered matching complex} $\orderedmatchingcomplex{\calE}$
    is defined in the same way. We are interested in the following three examples.
    \begin{refenum}
        \item Let $P_n$ denote the \emph{path graph} on $n$
              vertices, that is, the edges are given by $\{i,i+1\}$
              for $1 \leq i < n$.
              Define $\pathhypergraph{d}{n} \subseteq \powerset{\{1,\ldots,n\}}$
              to be the collection of intervals in $P_n$ containing precisely $d$ vertices.
        \item Let $C_n$ denote the \emph{cycle graph} on $n$
              vertices, that is, two vertices $i$ and $j$ are connected by an edge if $i \pm 1 \equiv j \mod n$.
              Define $\cyclehypergraph{d}{n} \subseteq \powerset{\{1,\ldots,n\}}$
              to be the collection of intervals in $C_n$ containing precisely $d$ vertices.
        \item Let $B$ be a set.
              Define $\completehypergraph*{d}{B} \subseteq \powerset{B}$ to be the collection of all $d$-element subsets of $B$, and
              $\completeorderedhypergraph*{d}{B} \subseteq B^d$ to be the collection of all (ordered) $d$\=/tuples with pairwise distinct entries.
              If $B = \{1,\ldots,n\}$, we also denote these by $\completehypergraph{d}{n}$
              and $\completeorderedhypergraph{d}{n}$.
    \end{refenum}
\end{defn}

\begin{rem}
    A subset $\calE \subseteq \powerset{V}$ is also called a \emph{hypergraph}.
    Moreover, for $d=2$, there are natural identifications
    $\pathgraph{n} \cong \pathhypergraph{2}{n}$ and $\cyclegraph{n} \cong \cyclehypergraph{2}{n}$.
    Their matching complexes are depicted in~\cref{fig:matching-complex-line-cycle}.

    \begin{figure}
        \centering
        \pgfmathsetlengthmacro{\vertexLabelDistance}{2pt}%
\colorlet{complex face color}{black!10}%
\colorlet{chosen face color}{Yellow}%
\colorlet{choice one color}{Red}%
\colorlet{choice two color}{Blue}%
\colorlet{choice three color}{Green}%
\tikzset{%
    graph vertex/.style={circle,fill=black,inner sep=1pt},%
    graph edge/.style={thick},%
    choice one edge/.style={draw=choice one color,ultra thick},%
    choice two edge/.style={draw=choice two color,ultra thick},%
    choice three edge/.style={draw=choice three color,ultra thick},%
    complex edge/.style={graph edge},%
    complex face/.style={fill=complex face color},%
    chosen face/.style={fill=chosen face color},%
    complex vertex/.style={graph vertex},%
    choice one vertex/.style={fill=choice one color,draw=choice one color,inner sep=1.5pt},%
    choice two vertex/.style={fill=choice two color,draw=choice two color,inner sep=1.5pt},%
    choice three vertex/.style={fill=choice three color,draw=choice three color,inner sep=1.5pt},%
    complex label/.style={font=\small},%
}%
\begin{tikzpicture}[baseline=(complex center)]
    \pgfmathsetlengthmacro{\pathEdgeLength}{7.5mm}
    \pgfmathsetlengthmacro{\diagramSeparation}{34mm}
    \pgfmathsetlengthmacro{\complexRadius}{15mm}

    \coordinate (path center) at (0,0);
    \begin{scope}
        \foreach \i in {1,...,6} {
                \coordinate (p\i) at ($(path center)+({(\i-3.5)*\pathEdgeLength},0)$);
            }
        \node[left=1mm of p1] {$\pathgraph{6}$};
        \begin{scope}[graph edge]
            \foreach \i/\j in {1/2,2/3,3/4,4/5,5/6} {
                    \draw (p\i)--(p\j);
                }
        \end{scope}
        \draw[choice one edge] (p1)--(p2);
        \draw[choice two edge] (p3)--(p4);
        \draw[choice three edge] (p5)--(p6);
        \begin{scope}[every node/.style={graph vertex}]
            \foreach \i in {1,...,6} {
                    \node at (p\i) {};
                }
        \end{scope}
        \foreach \i in {1,...,6} {
                \node[below=\vertexLabelDistance] at (p\i) {$\scriptstyle\i$};
            }
    \end{scope}

    \coordinate (complex center) at ($(path center)+(0,-\diagramSeparation)$);
    \foreach \i/\a in {3/90,5/18,2/-54,4/-126,1/162} {
            \coordinate (m\i) at ($(complex center)+(\a:\complexRadius)$);
        }
    \node[below] at (complex center) {$\matchingcomplex{\pathgraph{6}}$};
    \begin{scope}
        \fill[chosen face] (m1)--(m3)--(m5)--cycle;
        \begin{scope}[complex edge]
            \foreach \i/\j in {1/3,1/5,3/5,1/4,4/2,2/5} {
                    \draw (m\i)--(m\j);
                }
        \end{scope}

        \begin{scope}[every node/.style={complex vertex}]
            \node[choice one vertex] at (m1) {};
            \node at (m2) {};
            \node[choice two vertex] at (m3) {};
            \node at (m4) {};
            \node[choice three vertex] at (m5) {};
        \end{scope}

        \begin{scope}[every node/.style={complex label}]
            \node[left=\vertexLabelDistance] at (m1) {$\{1,2\}$};
            \node[below=\vertexLabelDistance] at (m2) {$\{2,3\}$};
            \node[above=\vertexLabelDistance] at (m3) {$\{3,4\}$};
            \node[below=\vertexLabelDistance] at (m4) {$\{4,5\}$};
            \node[right=\vertexLabelDistance] at (m5) {$\{5,6\}$};
        \end{scope}
    \end{scope}
\end{tikzpicture}\quad%
\begin{tikzpicture}[baseline=(complex center)]
    \pgfmathsetlengthmacro{\cycleRadius}{10.5mm}
    \pgfmathsetlengthmacro{\cycleLabelRadius}{13.5mm}
    \pgfmathsetlengthmacro{\diagramSeparation}{34mm}
    \pgfmathsetlengthmacro{\complexHalfWidth}{20mm}
    \pgfmathsetlengthmacro{\complexHalfHeight}{10.5mm}
    \pgfmathsetlengthmacro{\complexInnerOffset}{11.5mm}

    \coordinate (cycle center) at (0,0);
    \coordinate (cycle name anchor) at ($(cycle center)+(-\cycleRadius,0)$);
    \begin{scope}
        \foreach \i/\a in {1/90,2/30,3/-30,4/-90,5/-150,6/150} {
                \coordinate (c\i) at ($(cycle center)+(\a:\cycleRadius)$);
                \coordinate (cl\i) at ($(cycle center)+(\a:\cycleLabelRadius)$);
            }
        \node[left=1mm] at (cycle name anchor) {$\cyclegraph{6}$};
        \begin{scope}[graph edge]
            \foreach \i/\j in {1/2,2/3,3/4,4/5,5/6,6/1} {
                    \draw (c\i)--(c\j);
                }
        \end{scope}
        \draw[choice one edge] (c1)--(c2);
        \draw[choice two edge] (c3)--(c4);
        \draw[choice three edge] (c5)--(c6);
        \begin{scope}[every node/.style={graph vertex}]
            \foreach \i in {1,...,6} {
                    \node at (c\i) {};
                }
        \end{scope}
        \foreach \i in {1,...,6} {
                \node at (cl\i) {$\scriptstyle\i$};
            }
    \end{scope}

    \coordinate (complex center) at ($(cycle center)+(0,-\diagramSeparation)$);
    \coordinate (n1) at ($(complex center)+(-\complexHalfWidth,0)$);
    \coordinate (n4) at ($(complex center)+(\complexHalfWidth,0)$);
    \coordinate (n3) at ($(complex center)+(-\complexInnerOffset,\complexHalfHeight)$);
    \coordinate (n5) at ($(complex center)+(-\complexInnerOffset,-\complexHalfHeight)$);
    \coordinate (n6) at ($(complex center)+(\complexInnerOffset,\complexHalfHeight)$);
    \coordinate (n2) at ($(complex center)+(\complexInnerOffset,-\complexHalfHeight)$);
    \begin{scope}
        \fill[complex face] (n2)--(n4)--(n6)--cycle;
        \fill[chosen face] (n1)--(n3)--(n5)--cycle;
        \begin{scope}[complex edge]
            \foreach \i/\j in {1/3,1/5,3/5,2/4,2/6,4/6,3/6,5/2} {
                    \draw (n\i)--(n\j);
                }
            \draw (n1) .. controls
            ($(n1)+(0,\complexHalfWidth)$) and
            ($(n4)+(0,\complexHalfWidth)$) .. (n4);
        \end{scope}

        \begin{scope}[every node/.style={complex vertex}]
            \node[choice one vertex] at (n1) {};
            \node at (n2) {};
            \node[choice two vertex] at (n3) {};
            \node at (n4) {};
            \node[choice three vertex] at (n5) {};
            \node at (n6) {};
        \end{scope}

        \begin{scope}[every node/.style={complex label}]
            \node[left=\vertexLabelDistance] (n1 label) at (n1) {$\{1,2\}$};
            \node[below=\vertexLabelDistance] at (n2) {$\{2,3\}$};
            \node[below right] at (n3) {$\{3,4\}$};
            \node[right=\vertexLabelDistance] at (n4) {$\{4,5\}$};
            \node[below=\vertexLabelDistance] at (n5) {$\{5,6\}$};
            \node[below left] at (n6) {$\{1,6\}$};
        \end{scope}
    \end{scope}
    \node[below] at (complex center) {$\matchingcomplex{\cyclegraph{6}}$};
\end{tikzpicture}

        \caption{%
            \label{fig:matching-complex-line-cycle}
            The complexes $\matchingcomplex{\pathgraph{6}}$ and $\matchingcomplex{\cyclegraph{6}}$.
            The coloured vertices span the highlighted simplices and correspond to the coloured matchings in the graphs.
        }
    \end{figure}
\end{rem}

\begin{prop}%
    \label{prop:Higman-Thompson-descending-links}
    Let $v$ be a vertex of rank $n \geq d$.
    Put $B_n\defq\{1,\ldots,n\}$.
    The descending link of $v$ is described as follows.
    \begin{refenum}
        \item\label{prop:Higman-Thompson-descending-links:F}
              For $\FScomplex{F}{d}{r}$, one has $\lkdown(v)\cong\matchingcomplex{\pathhypergraph{d}{n}}$.
        \item\label{prop:Higman-Thompson-descending-links:T}
              For $\FScomplex{T}{d}{r}$, one has $\lkdown(v)\cong\matchingcomplex{\cyclehypergraph{d}{n}}$.
        \item\label{prop:Higman-Thompson-descending-links:V}
              For $\FScomplex{V}{d}{r}$, one has
              $\lkdown(v)\cong\orderedmatchingcomplex{\completeorderedhypergraph{d}{n}}$.
    \end{refenum}
\end{prop}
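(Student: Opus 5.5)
The plan is to read off the descending links directly from the combinatorial description of the Stein--Farley complexes recorded in Sections~4A and~4E of~\cite{Brown1987} and Sections~2.1, 3.4, and~5.4 of~\cite{Witzel2019}. A vertex $v$ of rank $n$ is represented by an element of $\sfA_{d,r}$ together with a forest with $n$ leaves, up to the usual equivalence. Equivalently, $v$ is represented by an ordered list of $n$ leaves, carrying a linear order for $F$, a cyclic order for $T$, and no order constraint for $V$. Cubes with top vertex $v$ correspond to \enquote{elementary forests} that merge disjoint collections of $d$ leaves into single leaves; each merge lowers the rank by $d-1$. Hence descending edges at $v$ are exactly the single $d$\=/fold merges admissible in the given structure. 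Recall that, by the definition of the complex, a cube is spanned by $v$ and a family of simultaneous elementary merges.

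First I will identify the vertices of $\lkdown(v)$. For $F$, an admissible merge must combine $d$ consecutive leaves in the linear order, so descending edges correspond to intervals of length $d$ in $P_n$, that is, to $\pathhypergraph{d}{n}$. For $T$, consecutiveness is taken in the cyclic order, giving $\cyclehypergraph{d}{n}$; here $n\geq d$ ensures such intervals exist, and I will note that for $n=d$ all cyclic intervals give the same subset. For $V$, any $d$ distinct leaves may be merged, and the order in which they are attached to the new caret matters because the permutation part is remembered. So descending edges correspond to injective $d$\=/tuples, that is, to $\completeorderedhypergraph{d}{n}$.

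Second, I will identify the simplices. A set of descending edges spans a cube exactly when the corresponding merges can be performed simultaneously, which happens precisely when the merged leaf sets are pairwise disjoint. This is the matching condition. Since $\lkdown(v)$ is flag and matching complexes are flag (disjointness is a pairwise condition), it suffices to check that two descending edges span a square if and only if their leaf sets are disjoint. Then I will verify that the map from $\lkdown(v)^{(0)}$ to the hypergraph is well defined and bijective, in particular independent of the representative of $v$. For $F$ there is a unique representative up to the equivalence relation, and for $T$ and $V$ I will fix one representative and check that distinct merges give distinct edges.

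The main obstacle I expect is this injectivity and well-definedness, especially in case $V$ and in degenerate small cases for $T$ such as $n=d$. In case $V$ the subtlety is that two tuples with the same underlying set but different orderings must give distinct edges, which needs the stabilizer $S_n$ to act faithfully on the leaf labels. Rather than reprove this, I will cite the explicit descending link descriptions in~\cite{Witzel2019}*{Sections~3.4 and~5.4} and~\cite{Brown1987}*{Section~4E}, and translate their notation into the hypergraph language of~\cref{def:Higman-Thompson-matching-complexes}.
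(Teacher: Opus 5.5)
Your plan is essentially the paper's proof, which likewise rests on the literature's identification of descending edges with single caret merges and of simplices with pairwise disjoint merges. The paper reduces to $r=1$ via Witzel's Theorem~3.12 and Remark~3.17, quotes Witzel's Lemmas~3.16 and~6.1 for $d=2$, and uses Skipper--Zaremsky for $V$; your direct merge argument for general $d$ and $r$ is exactly the ``same reasoning for $d>2$'' that the paper leaves to the reader. One correction on the degenerate case you flagged: for $T$ with $n=d$ the vertex really has $d$ pairwise incompatible descending edges, one for each cyclic rotation of the $d$ feet under the caret. So the bijection you plan to verify fails if $\cyclehypergraph{d}{d}$ is read literally as a set of subsets, and it holds only if intervals of $C_n$ are counted with their starting point, as the later count $N_m=r+(d-1)m$ does. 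This is harmless, since only $n>d$ is used afterwards.
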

\begin{proof}
    The categorical realization in~\cite{Witzel2019}*{Sections~2.1 and~5.4} presents the Higman--Thompson groups as fundamental groups of certain categories, where $r$ corresponds to the chosen base object in the corresponding category.
    The proof of~\cite{Witzel2019}*{Theorem~3.12}, together with~\cite{Witzel2019}*{Remark~3.17}, shows that descending links at vertices of a fixed rank can be identified independently of the choice of $r$.
    It therefore suffices to consider the case $r=1$.

    For $d=2$, the descriptions in~\localref{prop:Higman-Thompson-descending-links:F,prop:Higman-Thompson-descending-links:T} are precisely those in~\cite{Witzel2019}*{Lemmas~3.16 and~6.1}.
    It is not difficult to see that the same reasoning also works for $d > 2$.
    For~\localref{prop:Higman-Thompson-descending-links:V}, one checks that the description of $\lkdown(v)$ in~\cite{SkipperZaremsky2024}*{Example~3.2, Definition~4.1}
    admits a natural identification with $\orderedmatchingcomplex{\completeorderedhypergraph{d}{n}}$.
\end{proof}

It is well known that the connectivity of these descending links
tends to infinity as $n$ goes to infinity.
This can, for instance, be proved by using the uniform density condition~\cref{thm:Belk-Forrest-density} from Belk--Forrest (see~\cite{Witzel2019}*{Lemma~3.19}).
However, to apply our criterion for polynomial Dehn functions,
we also need to prove strong uniform density and the large overlap condition for these complexes.
In all three cases, we verify the conditions of~\cref{prop:full-descending-link-density-criterion}.

\begin{prop}%
    \label{prop:F-strong-uniform-density}
    The complex $\FScomplex{F}{d}{r}$ has strong uniform density of descending links and satisfies the large overlap condition.
\end{prop}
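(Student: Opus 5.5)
We apply \cref{prop:full-descending-link-density-criterion} with $a_0$ chosen so that $\heightrank{a_0}\geq d$. By \cref{prop:Higman-Thompson-descending-links:F}, a vertex $v$ of height $m\geq a_0$ has $\lkdown(v)\cong\matchingcomplex{\pathhypergraph{d}{n}}$ with $n=\heightrank{m}=r+(d-1)m$. We must therefore count vertices, bound the density parameter, and check the rate condition.

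The vertices of $\matchingcomplex{\pathhypergraph{d}{n}}$ are the intervals $\{i,\ldots,i+d-1\}$ with $1\leq i\leq n-d+1$. Hence every descending link at height $m$ has exactly
\[
    N_m=n-d+1=r+(d-1)(m-1)
\]
vertices, and this number depends only on $m$. Consecutive heights give $\abs{N_m-N_{m-1}}=d-1$.

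For the density parameter, two intervals are incompatible exactly when they are distinct and intersect. An interval starting at $i$ meets the interval starting at $j$ precisely when $\abs{i-j}\leq d-1$. So each vertex is incompatible with at most $2(d-1)$ others, and $\lkdown(v)$ is uniformly $b_m$\=/dense with the constant $b_m\defq 2(d-1)\geq1$. This gives \cref{prop:full-descending-link-density-criterion:density}.

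For the rate condition \cref{prop:full-descending-link-density-criterion:rate}, we have
\[
    \frac{b_m+\abs{N_m-N_{m-1}}}{N_m}=\frac{3(d-1)}{r+(d-1)(m-1)}.
\]
This is at most $\altoverlapconst/m$ for a suitable constant $\altoverlapconst$, say $\altoverlapconst=6$ once $m\geq2$. If necessary we enlarge $a_0$ so that $N_m>0$ throughout. The criterion then yields both strong uniform density and the large overlap condition.

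There is no serious obstacle here. The only care needed is the boundary bookkeeping: intervals near the ends of the path have fewer incompatible neighbours, which only helps the density bound. One must also ensure that $m>a_0$ keeps all denominators positive.
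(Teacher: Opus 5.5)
Your proposal is correct and follows the paper's proof essentially line for line: apply \cref{prop:full-descending-link-density-criterion} with $N_m=r+(d-1)(m-1)$, $b_m=2(d-1)$, $|N_m-N_{m-1}|=d-1$ and $\altoverlapconst=6$. The only cosmetic difference is that the paper fixes $a_0=2$ outright. That choice makes your requirement $m\geq 2$ for the rate inequality, and the positivity of $N_m$, automatic for every $m>a_0$.
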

\begin{proof}
    Define $\altoverlapconst \defq 6$ and let $m\geq a_0 \defq 2$.
    A vertex of height $m$ has rank $\heightrank{m}=r+(d-1)m$.
    By~\cref{prop:Higman-Thompson-descending-links:F}, the vertices of its descending link are the intervals of length $d$ in a linearly ordered set of cardinality $\heightrank{m}$, and two distinct vertices are compatible precisely when their intervals are disjoint.
    A fixed interval meets at most $d-1$ intervals beginning immediately before it and at most $d-1$ intervals ending immediately after it.
    Hence, the descending link is uniformly $b_m$\=/dense and has $N_m$ vertices, where
    \[
        N_m\defq\heightrank{m}-d+1=(d-1)(m-1)+r, \qquad b_m\defq2(d-1).
    \]
    If $m>a_0$, then $N_m\geq(d-1)m/2$ and $\abs{N_m-N_{m-1}}=d-1$.
    Therefore,
    \[
        \frac{b_m+\abs{N_m-N_{m-1}}}{N_m}
        =\frac{3(d-1)}{N_m}
        \leq\frac{\altoverlapconst}{m}.
    \]
    This verifies the conditions of~\cref{prop:full-descending-link-density-criterion}.
\end{proof}

\begin{prop}%
    \label{prop:T-strong-uniform-density}
    The complex $\FScomplex{T}{d}{r}$ has strong uniform density of descending links and satisfies the large overlap condition.
\end{prop}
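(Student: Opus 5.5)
We follow the proof of \cref{prop:F-strong-uniform-density} and verify the hypotheses of \cref{prop:full-descending-link-density-criterion}. By \cref{prop:Higman-Thompson-descending-links:T}, the descending link of a vertex of height $m$ is $\matchingcomplex{\cyclehypergraph{d}{n}}$ with $n=\heightrank{m}=r+(d-1)m$. Its vertices are the cyclic intervals of $d$ consecutive elements in the cycle $\cyclegraph{n}$, and two distinct vertices are compatible precisely when the intervals are disjoint.

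We first count vertices. For $n>d$, every starting point $i\in\{1,\ldots,n\}$ determines a distinct interval $\{i,\ldots,i+d-1\}$ taken modulo $n$, so we set $N_m\defq n=r+(d-1)m$. This number depends only on $m$, as required. Next, consider a fixed interval $I$ with starting point $i$. Another interval with starting point $j\neq i$ meets $I$ exactly when the cyclic distance from $i$ to $j$ is less than $d$. So at most $2(d-1)$ intervals meet $I$, and the link is uniformly $b_m$\=/dense with $b_m\defq 2(d-1)$.

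The only point needing care is small $n$, where wrap-around could make intervals coincide or give a different count. To avoid this we choose $a_0$ large enough that $n\geq 2d$ for every $m\geq a_0$; taking $a_0\defq 2$ gives $n\geq r+2(d-1)\geq 2d-1$. Enlarging to $a_0\defq 3$, or any fixed value, is harmless, so we use $a_0\defq 3$ to get $n\geq 2d$ comfortably. With this choice the count $N_m=n$ is exact.

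Finally we check the rate condition \cref{prop:full-descending-link-density-criterion:rate}. We have $\abs{N_m-N_{m-1}}=d-1$, hence
\[
    \frac{b_m+\abs{N_m-N_{m-1}}}{N_m}=\frac{3(d-1)}{r+(d-1)m}\leq\frac{3}{m}.
\]
So \cref{prop:full-descending-link-density-criterion:rate} holds with $\altoverlapconst\defq 3$, and \cref{prop:full-descending-link-density-criterion} gives both strong uniform density and the large overlap condition.
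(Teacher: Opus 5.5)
Your proposal is correct and follows the same route as the paper: it reruns the $F$ argument through \cref{prop:full-descending-link-density-criterion} with $N_m=r+(d-1)m$, $b_m=2(d-1)$ and $\abs{N_m-N_{m-1}}=d-1$. The differences are cosmetic. You take $a_0=3$ and get the sharper constant $\altoverlapconst=3$ from $N_m\geq(d-1)m$, whereas the paper implicitly keeps $a_0=2$ and $\altoverlapconst=6$ from the $F$ case.
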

\begin{proof}
    The proof is the same as that of~\cref{prop:F-strong-uniform-density}, except that now
    $N_m = r+(d-1)m\geq(d-1)m$, since we consider the larger set of intervals in a cycle graph rather than a path graph.
    Since $\abs{N_m-N_{m-1}}=d-1$ also holds in this case and $N_m$ appears only in the denominator of the relevant ratio, the larger value of $N_m$ only improves the inequality.
\end{proof}

\begin{prop}%
    \label{prop:V-strong-uniform-density}
    The complex $\FScomplex{V}{d}{r}$ has strong uniform density of descending links and satisfies the large overlap condition.
\end{prop}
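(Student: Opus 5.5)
The plan is to verify the hypotheses of \cref{prop:full-descending-link-density-criterion}, as in the proof of \cref{prop:F-strong-uniform-density}. Write $n=\heightrank{m}=r+(d-1)m$. By \cref{prop:Higman-Thompson-descending-links:V}, for a vertex of height $m$ with $n\geq d$, the descending link is $\orderedmatchingcomplex{\completeorderedhypergraph{d}{n}}$. Its vertices are the ordered $d$\=/tuples with pairwise distinct entries in $\{1,\ldots,n\}$. Two distinct vertices are compatible exactly when their underlying $d$\=/sets are disjoint. Hence every vertex has the same number of descending edges,
\[
    N_m=n(n-1)\cdots(n-d+1),
\]
which is positive once $n\geq d$. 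I would choose $a_0$ so large that $n\geq 2d$ whenever $m\geq a_0$, for instance $a_0\defq 2d$.

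The first step is the density bound. Fix a tuple $\tau$. Any tuple incompatible with $\tau$ and distinct from it shares an entry with $\tau$. Such a tuple is determined by three choices: a position in $\{1,\ldots,d\}$, an entry of $\tau$ placed at that position, and the remaining $d-1$ entries. This gives at most $d\cdot d\cdot n^{d-1}$ tuples. So $\lkdown(v)$ is uniformly $b_m$\=/dense with
\[
    b_m\defq d^2 n^{d-1},
\]
which is condition \cref{prop:full-descending-link-density-criterion:density}.

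The second step is to control the growth of $N_m$. Let $f(x)=x(x-1)\cdots(x-d+1)$, so that $N_m=f(n)$ and $N_{m-1}=f(n-d+1)$. On $[d,\infty)$ the derivative of $f$ is a sum of $d$ products, each of $d-1$ factors bounded by $x$, so $0\leq f'(x)\leq d x^{d-1}$. The mean value theorem then gives
\[
    \abs{N_m-N_{m-1}}\leq (d-1)\,d\,n^{d-1}.
\]
For $m>a_0$, we have $n-d+1\geq n/2$, hence $N_m\geq (n/2)^d$.

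Combining these,
\[
    \frac{b_m+\abs{N_m-N_{m-1}}}{N_m}\leq \frac{2d^2 n^{d-1}}{(n/2)^d}=\frac{2^{d+1}d^2}{n}\leq\frac{2^{d+1}d^2}{(d-1)m},
\]
using $n\geq(d-1)m$. So condition \cref{prop:full-descending-link-density-criterion:rate} holds with $\altoverlapconst\defq 2^{d+1}d^2/(d-1)$. \Cref{prop:full-descending-link-density-criterion} then yields strong uniform density and the large overlap condition above height $a_0$. The case $N_{m-1}$ at $m-1=a_0$ needs nothing extra, since $n-d+1\geq d$ there.

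The main obstacle is that $b_m$ is not bounded, unlike in the $F$ and $T$ cases. Here $b_m$ grows like $n^{d-1}$, and $\abs{N_m-N_{m-1}}$ also grows polynomially. The argument works only because both are smaller than $N_m\sim n^d$ by exactly one factor of $n$, and hence of $m$. So the counting must be kept at this order, and in particular the estimate $f'(x)\leq d x^{d-1}$ must be done carefully rather than crudely.
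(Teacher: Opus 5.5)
Your proof is correct and follows the paper's route: you verify the hypotheses of \cref{prop:full-descending-link-density-criterion} with $N_m = n(n-1)\cdots(n-d+1) = d!\binom{n}{d}$. The only difference is in the estimates. The paper uses the exact count $b_m = d!\bigl(\binom{n}{d}-\binom{n-d}{d}\bigr)-1$ and bounds both ratios via Pascal's identity, getting $\altoverlapconst = 2d^2/(d-1)$, whereas your overcount $b_m = d^2 n^{d-1}$ and mean-value bound on the falling factorial give the larger but equally sufficient constant $2^{d+1}d^2/(d-1)$.
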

\begin{proof}
    Choose an integer $a_0\geq1$ such that $\heightrank{m}\geq2d$ for every $m\geq a_0$.
    Define $\altoverlapconst\defq2d^2/(d-1)$.
    Let $m\geq a_0$ and put $n\defq\heightrank{m}$.
    By~\cref{prop:Higman-Thompson-descending-links:V}, the vertices of a descending link at height $m$ are the ordered $d$\=/tuples of distinct elements of a set of cardinality $n$, and two distinct vertices are compatible precisely when their supports are disjoint.
    In particular, the descending link has
    \[
        N_m\defq d!\binom{n}{d}
    \]
    vertices.
    A fixed vertex of the descending link is compatible with the $d!\binom{n-d}{d}$ vertices whose supports are disjoint from its support.
    It is compatible with itself as well.
    Thus, the descending link is uniformly $b_m$\=/dense, where
    \[
        b_m\defq d!\Bigl(\binom{n}{d}-\binom{n-d}{d}\Bigr)-1.
    \]
    For every $1\leq l\leq d$, summing Pascal's identity $l$ times gives
    \[
        \binom{n}{d}-\binom{n-l}{d}
        = \sum_{j=0}^{l-1} \Bigl(\binom{n-j}{d}-\binom{n-j-1}{d}\Bigr)
        = \sum_{j=0}^{l-1}\binom{n-j-1}{d-1}
        \leq l\binom{n-1}{d-1}.
    \]
    Taking $l=d$ and cancelling the factor $d!$ in the definitions of $b_m+1$ and $N_m$, we obtain
    \begin{equation}%
        \label{eq:thompson-V-a}
        \frac{b_m+1}{N_m} = \frac{\binom{n}{d}-\binom{n-d}{d}}{\binom{n}{d}} \leq \frac{d\binom{n-1}{d-1}}{\binom{n}{d}} = \frac{d^2}{n}.
    \end{equation}
    Assume now that $m>a_0$.
    Since $\heightrank{m-1}=n-d+1$, taking $l=d-1$ and again cancelling the factor $d!$ gives
    \begin{equation}%
        \label{eq:thompson-V-b}
        \frac{\abs{N_m-N_{m-1}}}{N_m} = \frac{\binom{n}{d}-\binom{n-d+1}{d}}{\binom{n}{d}} \leq \frac{(d-1)\binom{n-1}{d-1}}{\binom{n}{d}} =\frac{d(d-1)}{n}.
    \end{equation}
    Since $n=r+(d-1)m\geq(d-1)m$, combining~\cref{eq:thompson-V-a,eq:thompson-V-b} yields
    \[
        \frac{b_m+\abs{N_m-N_{m-1}}}{N_m}
        \leq \frac{d^2+d(d-1)}{n}
        = \frac{2d^2 - d}{n}
        \leq \frac{\altoverlapconst}{m}.\qedhere
    \]
\end{proof}

\begin{cor}[store=main-Higman-Thompson-fillings]%
    \label{cor:Higman-Thompson-polynomial-fillings}
    Let $d\geq2$ and $r\geq1$, and let $G$ be one of the groups
    $F_{d,r}$, $T_{d,r}$, or $V_{d,r}$.
    Then $\filling{G}{k}$ is polynomial for every $k\geq2$.
\end{cor}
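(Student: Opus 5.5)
The plan is to combine \cref{thm:main-sublevel-fillings} with the verifications in \cref{prop:F-strong-uniform-density,prop:T-strong-uniform-density,prop:V-strong-uniform-density}, and then pass from the sublevel set to the group by the standard invariance of homological Dehn functions. Fix $k\geq2$ and $\sfA\in\{F,T,V\}$. The three propositions show that $X\defq\FScomplex{\sfA}{d}{r}$, with the Morse function $h$ defined by the rank, has strong uniform density of descending links and satisfies the large overlap condition. Farley's theorem gives that $X$ is \CAT{0}. \Cref{thm:main-sublevel-fillings} therefore supplies an integer $a_k$ such that $\filling{\sublevel{X}{a}}{k}$ is polynomial for every $a\geq a_k$.

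Next I choose $a\geq a_k$ large enough that $\sublevel{X}{a}$ is $(k-1)$\=/acyclic. I would obtain this directly from the Morse theory already set up. By \cref{rem:descending-links-acyclic}, for every $p$ the descending links at heights $\geq m_p$ are $p$\=/acyclic, so taking $p=k-1$ handles all heights above some threshold. Acyclicity of the sublevel set then follows by the pushing in \cref{rem:pushing-construction}. Concretely, let $z$ be a $(j-1)$\=/cycle in $\sublevel{X}{a}$ with $j\leq k$. Fill it in the contractible complex $X$ and push the filling down level by level into $\sublevel{X}{a}$; this is possible because $a$ exceeds the thresholds of \cref{lem:lifted-density-contraction-bounds} for $p=k-1$. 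This is exactly \cref{prop:quantitative-pushing}, applied in each degree $j\leq k$. In fact the existence of the fillings produced in the proof of \cref{thm:main-sublevel-fillings} already shows that every $(k-1)$\=/cycle is a boundary, and the lower degrees are handled in the same way. So I take $a\geq\max_{2\leq j\leq k}a_j$.

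By the facts recalled from \cite{Brown1987}, $\sfA_{d,r}$ acts on $\sublevel{X}{a}$ cocompactly: there are finitely many ranks up to $\heightrank{a}$, each rank gives a single vertex orbit, and there are finitely many cubes at each vertex below height $a$. All cell stabilizers are finite. The main technical point is that the definition of $\filling{G}{k}$ asks for a free action, whereas here we have only a proper one. I would handle this with the standard fact that a $(k-1)$\=/acyclic cocompact proper $G$\=/complex computes $\filling{G}{k}$ up to $\approx$. This can be shown in either of two ways. One is to compare, via a $G$\=/equivariant chain map in both directions, with a free complex $Y$ of type $\FH_k$; such a $Y$ exists since these groups are of type $\F_\infty$. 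The other is to use the fact that filling functions are quasi-isometry invariants of uniformly $(k-1)$\=/acyclic complexes, as in \cite{alonso}, and note that $\sublevel{X}{a}$ is quasi-isometric to $G$ by Švarc--Milnor. Finite stabilizers ensure that the equivariant chain maps have uniformly bounded norm on basis cells, so polynomiality transfers. This yields that $\filling{\sfA_{d,r}}{k}$ is polynomial.

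The main obstacle I expect is this last comparison between non-free proper actions and the free-action definition. If needed, I would replace $\sublevel{X}{a}$ by the product with $EG$'s finite skeleton, or by a barycentric-type free model. I would also invoke the fact that, for finite stabilizers, rational/integral coefficient issues do not arise for $\ell^1$-norm comparisons of chain maps between locally finite cocompact complexes.
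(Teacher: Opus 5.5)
Your proposal is essentially the paper's proof. The three propositions together with \cref{thm:main-sublevel-fillings} give polynomial filling functions for high sublevel sets, and acyclicity of those sublevel sets comes from the same pushing argument. Include $j=1$ there as well, i.e.\ take $a\geq\max_{1\leq j\leq k}a_j$, so that $\widetilde{H}_0$ also vanishes. The passage to the group is the standard comparison for cocompact actions with finite stabilizers, which the paper simply cites as Lemma~1 of~\cite{young}.

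One caution about how you justify that last step. The route via $G$-equivariant chain maps in both directions with a free model $Y$ does not work integrally for $T_{d,r}$ and $V_{d,r}$. An equivariant chain map $C_*(\sublevel{X}{a})\to C_*(Y)$ must send a vertex with nontrivial stabilizer $H$ to an $H$-invariant $0$-chain, and such a chain has augmentation divisible by $\size{H}$. So the map cannot be augmentation-preserving, and it cannot be a homotopy inverse of an equivariant map $C_*(Y)\to C_*(\sublevel{X}{a})$.

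The map in that direction has to be built non-equivariantly. This is where cocompactness and finite stabilizers enter: they leave only finitely many boundary configurations up to translation. That is exactly what your quasi-isometry route, or the cited lemma, provides. With that route in place, the appeal to rational versus integral coefficients can be dropped.
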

\begin{proof}
    Fix $k\geq2$.
    By~\cref{prop:F-strong-uniform-density,prop:T-strong-uniform-density,prop:V-strong-uniform-density,cor:CAT0-ambient-fillings}, a sufficiently high sublevel set $\sublevel{X}{m}$ is $(k-1)$\=/acyclic and has polynomial integral filling functions in every degree from $2$ to $k$.

    All sublevel sets are cocompact and have finite cell stabilizers.
    Therefore,~\cite{young}*{Lemma~1} implies that $\filling{G}{k} \approx \filling{\sublevel{X}{m}}{k}$.
    Hence, $\filling{G}{k}$ is polynomial.
    Of course, this also implies that every $(k-1)$\=/acyclic sublevel set has polynomial filling functions.
\end{proof}

\section{Application to Röver--Nekrashevych groups}%
\label{sec:Rover-Nekrashevych-strong-uniform-density}

Let $d\geq2$ and let $G\leq\Aut(\rootedtree{d})$ be self-similar.
Recall that $V_d(G)$ denotes the \emph{Röver--Nekrashevych group} associated with $G$.
We refer to~\cite{SkipperZaremsky2024} for the definition, as we do not require it here.
Let $\RNcomplex{d}{G}$ be its Stein--Farley cube complex (see~\cite{SkipperZaremsky2024}*{Section~4.1}).
The vertices of $\RNcomplex{d}{G}$ carry a rank function $\vertexrank{\placeholder}$, whose values are congruent to $1$ modulo $d-1$, and we put
\[
    h(v)\defq\frac{\vertexrank{v}-1}{d-1}.
\]
A vertex of height $m$ has rank $\heightrank{m} \defq1+(d-1)m$.

The complex $\RNcomplex{d}{G}$ is contractible by~\cite{SkipperZaremsky2024}*{Proposition~4.2}.
It is shown in~\cite{Wu2025}*{Theorem~3.16} that $\RNcomplex{2}{G}$ is \CAT{0}.
The same argument extends almost verbatim to the case $d > 2$.

To verify strong uniform density and the large overlap condition, we use the following description of the descending links.
Recall that a simplicial map $\pi\colon K\to L$ is a \emph{complete join} if it is surjective, its restriction to every simplex is injective, and for every simplex $\{x_0,\ldots,x_q\}$ of $L$, the preimage subcomplex is the join
\[
    \pi^{-1}(x_0)*\cdots*\pi^{-1}(x_q).
\]
The construction in the proof of~\cite{LlosaIsenrichScheslerWu2025}*{Proposition~3.5} associates
to every vertex $v$ a finite set $B_v$ with $\size{B_v}=\vertexrank{v}$, together with a complete-join map
\[
    \pi_v\colon\lkdown(v)\longrightarrow
    \matchingcomplex{\completehypergraph*{d}{B_v}}.
\]
Note that $\matchingcomplex{\completehypergraph*{d}{B_v}}^{(0)}$
can naturally be identified with the set $\powerset[d]{B_v}$ of subsets of $B_v$ with cardinality $d$, so $\pi_v(a) \subseteq B_v$ for $a\in\lkdown(v)^{(0)}$.

These data satisfy the following compatibility condition.
If $a\in\lkdown(v)^{(0)}$ and $w=v/a$, there is an element $\ast_a\in B_w$ and a bijection
$\sigma_a\colon B_v\setminus\pi_v(a)\longrightarrow B_w\setminus\{\ast_a\}$.
For every $A\in\matchingcomplex{\completehypergraph*{d}{B_v}}^{(0)}$
with $A \cap \pi_v(a) = \emptyset$, there is a bijection
\[
    \tau_{v,a} \colon \pi_v^{-1}(A)\longrightarrow\pi_w^{-1}(\sigma_a(A)), \quad e\longmapsto e/a.
\]
\begin{rem}%
    \label{rem:relation-forest-diagrams}
    For readers familiar with forest diagrams, the preceding construction has a simple interpretation.
    The set $B_v$ is the set of feet of a diagram representing $v$.
    A vertex $a\in\lkdown(v)^{(0)}$ corresponds to a simple contraction:
    the $d$ feet in $\pi_v(a)$ are identified with the leaves of a $d$\=/ary caret.
    After the contraction, the root of this caret becomes the new foot $\ast_a$ of the diagram representing $w=v/a$.
    The bijection $\sigma_a$ identifies each unaffected foot of $v$ with the corresponding foot of $w$.
\end{rem}

We are now ready to prove strong uniform density and the large overlap condition for these complexes.
Note that, in general, the descending links in these complexes are not finite, so~\cref{prop:full-descending-link-density-criterion} does not apply.
We therefore check~\cref{def:strong-uniform-density-descending-links,def:large-overlap-descending-links} directly.

\begin{prop}%
    \label{prop:Rover-Nekrashevych-strong-uniform-density}
    The complex $\RNcomplex{d}{G}$ has strong uniform density of descending links and satisfies the large overlap condition.
\end{prop}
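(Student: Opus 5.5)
The plan is to choose the finite sets $\densityset{v}$ by picking exactly one descending edge in each fibre of the complete join $\pi_v$. The choice has to be made coherently across all vertices, so that neighbouring vertices pick edges dual to the same hyperplanes. Since $V_d(G)$ is countable and $\RNcomplex{d}{G}$ is locally countable (I expect this to follow from the construction), the set of all hyperplanes of $\RNcomplex{d}{G}$ is countable. I fix once and for all a well-order $\lhd$ on it. For a vertex $v$ and $A\in\powerset[d]{B_v}$, the fibre $\pi_v^{-1}(A)$ is non-empty by surjectivity. By \cref{lem:hyperplane-transport:injective}, its elements are dual to pairwise distinct hyperplanes. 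Let $s_v(A)$ be the unique element of $\pi_v^{-1}(A)$ whose dual hyperplane is $\lhd$-least, and put $\densityset{v}\defq\{s_v(A)\mid A\in\powerset[d]{B_v}\}$. With $n=\heightrank{m}$, this set has exactly $N_m\defq\binom{n}{d}$ elements, and $\densityhyperplanes{v}$ has the same cardinality.

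The key coherence step is as follows. Let $a\in\lkdown(v)^{(0)}$, put $w=v/a$, and take $A$ disjoint from $\pi_v(a)$. The bijection $\tau_{v,a}\colon e\mapsto e/a$ identifies $\pi_v^{-1}(A)$ with $\pi_w^{-1}(\sigma_a(A))$. By \cref{def:descending-edge-transport}, $e$ and $e/a$ are opposite edges of a square, so $\dualhyperplane{e}=\dualhyperplane{e/a}$. Hence the two fibres carry the same set of dual hyperplanes, their $\lhd$-least elements correspond, and $s_w(\sigma_a(A))=s_v(A)/a$. In particular, $\densityhyperplanes{v}\cap\densityhyperplanes{w}$ contains the hyperplanes of $s_v(A)$ for all $\binom{n-d}{d}$ sets $A$ disjoint from $\pi_v(a)$. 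These hyperplanes are pairwise distinct, since they are dual to distinct edges at $v$.

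For density, take any vertex $x\in\lkdown(v)^{(0)}$ and put $A'=\pi_v(x)$. If $A$ is disjoint from $A'$, then $\{A,A'\}$ is an edge of $\matchingcomplex{\completehypergraph*{d}{B_v}}$. Its preimage is the join $\pi_v^{-1}(A)*\pi_v^{-1}(A')$, so $x$ is compatible with $s_v(A)$. The remaining elements of $\densityset{v}$ lie over the $\binom{n}{d}-\binom{n-d}{d}$ sets $A$ that meet $A'$, and these include $A'$ itself. Therefore $\densityset{v}$ is $b_m$\=/dense with $b_m\defq\binom{n}{d}-\binom{n-d}{d}$. The Pascal-sum estimate used in the proof of \cref{prop:V-strong-uniform-density} gives $b_m/N_m\leq d^2/n\to0$, which verifies \cref{def:strong-uniform-density-descending-links}.

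For large overlap, take an edge $\{v,w\}$ with $h(v)=m$ and $w=v/a$. Then $\size{\densityhyperplanes{w}}=\binom{n-d+1}{d}$. The intersection bound gives
\[
    \frac{\size{\densityhyperplanes{v}\symdiff\densityhyperplanes{w}}}{\size{\densityhyperplanes{v}\cup\densityhyperplanes{w}}}
    \leq\frac{\binom{n}{d}+\binom{n-d+1}{d}-2\binom{n-d}{d}}{\binom{n}{d}}
    \leq\frac{2\bigl(\binom{n}{d}-\binom{n-d}{d}\bigr)}{\binom{n}{d}}\leq\frac{2d^2}{n}.
\]
Since $n\geq(d-1)m$, this is at most $\overlapconstant/m$ with $\overlapconstant=2d^2/(d-1)$.

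I expect the main obstacle to be the coherence step. A naive arbitrary section of each $\pi_v$ would destroy the overlap, so the argument depends on selecting by hyperplane through a single global order. One also has to check that $\tau_{v,a}$ really is induced by transport of squares, so that it preserves dual hyperplanes, and that the hyperplane set is countable so that a well-order exists. In fact any well-order, or even a choice function on finite sets of hyperplanes, would suffice, so countability is only a convenience.
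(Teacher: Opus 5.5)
Your proof is correct, but you obtain the coherence identity $s_w(\sigma_a(A))=s_v(A)/a$ by a different route from the paper.

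The paper introduces an auxiliary graph $\Gamma$ on pairs $(v,A)$. It chooses a base pair and a base lift in each connected component and transports this lift along paths via the bijections $\tau_{v,a}$ and their inverses. It then has to prove path-independence: every transported lift is dual to the same hyperplane as the base lift, so \cref{lem:hyperplane-transport:injective} forces two transported lifts at the same vertex to agree.

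You instead observe that $\tau_{v,a}$ is a hyperplane-preserving bijection of fibres. Hence the \emph{set} of hyperplanes dual to edges of $\pi_v^{-1}(A)$ equals that of $\pi_w^{-1}(\sigma_a(A))$. Any rule that picks an edge of a fibre by choosing a hyperplane from this set is therefore automatically coherent, and the least element for a global well-order is one such rule. This removes $\Gamma$ and the path-independence step, and it makes $\densityset{v}$ canonical once the order is fixed; it is the same least-element idea as in \cref{def:ordered-descending-choice}. The paper, by contrast, needs only one choice per component of $\Gamma$. Both arguments use the axiom of choice. Your density and overlap counts are identical to the paper's.

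Three side remarks need correcting, though none affects the proof. First, $V_d(G)$ need not be countable, since $G$, which embeds in $V_d(G)$, may itself be uncountable (e.g.\ $G=\Aut(\rootedtree{d})$). Simply invoke the well-ordering theorem instead. Second, the fibres of $\pi_v$ can be infinite, so a choice function on finite sets of hyperplanes would not suffice. You need one defined on all non-empty sets, which a well-order provides. Third, you should fix $a_0\geq1$ explicitly so that $N_m=\binom{\heightrank{m}}{d}$ is positive at every height considered.
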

\begin{proof}
    Choose an integer $a_0\geq1$ such that $\heightrank{m}\geq2d$ for every $m\geq a_0$.
    We first show how to obtain finite subsets $\densityset{v} \subseteq \lkdown(v)^{(0)}$ satisfying strong uniform density.
    Let $v$ be a vertex of height $m \geq a_0$.
    Because $\pi_v\colon\lkdown(v)^{(0)}\longrightarrow
        \powerset[d]{B_v}$ is surjective,
    there is a section $s_v \colon \powerset[d]{B_v} \to \lkdown(v)^{(0)}$.
    We define $\densityset{v} \defq s_v(\powerset[d]{B_v})$.
    Thus, $\densityset{v}$ contains exactly one element from each fibre of $\pi_v$.

    Now consider $x\in\lkdown(v)^{(0)}$.
    Let $A \in \powerset[d]{B_v}$ be such that $\pi_v(x) \cap A = \emptyset$, that is,
    $\pi_v(x)$ and $A$ are connected by an edge in $\matchingcomplex{\completehypergraph*{d}{B_v}}$.
    Because $\pi_v$ is a complete join map, $x$ is compatible with every vertex in $\pi_v^{-1}(A)$.
    In particular, $x$ is compatible with $s_v(A) \in \densityset{v}$.
    There are $\binom{\heightrank{m}-d}{d}$ such sets $A \in \powerset[d]{B_v}$,
    so $\densityset{v}$ is $b_m$\=/dense in $\lkdown(v)$, where
    \[
        b_m\defq N_m -\binom{\heightrank{m}-d}{d},
        \qquad
        N_m\defq\binom{\heightrank{m}}{d}.
    \]
    The calculation in~\cref{eq:thompson-V-a}, with the common factor $d!$ omitted, gives
    \[
        \frac{b_m}{N_m}
        =\frac{\binom{\heightrank{m}}{d}-\binom{\heightrank{m}-d}{d}}{\binom{\heightrank{m}}{d}}
        \leq\frac{d^2}{\heightrank{m}}
        \longrightarrow0.
    \]
    Thus, any choice of sections $s_v$ yields strong uniform density with the above constants.

    Note that it is not clear that an arbitrary choice of sections produces sets $\densityset{v}$ satisfying the large overlap condition.
    However, a careful coherent choice does, as we shall now explain.
    To this end, we consider an auxiliary undirected graph $\Gamma$ whose vertices are the pairs $(v,A)$ with $v\in\superlevel{\RNcomplex{d}{G}}{a_0}^{(0)}$ and $A\in\powerset[d]{B_v}$.
    The edges in $\Gamma$ are of the form $\{(v,A), (v/a,\sigma_a(A))\}$
    for an element $a \in \lkdown(v)^{(0)}$ such that $A$ is disjoint from $\pi_v(a)$.

    For each connected component of $\Gamma$, choose one of its vertices $(v_0,A_0)$ and one lift $s_{v_0}(A_0) \in \pi_{v_0}^{-1}(A_0)$
    of $A_0$.
    Let $\gamma$ be a path from $(v_0,A_0)$ to some vertex $(v,A)$.
    Using the bijections $\tau_{v,a}$ or their inverses, the lift $s_{v_0}(A_0)$ can be
    transported to $v$.
    Denote the resulting lift of $A$ in $\pi_v^{-1}(A)$ by $s_v^{\gamma}(A_0)$.
    It turns out that $s_v^{\gamma}(A_0)$ does not depend on the specific path $\gamma$.
    To see this, first observe that
    if $\gamma$ is only a single edge coming from an element $a \in \lkdown(v_0)^{(0)}$, we have
    $s_v^{\gamma}(A_0) = \tau_{v_0,a}(s_{v_0}(A_0))$.
    By definition of $\tau_{v_0,a}$, the edges $s_{v_0}(A_0)$ and $s_v^\gamma(A_0)$ are opposite edges of a square,
    so $\dualhyperplane{s_{v_0}(A_0)} = \dualhyperplane{s_v^\gamma(A_0)}$.
    The same conclusion holds when the edge is traversed in the opposite direction.
    Inductively, one obtains that
    $\dualhyperplane{s_v^{\gamma}(A_0)} = \dualhyperplane{s_v^{\gamma'}(A_0)}$
    whenever $\gamma$ and $\gamma'$ are paths starting at $(v_0, A_0)$ with the same endpoint.
    By~\cref{lem:hyperplane-transport:injective}, this implies that indeed $s_v^{\gamma}(A_0) = s_v^{\gamma'}(A_0)$.

    For every vertex $(v,A)$ of $\Gamma$, denote by $(v_0,A_0)$ the chosen base vertex
    of its path component in $\Gamma$ and choose a path $\gamma_{(v,A)}$ from
    $(v_0,A_0)$ to $(v,A)$.
    For a vertex $v$ of height $m \geq a_0$, let $s_v$ be the section
    defined by $s_v(A) \defq s_v^{\gamma_{(v,A)}}(A_0)$ and define
    \begin{align*}
        \densityset{v} \defq s_v\bigl(\matchingcomplex{\completehypergraph*{d}{B_v}}^{(0)}\bigr) = \bigl\{ s_v^{\gamma_{(v,A)}}(A_0) \mid A \in \powerset[d]{B_v} \bigr\}.
    \end{align*}
    By the above discussion, this set is well-defined and does not depend on the choices of the paths $\gamma_{(v,A)}$.

    We now verify the large overlap condition for these sets.
    Let $v,w\in\superlevel{\RNcomplex{d}{G}}{a_0}^{(0)}$ with $w=v/a$, where $h(v)=m>a_0$.
    For every set $A\in\powerset[d]{B_v\setminus\pi_v(a)}$,
    we have
    \[
        s_w(\sigma_a(A))
        =\tau_{v,a}(s_v(A))
        =s_v(A)/a,
    \]
    so $\dualhyperplane{s_w(\sigma_a(A))} = \dualhyperplane{s_v(A)}$
    by~\cref{lem:hyperplane-transport:transport}.
    Consequently,
    \begin{align*}
        \size{\densityhyperplanes{v}\cap\densityhyperplanes{w}}
         &\geq
        \size{\bigl\{\dualhyperplane{s_v(A)}\bigm|
            A\in\powerset[d]{B_v\setminus\pi_v(a)}\bigr\}} \\
         &=\binom{\heightrank{m}-d}{d}
        =N_m-b_m.
    \end{align*}

    Since $\size{\densityhyperplanes{v}}=N_m$ and
    $\size{\densityhyperplanes{w}}=N_{m-1}\leq N_m$, we obtain
    \begin{align*}
        \size{\densityhyperplanes{v}\symdiff\densityhyperplanes{w}}
         &=N_m+N_{m-1}-2\size{\densityhyperplanes{v}\cap\densityhyperplanes{w}} \\
         &\leq N_{m-1}-N_m+2b_m \\
         &\leq2b_m.
    \end{align*}
    Since $\size{\densityhyperplanes{v}\cup\densityhyperplanes{w}}\geq N_m$ and
    $\heightrank{m}=1+(d-1)m\geq(d-1)m$, it follows that
    \begin{align*}
        \frac{\size{\densityhyperplanes{v}\symdiff\densityhyperplanes{w}}}
        {\size{\densityhyperplanes{v}\cup\densityhyperplanes{w}}}
         &\leq\frac{2b_m}{N_m}
        \leq\frac{2d^2}{\heightrank{m}}
        \leq\frac{2d^2}{(d-1)m}.
    \end{align*}
    This verifies the large overlap condition with $\overlapconstant\defq2d^2/(d-1)$.
\end{proof}

The $V_d(G)$\=/action on every sublevel set of $\RNcomplex{d}{G}$ is cocompact by~\cite{SkipperZaremsky2024}*{Proposition~4.2}.
If $G$ is finite, its cell stabilizers are finite, again by~\cite{SkipperZaremsky2024}*{Proposition~4.2}.
Consequently, the same argument as for~\cref{cor:Higman-Thompson-polynomial-fillings} applies.
Together with~\cref{prop:Rover-Nekrashevych-strong-uniform-density,cor:CAT0-ambient-fillings}, this gives the following result.

\begin{cor}[store=main-Rover-Nekrashevych-fillings]%
    \label{cor:Rover-Nekrashevych-polynomial-fillings}
    Let $d\geq2$ and let $G\leq\Aut(\rootedtree{d})$ be finite and self-similar.
    Then $\filling{V_d(G)}{k}$ is polynomial for every $k\geq2$.
\end{cor}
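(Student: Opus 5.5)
The corollary follows by combining \cref{prop:Rover-Nekrashevych-strong-uniform-density} with \cref{cor:CAT0-ambient-fillings}, exactly as in the proof of \cref{cor:Higman-Thompson-polynomial-fillings}. Fix $k\geq2$ and write $X=\RNcomplex{d}{G}$ with the Morse function $h(v)=(\vertexrank{v}-1)/(d-1)$. This $h$ changes by one along every edge, since the rank changes by $d-1$ along edges. The complex $X$ is \CAT{0} by the extension of~\cite{Wu2025}*{Theorem~3.16} to $d>2$. By \cref{prop:Rover-Nekrashevych-strong-uniform-density}, $X$ has strong uniform density of descending links and satisfies the large overlap condition. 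Hence \cref{cor:CAT0-ambient-fillings} supplies, for each $2\leq j\leq k$, an integer $a_j$ such that $\filling{\sublevel{X}{a}}{j}$ is polynomial for all $a\geq a_j$.

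Next we need a sublevel set that is $(k-1)$\=/acyclic, so that its filling functions up to degree $k$ are defined. By \cref{rem:descending-links-acyclic}, there is $m_{k-1}$ such that all descending links at heights $\geq m_{k-1}$ are $(k-1)$\=/acyclic. Since $X$ is contractible, standard Bestvina--Brady Morse theory shows that $\sublevel{X}{a}$ is $(k-1)$\=/acyclic once $a\geq m_{k-1}$. Alternatively, the pushing of \cref{prop:quantitative-pushing} can be used directly: any cycle in $\sublevel{X}{a}$ has a filling in $X$ by contractibility, and that filling can be pushed into $\sublevel{X}{a}$. Now fix $a\geq\max\{a_2,\ldots,a_k,m_{k-1}\}$.

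Finally, by~\cite{SkipperZaremsky2024}*{Proposition~4.2}, $V_d(G)$ acts cocompactly on $\sublevel{X}{a}$, and, since $G$ is finite, with finite cell stabilizers. Then~\cite{young}*{Lemma~1} gives $\filling{V_d(G)}{k}\approx\filling{\sublevel{X}{a}}{k}$, and polynomiality is preserved under $\approx$.

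The only substantive points are the \CAT{0} property for $d>2$, which is taken from the discussion before \cref{prop:Rover-Nekrashevych-strong-uniform-density}, and the acyclicity of high sublevel sets. I expect the acyclicity step to be the main thing to check carefully. It should follow from the uniform $p$\=/acyclicity of the descending links together with the contractibility of $X$.
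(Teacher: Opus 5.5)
Your proposal is correct and matches the paper's argument. You combine \cref{prop:Rover-Nekrashevych-strong-uniform-density} with \cref{cor:CAT0-ambient-fillings}, then use cocompactness and finite cell stabilizers from Skipper--Zaremsky's Proposition~4.2 and Young's Lemma~1, exactly as in \cref{cor:Higman-Thompson-polynomial-fillings}. Your explicit justification that high sublevel sets are $(k-1)$-acyclic, via Morse theory from \cref{rem:descending-links-acyclic} or directly via \cref{prop:quantitative-pushing}, spells out a step the paper only asserts.
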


\bibliography{references}

\end{document}